\documentclass[11pt,reqno]{amsart}

\usepackage{geometry}
\usepackage[utf8]{inputenc}
\usepackage{amstext,latexsym,amsbsy,amsmath,amssymb,amsthm,mathtools,relsize,geometry,enumerate}
\usepackage{hyperref}
\hypersetup{
	colorlinks   = true, 
	urlcolor     = blue, 
	linkcolor    = red, 
	citecolor   = red 
}
\usepackage{mathtools, enumerate,enumitem}

\usepackage{mathrsfs}

\usepackage{comment}
\usepackage{graphicx}
\usepackage{epstopdf}
\setkeys{Gin}{width=\linewidth,totalheight=\textheight,keepaspectratio}
\graphicspath{{./images/}}

\usepackage[normalem]{ulem}

\usepackage{multicol}
\usepackage{booktabs}
\usepackage{mathrsfs}
\usepackage{color}

\numberwithin{equation}{section}

\newtheorem{remark}{Remark}[section]
\newtheorem{theorem}{Theorem}[section]
\newtheorem{corollary}{Corollary}[section]
\newtheorem{lemma}{{Lemma}}[section]

\allowdisplaybreaks[4]

\newcommand{\cP}{\mathcal{P}}

\newcommand{\mP}{\mathbb{P}}

\newcommand{\mE}{\mathbb{E}}

\newcommand{\Ome}{\Omega}

\newcommand{\nab}{\nabla}

\newcommand{\vH}{{\bf H}}

\newcommand{\vv}{{\bf v}}

\newcommand{\e}{\pmb{\varepsilon}}

\newcommand{\pphi}{\pmb{\phi}}

\newcommand{\vvarphi}{\pmb{\varphi}}
\newcommand{\vsigma}{\pmb{\sigma}}
\newcommand{\vtheta}{\pmb{\theta}}

\begin{document}
	
	\title{Optimal strong convergence of a splitting Mixed finite element method for a Stochastic Keller--Segel System}

	\author{
		Liet Vo$^1$ 
	}

	\thanks{$^1$ School of Mathematical and Statistical Sciences, The University of Texas Rio Grande Valley, Edinburg, TX 78539, U.S.A.  ({\tt liet.vo@utrgv.edu}). This author was partially supported by the NSF grant DMS-2530211.}

\begin{abstract}
	In this work, we develop and analyze a splitting mixed finite element method, combined with a fully implicit Euler time discretization, for a stochastic Keller--Segel system with general polynomial logistic growth and multiplicative noise. The proposed splitting formulation allows the use of standard finite element spaces without requiring the LBB inf--sup condition associated with traditional mixed finite element methods and, at each time step, avoids the need for solving a coupled nonlinear system. We first establish the well-posedness of the variational solution and derive new high-moment, higher-order regularity, and exponential stability estimates. These estimates provide the key analytical foundation for the strong convergence analysis of the numerical method. A principal contribution of this work is the derivation of error estimates directly in full expectation, without the use of localization arguments. By combining the new stability and regularity estimates with a stochastic Gronwall framework, we overcome the difficulties arising from the interaction between the nonlinear chemotactic coupling and the multiplicative noise. As a result, we establish strong convergence in high-moment norms and obtain optimal convergence rates with respect to both the temporal and spatial discretizations. In particular, the proposed method achieves the optimal rates of order $k^{\frac{1}{2}}$ in time and $h$ in space in the relevant $L^2$- and $H^1$-based error norms. These results provide, to our knowledge, the first optimal strong convergence analysis in full expectation for a splitting mixed finite element approximation of this class of stochastic Keller--Segel systems.
\end{abstract}
	\maketitle

	{\bf Key words.} Stochastic partial differential equations, multiplicative noise, Wiener process, It\^o stochastic integral, Euler scheme, finite element method, error estimates, stochastic Keller-Segel.
	
	\medskip
	
	{\bf AMS subject classifications.} 65N12, 
	65N15, 
	65N30. 

	\section{Introduction}\label{sec-1}
	
	We consider a stochastic Keller--Segel model incorporating logistic growth and multiplicative noise:
	\begin{subequations}\label{eq1.1}
		\begin{align}
			du &= \left[\Delta u-\chi\nabla\cdot(u\nabla v)+g(u)\right]dt+B(u)\,dW(t)
			&& \text{a.s. in }(0,T)\times D, \label{eq1.1a}\\
			\Delta v &= v-u
			&& \text{a.s. in }(0,T)\times D, \label{eq1.1b}\\
			u(0)&=u_0
			&& \text{a.s. in }D.
		\end{align}
	\end{subequations}
	Here, $D=(0,L)^2\subset\mathbb{R}^2$ is a periodic domain and $T>0$. The variables $u$ and $v$ represent the cell density and the concentration of the chemoattractant, respectively. We impose periodic boundary conditions on both $u$ and $v$ in the spatial variables. The parameter $\chi>0$ denotes the chemotactic sensitivity, and $\{W(t)\}_{t\geq0}$ is a real-valued Wiener process. Throughout this paper, we consider the polynomial logistic reaction $g(u)=u-u^q,$
	where $q\geq3$ is an odd integer.
	
	Chemotaxis describes the directed motion of cells or microorganisms along gradients of chemical substances. It arises in a wide variety of biological processes, including bacterial aggregation, wound healing, embryonic development, immune response, angiogenesis, and tumor invasion. A classical mathematical framework for chemotaxis was introduced by Keller and Segel \cite{keller1970initiation,keller1971model}. Since then, Keller--Segel systems have become fundamental models for studying the competition between diffusion and chemically induced aggregation, with particular attention devoted to global existence, boundedness, pattern formation, and finite-time blow-up; see, for example, \cite{sherratt1994chemotaxis,arumugam2021keller,herrero1996chemotactic} and the references therein.
	
	A widely studied extension of the classical Keller--Segel model incorporates a logistic reaction term to describe population growth and crowding effects. From a biological perspective, this mechanism reflects the suppression of population growth resulting from factors such as competition for resources and limitations in nutrients or space. From a mathematical perspective, the dissipative effect of the logistic nonlinearity counteracts the aggregation induced by chemotaxis and may therefore suppress the formation of singularities. Consequently, Keller--Segel models with logistic growth involve a delicate balance among diffusion, chemotactic attraction, population growth, and nonlinear damping. Considerable attention has been devoted to the qualitative analysis of such systems, including global existence, boundedness, and long-time behavior; see, for example, \cite{mimura1996aggregating,tello2007chemotaxis,lankeit2017generalized,cao2014boundedness} and the references therein.
	
	Stochastic effects provide another important component in the mathematical modeling of chemotaxis, as biological systems are naturally influenced by environmental variability, fluctuations in chemical signals, and randomness in individual cell motion. These effects have motivated various stochastic extensions of the Keller--Segel model, in which the uncertainty may be represented, for instance, by transport-type noise or multiplicative stochastic forcing. The mathematical analysis of stochastic chemotaxis systems has received increasing attention, with studies addressing questions such as existence and uniqueness, positivity and conservation properties, global solvability, and blow-up phenomena; see, for example, \cite{misiats2022global,huang2021microscopic,mukherjee2026uniqueness} and the references therein. Closely related to the model considered in this paper, Chen et al.~\cite{chen2025well} investigated a stochastic Keller--Segel system with logistic growth subject to multiplicative noise and proved the global well-posedness of its mild solution.

Compared with the substantial analytical literature, the rigorous numerical analysis of stochastic Keller--Segel systems remains relatively limited. A splitting mixed finite element method for a stochastic Keller--Segel equation driven by transport noise in the Stratonovich sense was analyzed in \cite{vo2025analysis}. More recently, a splitting mixed finite element approximation for the stochastic Keller--Segel system with logistic growth and multiplicative noise was studied in \cite{thieu2026splitting}. In that work, stability and error estimates were derived by means of a localization argument \cite{carelli2012rates}, leading to convergence in probability of the fully discrete approximation. However, optimal strong error estimates in full expectation were not obtained. One of the main difficulties arises from the interaction between the nonlinear and non-monotone chemotactic coupling and the stochastic forcing, which makes it challenging to control the nonlinear error terms globally in expectation. In particular, polynomial stability estimates alone are generally insufficient to close the strong error analysis without introducing suitable localization events. Overcoming this difficulty requires stronger stability, integrability, and regularity properties of both the exact and numerical solutions. This distinction is particularly important in stochastic numerical analysis, since convergence in probability does not by itself provide quantitative moment bounds for the numerical error or strong rates of convergence.

Therefore, the primary purpose of the present paper is to develop a rigorous strong convergence theory for a fully discrete splitting mixed finite element approximation of \eqref{eq1.1}. The analysis is complicated as mentioned above by the simultaneous presence of the nonlinear chemotactic flux $\nabla\cdot(u\nabla v),$ the polynomial reaction $u-u^q$, and the multiplicative stochastic forcing. To overcome the challenge, our approach is based on three closely connected ingredients: high-moment and exponetial estimates of the variational solution, a stable splitting mixed finite element formulation with high-moment and discrete exponential stability estimates, and a stochastic Gronwall framework to control the nonlinear random terms directly in expectation.

We first establish the well-posedness of the variational solution and develop the high-moment estimates required for the numerical analysis. For the spatial approximation, we introduce the auxiliary variable $\vsigma=\nab v$
and employ a splitting mixed formulation that treats the elliptic and parabolic components successively at each time level. The principal numerical result is then obtained by combining the resulting stability estimates with a stochastic Gronwall argument, leading to optimal strong convergence rates in full expectation and in high-moment norms.

More precisely, the main contributions of this paper are summarized as follows:
\begin{enumerate}
	\item We briefly establish the well-posedness of the variational solution of the stochastic Keller--Segel system \eqref{eq1.1} with polynomial logistic growth and multiplicative noise. We first sketching the construction a martingale solution using Galerkin approximations, uniform a priori estimates, and stochastic compactness arguments, and then establish pathwise uniqueness. In addition to well-posedness, we develop a collection of stability and regularity estimates that are essential for the subsequent numerical analysis. These include high-moment estimates, and exponential stability estimate. 
	
	\item We develop and analyze a fully discrete splitting mixed finite element method based on an implicit Euler discretization in time and a mixed finite element discretization in space. By introducing $\vsigma=\nab v$, the chemical gradient appearing in the chemotactic flux is approximated directly, and the elliptic and parabolic components of the system can be treated successively. Following the splitting strategy in \cite{zhang2016characteristic,vo2025analysis,thieu2026splitting}, the resulting formulation avoids the LBB inf--sup compatibility condition required by traditional mixed finite element methods. It therefore allows standard continuous piecewise linear finite element spaces to be used for all unknown variables. Moreover, the splitting structure avoids the solution of a fully coupled nonlinear system at each time step. These features simplify the implementation and are particularly advantageous for stochastic computations requiring a large number of sample paths.
	
\item Most importantly, we establish optimal strong error estimates directly in full expectation and in high-moment norms for the fully discrete approximation. In particular, we obtain strong convergence in $L^p(\Omega;L^\infty(0,T;L^2(D)))$ for $p\ge 1$, and in $L^2(\Omega;L^2(0,T;H^1(D)))$, with the optimal convergence rate $O(k^{1/2}+h)$. These theoretical rates are consistent with the numerical experiments conducted for the same splitting mixed finite element scheme in \cite[Section~4]{thieu2026splitting}, providing numerical evidence of their optimality.
\end{enumerate}

The above results provide a unified analytical and numerical framework for the stochastic Keller--Segel system considered in this paper.

	The remainder of the paper is organized as follows. In Section~\ref{sec2}, we introduce the notation and assumptions. In Section ~\ref{sec3}, we establish the well-posedness, in which its proof will be delayed until Appendix \ref{appendixB}. We also show stability, and temporal regularity properties of the variational solution. Furthermore, we derive the mixed variational formulation used for the numerical approximation. In Section~\ref{sec4}, we present the fully discrete splitting mixed finite element method and establish its stability and optimal strong error estimates. Finally, Section~\ref{sec5} contains the conclusion of the paper.
	
	\section{Prelimimaries}\label{sec2}
\subsection{Notations}

Standard notation for function spaces will be adopted throughout this paper. Since $D=(0,L)^2$ is a periodic domain, all function spaces over $D$, including $L^p(D)$, $W^{m,p}(D)$, $H^m(D)$, and their vector-valued counterparts, are understood to consist of functions satisfying periodic boundary conditions in the spatial variables whenever traces are defined. For simplicity, we suppress the subscript ``{\rm per}'' throughout the paper.

Let $(\cdot,\cdot)$ denote the standard $L^2(D)$ inner product with induced norm $\|\cdot\|_{L^2}$. Throughout the paper, $C$ denotes a generic positive constant independent of the mesh parameters $h$ and $k$, whose value may change from line to line.

Let $(\Omega,\mathcal F,\{\mathcal F_t\}_{t\geq0},\mathbb P)$ be a filtered probability space satisfying the usual conditions, where $\mathbb P$ is the probability measure, $\mathcal F$ is the $\sigma$-algebra, and $\{\mathcal F_t\}_{t\geq0}\subset\mathcal F$ is a filtration. For a random variable $v$ defined on $(\Omega,\mathcal F,\{\mathcal F_t\},\mathbb P)$, we denote its expectation by $\mathbb E[v]$.

For a Banach space $X$ with norm $\|\cdot\|_X$ and $1\leq p<\infty$, we define the Bochner space
\[
L^p(\Omega;X):=\left\{v:\Omega\to X:\mathbb E\bigl[\|v\|_X^p\bigr]<\infty\right\},
\]
equipped with the norm
\[
\|v\|_{L^p(\Omega;X)}:=\bigl(\mathbb E[\|v\|_X^p]\bigr)^{1/p}.
\]

We also make use of the Ladyzhenskaya inequality in two space dimensions,
\[
\|u\|_{L^4(D)}\leq C_L\|u\|_{L^2(D)}^{1/2}\|u\|_{H^1(D)}^{1/2},
\]
where $C_L>0$ depends only on the domain $D$.

For periodic vector fields $\vsigma\in\mathbf H^1(D)$, integration by parts and periodicity yield the identity
\begin{align}
	\|\nab\vsigma\|_{L^2}^2
	=
	\|\nab\cdot\vsigma\|_{L^2}^2
	+
	\|\operatorname{rot}\vsigma\|_{L^2}^2.
\end{align}
Consequently, we shall use the equivalent representation of the $H^1$-norm
\begin{align}
	\|\vsigma\|_{H^1}^2
	=
	\|\vsigma\|_{L^2}^2
	+
	\|\nab\cdot\vsigma\|_{L^2}^2
	+
	\|\operatorname{rot}\vsigma\|_{L^2}^2,
	\qquad
	\forall\,\vsigma\in\mathbf H^1(D).
\end{align}

%

By the elliptic regularity estimates for the periodic elliptic problem, for every $1<r<\infty$ and integer $m\geq0$, we have
\begin{align}
	\|w\|_{W^{2,r}(D)}&\leq C_r\|f\|_{L^r(D)}, \label{elliptic_estimate}\\
	\|w\|_{H^{m+2}(D)}&\leq C_m\|f\|_{H^m(D)}. \label{elliptic_estimate_higher}
\end{align}
Moreover, since $D\subset\mathbb R^2$, for $r>2$, the Sobolev embedding $W^{2,r}(D)\hookrightarrow C^{1,\alpha}(\overline D)$, where $\alpha=1-\frac{2}{r}$, yields
\begin{align}\label{sobolev_embedding}
	\|w\|_{L^\infty(D)}+\|\nab w\|_{L^\infty(D)}
	\leq C_r\|f\|_{L^r(D)}.
\end{align}
	\subsection{Assumptions}
	
	Concerning the stochastic forcing, in this paper, we will make the following assumptions on $B$.
	\begin{enumerate}[label=(\Alph*)]
		
		\item For $m = 0 ,1$, suppose that $B: H^{m}(D) \rightarrow H^m(D)$. Moreover, there exists a constant $C_{B}>0$ such that
		\begin{align}\label{Assump_Lipschitz}
			\|B(u) - B(v)\|_{H^m} \leq C_{B}\|u-v\|_{H^m}.
		\end{align}
		
		In addition, there exists a constant $c_B>0$ such that
		\begin{align}\label{Assump_LinearGrowth}
			\|B(u)\|_{H^m} \leq c_B (1 + \|u\|_{H^m}).
		\end{align}
		\item There exists a constant $L_0>0$ such that
		\begin{align}\label{Assump_bounded}
			\|B(u)\|_{L^2} \leq L_0.
		\end{align}
		
	\end{enumerate}

	\begin{remark}
		For simplicity of presentation, we restrict our attention to a real-valued Wiener process. The proposed splitting mixed finite element method and the corresponding error analysis can be extended to stochastic Keller--Segel systems driven by more general $Q$-Wiener or cylindrical Wiener processes through the Karhunen--Lo\`eve expansion; see, for example, \cite[Theorem~10.7]{lord2014introduction}. The extension requires only routine modifications based on the Hilbert-space formulation of stochastic integration, and the overall proof strategy remains unchanged.
	\end{remark}

	\section{PDE results}\label{sec3}
	
	We begin this subsection by presenting the variational formulation of \eqref{eq1.1} in the following theorem, in which its proof can be founded in Apprendix \ref{appendixB}.

\begin{theorem}\label{Thm_wellposedness}
	Assume that $B$ satisfies \eqref{Assump_Lipschitz} and \eqref{Assump_LinearGrowth} with $m=0$.  If $u_0\in L^2(\Omega;L^2(D))$, then there exists a unique variational solution $(u,v)$ of \eqref{eq_2.1} such that
	\begin{align}
		u&\in L^2\left(\Omega;C([0,T];L^2(D))\right)\cap L^2\left(\Omega;L^2(0,T;H^1(D))\right)\cap L^{q+1}\left(\Omega;L^{q+1}(0,T; L^{q+1}(D))\right),\\
		v&\in L^2\left(\Omega;C([0,T];H^2(D))\right).
	\end{align}
	Moreover, for every $t\in[0,T]$, the solution satisfies $\mP$-a.s.
	\begin{subequations}\label{eq_2.1}
		\begin{align}\label{eq_2.1a}
			(u(t),\phi)&=(u_0,\phi)-\int_0^t(\nab u(s),\nab\phi)\,ds+\chi\int_0^t(u(s)\nab v(s),\nab\phi)\,ds\\\nonumber
			&\qquad+\int_0^t(g(u(s)),\phi)\,ds+\int_0^t(B(u(s)),\phi)\,dW(s),\\
			&(\nab v(t),\nab\psi)+(v(t),\psi)=(u(t),\psi), \label{eq_2.1b}
		\end{align}
	\end{subequations}
	for all $\phi,\psi\in H^1(D)$.
\end{theorem}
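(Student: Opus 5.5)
The plan is the route announced in the introduction: build a martingale solution by a Faedo--Galerkin scheme, prove pathwise uniqueness, and then apply the Gyöngy--Krylov characterization of convergence in probability to upgrade to a probabilistically strong, unique variational solution.

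\textbf{Reduction and Galerkin system.} Since \eqref{eq_2.1b} is a linear periodic elliptic problem, it is solved by $v=(I-\Delta)^{-1}u$. By \eqref{elliptic_estimate_higher} with $m=0$,
\[
\|v\|_{H^2}\le C\|u\|_{L^2}.
\]
So the regularity $v\in L^2(\Omega;C([0,T];H^2(D)))$ follows once $u\in L^2(\Omega;C([0,T];L^2(D)))$, and the problem reduces to a nonlocal equation for $u$. Let $P_N$ be the $L^2$-projection onto the span of the first $N$ Fourier modes. The Galerkin system is
\[
du_N=P_N\bigl[\Delta u_N-\chi\nab\cdot(u_N\nab v_N)+g(u_N)\bigr]dt+P_NB(u_N)\,dW,\qquad v_N=(I-\Delta)^{-1}u_N .
\]
This is a finite-dimensional SDE with locally Lipschitz coefficients, so it has a unique local solution, which becomes global once the energy bound below is in hand.

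\textbf{Uniform a priori estimates.} Apply Itô's formula to $\|u_N\|_{L^2}^2$.
\begin{itemize}
\item The dissipation term gives $2\|\nab u_N\|^2$.
\item The reaction gives $2(u_N-u_N^q,u_N)=2\|u_N\|^2-2\|u_N\|_{L^{q+1}}^{q+1}$.
\item The Itô correction is bounded by $c_B^2(1+\|u_N\|^2)$ using \eqref{Assump_LinearGrowth}.
\item For the chemotactic term, integrate by parts and use $-\Delta v=u-v$:
\[
\chi(u_N\nab v_N,\nab u_N)=\tfrac{\chi}{2}(\nab u_N^2,\nab v_N)=\tfrac{\chi}{2}(u_N^2,u_N-v_N).
\]
This is bounded by $\tfrac{\chi}{2}\|u_N\|_{L^3}^3+C\|u_N\|^2\|v_N\|_{L^\infty}$. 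Since $q\ge3$, both pieces are absorbed by $\|u_N\|_{L^{q+1}}^{q+1}$ via Young's inequality, plus $C(1+\|u_N\|^2)$ (using $\|v_N\|_{L^\infty}\le C\|u_N\|$ and Young again).
\end{itemize}
Taking a supremum in time, applying the Burkholder--Davis--Gundy inequality to the martingale term and then Gronwall's inequality yields bounds uniform in $N$ in
\[
L^2(\Omega;L^\infty(0,T;L^2))\cap L^2(\Omega;L^2(0,T;H^1))\cap L^{q+1}(\Omega\times(0,T)\times D).
\]
I would also derive a fractional-in-time bound in $W^{\alpha,2}(0,T;H^{-s})$ for $s$ large, so that the drift's polynomial growth lands in a negative Sobolev space.

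\textbf{Compactness and identification of the limit.} Tightness of the laws holds in $L^2(0,T;L^2)\cap C([0,T];H^{-s})$ by Aubin--Lions--Simon. Skorokhod--Jakubowski then gives a.s.\ convergence on a new probability space. The limit is identified as follows:
\begin{itemize}
\item The nonlinear terms $u^q$ and $u\nab v$ pass to the limit using strong $L^2$ convergence combined with uniform integrability from the $L^{q+1}$ bound (Vitali).
\item The stochastic integral is handled by the standard martingale-representation argument or a convergence lemma for stochastic integrals.
\item Continuity in $L^2$ follows from the Krylov-type Itô formula in the Gelfand triple $H^1\subset L^2\subset H^{-1}$, extended by $L^{q+1}\subset L^{(q+1)'}$.
\end{itemize}

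\textbf{Pathwise uniqueness.} I expect this to be the main obstacle. Let $w=u_1-u_2$ and $z=v_1-v_2$, and apply Itô's formula to $\|w\|^2$.
\begin{itemize}
\item The reaction difference is one-sided Lipschitz: $(u_1^q-u_2^q,w)\ge0$ because $q$ is odd.
\item The noise is controlled by \eqref{Assump_Lipschitz}.
\item The chemotactic difference is $\chi(w\nab v_1+u_2\nab z,\nab w)$. The first piece is bounded by $\|\nab v_1\|_{L^4}\|w\|_{L^4}\|\nab w\|$, then by Ladyzhenskaya's inequality and Young by $\epsilon\|\nab w\|^2+C\|v_1\|_{H^2}^4\|w\|^2$. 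The second piece is bounded by $\|u_2\|_{L^4}\|\nab z\|_{L^4}\|\nab w\|\le C\|u_2\|_{L^4}\|w\|\|\nab w\|$.
\end{itemize}
This leads to
\[
d\|w\|^2\le \Lambda(t)\|w\|^2dt+dM,\qquad \Lambda=C\bigl(1+\|u_1\|^4+\|u_2\|_{L^4}^2\bigr).
\]
Here $\int_0^T\Lambda\,dt<\infty$ a.s., but $\Lambda$ has no exponential moments. I would therefore use either the stochastic Gronwall lemma or a stopping-time localization $\tau_R=\inf\{t:\int_0^t\Lambda\ge R\}$ applied to $e^{-\int_0^t\Lambda}\|w\|^2$, which gives $w=0$ on $[0,\tau_R]$. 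Letting $R\to\infty$ and using $\tau_R\to T$ a.s. yields $w\equiv0$. Gyöngy--Krylov then concludes existence of the unique strong variational solution with the stated regularity.
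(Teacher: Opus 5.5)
Your proposal is correct and follows essentially the same route as the paper's Appendix~B: a Galerkin/compactness construction of a martingale solution, then pathwise uniqueness with exactly the paper's weight $\rho(t)=\exp\bigl(-C\int_0^t(1+\|u_1\|_{L^2}^4+\|u_2\|_{L^4}^2)\,ds\bigr)$, and finally the Yamada--Watanabe/Gy\"ongy--Krylov step. Your stopping time $\tau_R$ is harmless but unnecessary, because $\rho\le 1$ bounds the weighted stochastic integrand by $2C_B\|w\|_{L^2}^2$, which makes that integral a true martingale. One small technical point in the existence part: a $W^{\alpha,2}(0,T;H^{-s})$ bound with $\alpha<\frac12$ does not by itself give tightness in $C([0,T];H^{-s})$, so for that component use the Aldous criterion, or a $W^{\alpha,p}$ bound with $\alpha p>1$ obtained after localizing on $\{\|u_0\|_{L^2}\le K\}$.
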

\begin{lemma}\label{lemma_expo_moment_H}
	Suppose that $u_0\in L^2(\Omega;L^2(D))$, and that $B$ satisfies \eqref{Assump_Lipschitz}, \eqref{Assump_LinearGrowth}, and \eqref{Assump_bounded}. Let $\gamma_1>0$ be arbitrary and assume that
	\[
	\mE\left[\exp\left(2\gamma_1\|u_0\|_{L^2}^2\right)\right]<\infty.
	\]
	Then, for every $T>0$, there exists a constant $C_{\exp,1}>0$ such that
	\begin{align}\label{ineq:expo_moment_H}
		\sup_{t\in[0,T]}
		\mE\left[
		\exp\left(
		\gamma_1\|u(t)\|_{L^2}^2
		+2\gamma_1\int_0^t\|\nab u(s)\|_{L^2}^2\,ds
		+\frac{\gamma_1}{2}\int_0^t\|u(s)\|_{L^{q+1}}^{q+1}\,ds
		\right)
		\right]
		\leq C_{\exp,1},
	\end{align}
	where
	\begin{align}
		C_{\exp,1}
		:=
		\left(
		\mE\left[
		\exp\left(
		2\gamma_1\|u_0\|_{L^2}^2
		+2C(\chi,q,L_0,\gamma_1,D)T
		\right)
		\right]
		\right)^{1/2}.
	\end{align}
\end{lemma}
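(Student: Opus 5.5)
Apply It\^o's formula to $\|u(t)\|_{L^2}^2$ (justified by Theorem~\ref{Thm_wellposedness}, first on Galerkin approximations if necessary). Testing \eqref{eq_2.1a} with $u$ gives
\begin{align*}
d\|u\|_{L^2}^2 = \Bigl[-2\|\nab u\|_{L^2}^2 + 2\chi(u\nab v,\nab u) + 2\|u\|_{L^2}^2 - 2\|u\|_{L^{q+1}}^{q+1} + \|B(u)\|_{L^2}^2\Bigr]dt + 2(B(u),u)\,dW.
\end{align*}
The chemotactic term is handled through the elliptic equation. Writing $(u\nab v,\nab u)=\frac12(\nab u^2,\nab v)=-\frac12(u^2,\Delta v)=\frac12(u^2,u-v)$ gives $\frac12\|u\|_{L^3}^3-\frac12(u^2,v)$. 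The elliptic problem with $u$ as source has a nonnegative Green's function (the resolvent of $I-\Delta$ on the torus), so $(u^2,v)$ is bounded below by $-\|u^2\|_{L^1}\|v\|_{L^\infty}$. Alternatively, one can simply bound $|(u^2,v)|\le \|u\|_{L^3}^2\|v\|_{L^3}\le C\|u\|_{L^3}^3$ by elliptic regularity. Either way, $2\chi(u\nab v,\nab u)\le C\chi\|u\|_{L^3}^3$. Since $q\ge3$, Young's inequality gives $C\chi\|u\|_{L^3}^3+2\|u\|_{L^2}^2\le \frac12\|u\|_{L^{q+1}}^{q+1}+C(\chi,q,D)$. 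Using \eqref{Assump_bounded}, $\|B(u)\|_{L^2}^2\le L_0^2$. Hence
\begin{align*}
d\|u\|_{L^2}^2 + 2\|\nab u\|_{L^2}^2dt + \tfrac32\|u\|_{L^{q+1}}^{q+1}dt \le C_1\,dt + 2(B(u),u)\,dW.
\end{align*}

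Next, set $X(t)=\gamma_1\|u(t)\|_{L^2}^2+2\gamma_1\int_0^t\|\nab u\|^2+\frac{\gamma_1}{2}\int_0^t\|u\|_{L^{q+1}}^{q+1}$. It is convenient to keep an extra dissipative term $\gamma_1\int_0^t\|u\|_{L^{q+1}}^{q+1}$ in reserve. Integrating, $X(t)\le \gamma_1\|u_0\|^2+\gamma_1C_1t-\gamma_1\int_0^t\|u\|_{L^{q+1}}^{q+1}+M(t)$, where $M(t)=2\gamma_1\int_0^t(B(u),u)\,dW$. Its quadratic variation satisfies $\langle M\rangle_t\le 4\gamma_1^2L_0^2\int_0^t\|u\|_{L^2}^2$. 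Then apply the Cauchy--Schwarz trick with the exponential martingale. Write
\begin{align*}
e^{X(t)}\le e^{\gamma_1\|u_0\|^2+\gamma_1C_1T}\,e^{M(t)-\langle M\rangle_t}\,e^{\langle M\rangle_t-\gamma_1\int_0^t\|u\|_{L^{q+1}}^{q+1}}.
\end{align*}
Square it and take expectation via Cauchy--Schwarz. $\mE[e^{2M-2\langle M\rangle}]\le1$, since it is a supermartingale (the stochastic exponential of $2M$). It then remains to bound $2\langle M\rangle_t-2\gamma_1\int_0^t\|u\|_{L^{q+1}}^{q+1}\le \int_0^t(8\gamma_1^2L_0^2\|u\|_{L^2}^2-2\gamma_1\|u\|_{L^{q+1}}^{q+1})\,ds\le C(\gamma_1,L_0,q,D)\,t$. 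This follows from Young's inequality, since $q+1>2$ and $\|u\|_{L^2}\le C\|u\|_{L^{q+1}}$. Collecting terms gives $\mE[e^{X(t)}]\le(\mE[e^{2\gamma_1\|u_0\|^2+2CT}])^{1/2}$, which is exactly $C_{\exp,1}$.

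The main obstacle is the bookkeeping. The constants in the first step must leave enough $L^{q+1}$ dissipation to absorb both the chemotactic cubic term and the quadratic variation of the noise. This is where assumption \eqref{Assump_bounded} and $q\ge3$ are essential. One must also justify the It\^o formula and the supermartingale property rigorously, for instance through stopping times $\tau_R=\inf\{t:\|u(t)\|_{L^2}\ge R\}$ and Fatou's lemma as $R\to\infty$.
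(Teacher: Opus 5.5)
Your proposal is correct and follows essentially the same route as the paper: It\^o's formula for $\|u\|_{L^2}^2$, absorbing the chemotactic, linear and It\^o-correction terms into the $L^{q+1}$ dissipation, keeping enough of it in reserve to dominate the quadratic variation pathwise via \eqref{Assump_bounded}, and then splitting off the stochastic exponential of $2M$ by Cauchy--Schwarz. The differences are cosmetic. You bound the chemotactic term through $-\Delta v=u-v$ and an $L^3$ estimate, whereas the paper uses $\|u\|_{L^4}^2\|\Delta v\|_{L^2}$. Your supermartingale and stopping-time remarks make the step $\mE[\cdot]\le 1$ slightly more rigorous than the paper's bare appeal to the exponential martingale property.
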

\begin{proof}
	To prove \eqref{ineq:expo_moment_H}, we follow the approach of \cite{nguyen2026fully,feng2024high} using the It\^o formula for $\Phi(u(t))=\|u(t)\|_{L^2}^2$, we obtain
	\begin{align}\label{eq2.4}
		\|u(t)\|_{L^2}^2+2\int_0^t\|\nab u(s)\|_{L^2}^2\,ds
		&=\|u_0\|_{L^2}^2
		+2\chi\int_0^t\bigl(u(s)\nab v(s),\nab u(s)\bigr)\,ds\\\nonumber
		&\qquad+2\int_0^t\bigl(g(u(s)),u(s)\bigr)\,ds
		+2\int_0^t(B(u(s)),u(s))\,dW(s)\\\nonumber
		&\qquad+\int_0^t\|B(u(s))\|_{L^2}^2\,ds\\\nonumber
		&\leq\|u_0\|_{L^2}^2
		+2\chi\int_0^t\bigl(u(s)\nab v(s),\nab u(s)\bigr)\,ds\\\nonumber
		&\qquad+2\int_0^t\|u(s)\|_{L^2}^2\,ds
		-2\int_0^t\|u(s)\|_{L^{q+1}}^{q+1}\,ds\\\nonumber
		&\qquad+2\int_0^t(B(u(s)),u(s))\,dW(s)+L_0^2T,
	\end{align}
	where we have used $g(u)=u-u^q$ and the boundedness assumption \eqref{Assump_bounded}.
	
	This implies that
	\begin{align}\label{eq2.5}
		&\|u(t)\|_{L^2}^2
		+2\int_0^t\|\nab u(s)\|_{L^2}^2\,ds
		+2\int_0^t\|u(s)\|_{L^{q+1}}^{q+1}\,ds\\\nonumber
		&\qquad\leq\|u_0\|_{L^2}^2
		+2\chi\int_0^t\bigl(u(s)\nab v(s),\nab u(s)\bigr)\,ds
		+2\int_0^t\|u(s)\|_{L^2}^2\,ds\\\nonumber
		&\qquad\qquad+2\int_0^t(B(u(s)),u(s))\,dW(s)+L_0^2T.
	\end{align}
	
	The second term on the right-hand side of \eqref{eq2.5} is estimated as follows. Using integration by parts, the periodic boundary conditions, the Cauchy--Schwarz inequality, and \eqref{elliptic_estimate}, we obtain
	\begin{align}\label{eq2.6}
		2\chi\int_0^t\bigl(u(s)\nab v(s),\nab u(s)\bigr)\,ds
		&=-\chi\int_0^t\bigl(u^2(s),\Delta v(s)\bigr)\,ds\\\nonumber
		&\leq\chi\int_0^t\|u(s)\|_{L^4}^2\|\Delta v(s)\|_{L^2}\,ds\\\nonumber
		&\leq C\chi\int_0^t\|u(s)\|_{L^4}^2\|u(s)\|_{L^2}\,ds\\\nonumber
		&\leq C\chi |D|^{\frac{q-2}{q+1}}
		\int_0^t\|u(s)\|_{L^{q+1}}^3\,ds\\\nonumber
		&\leq\int_0^t\|u(s)\|_{L^{q+1}}^{q+1}\,ds
		+C(\chi,q,D)T.
	\end{align}
	Here, we have used
	\[
	\|u\|_{L^4}\leq
	|D|^{\frac{q-3}{4(q+1)}}\|u\|_{L^{q+1}},
	\qquad
	\|u\|_{L^2}\leq
	|D|^{\frac{q-1}{2(q+1)}}\|u\|_{L^{q+1}},
	\]
	and Young's inequality.
	
	Combining \eqref{eq2.5} and \eqref{eq2.6}, we obtain
	\begin{align}\label{eq2.7}
		&\|u(t)\|_{L^2}^2
		+2\int_0^t\|\nab u(s)\|_{L^2}^2\,ds
		+\int_0^t\|u(s)\|_{L^{q+1}}^{q+1}\,ds\\\nonumber
		&\qquad\leq\|u_0\|_{L^2}^2
		+2\int_0^t(B(u(s)),u(s))\,dW(s)
		+2\int_0^t\|u(s)\|_{L^2}^2\,ds
		+C(\chi,q,D,L_0)T.
	\end{align}
	Using Young's inequality, we further have
	\begin{align}\label{eq2.8}
		2\int_0^t\|u(s)\|_{L^2}^2\,ds
		\leq\frac14\int_0^t\|u(s)\|_{L^{q+1}}^{q+1}\,ds
		+C(q,D)T.
	\end{align}
	Substituting \eqref{eq2.8} into \eqref{eq2.7} and multiplying the resulting inequality by $\gamma_1>0$, we obtain
	\begin{align}\label{eq2.9}
		&\gamma_1\left(
		\|u(t)\|_{L^2}^2
		+2\int_0^t\|\nab u(s)\|_{L^2}^2\,ds
		+\frac34\int_0^t\|u(s)\|_{L^{q+1}}^{q+1}\,ds
		\right)\\\nonumber
		&\qquad\leq
		\gamma_1\|u_0\|_{L^2}^2
		+Z_t-\langle Z\rangle_t+\langle Z\rangle_t
		+\gamma_1 C(\chi,q,L_0,D)T,
	\end{align}
	where
	\[
	Z_t:=2\gamma_1\int_0^t(B(u(s)),u(s))\,dW(s).
	\]
	The quadratic variation of $Z_t$ satisfies
	\begin{align}\label{eq2.10}
		\langle Z\rangle_t
		&=4\gamma_1^2\int_0^t(B(u(s)),u(s))^2\,ds\\\nonumber
		&\leq4\gamma_1^2L_0^2\int_0^t\|u(s)\|_{L^2}^2\,ds.
	\end{align}
	Since $q+1>2$, Young's inequality implies that
	\begin{align}
		4\gamma_1^2L_0^2\|u\|_{L^2}^2
		\leq\frac{\gamma_1}{4}\|u\|_{L^{q+1}}^{q+1}
		+C_{\gamma_1,q,L_0,D},
	\end{align}
	where $C_{\gamma_1,q,L_0,D}>0$ is independent of $u$. Consequently,
	\begin{align}\label{ineq:quadratic_variation}
		\langle Z\rangle_t
		\leq\frac{\gamma_1}{4}
		\int_0^t\|u(s)\|_{L^{q+1}}^{q+1}\,ds
		+C_{\gamma_1,q,L_0,D}T.
	\end{align}
	
	Substituting \eqref{ineq:quadratic_variation} into \eqref{eq2.9}, we obtain
	\begin{align}
		&\gamma_1\|u(t)\|_{L^2}^2
		+2\gamma_1\int_0^t\|\nab u(s)\|_{L^2}^2\,ds
		+\frac{\gamma_1}{2}\int_0^t\|u(s)\|_{L^{q+1}}^{q+1}\,ds\\\nonumber
		&\qquad\leq
		\gamma_1\|u_0\|_{L^2}^2
		+Z_t-\langle Z\rangle_t
		+\left[
		\gamma_1 C(\chi,q,L_0,D)
		+C_{\gamma_1,q,L_0,D}
		\right]T.
	\end{align}
	Taking the exponential function gives
	\begin{align}\label{eq2.11}
		&\exp\left(
		\gamma_1\|u(t)\|_{L^2}^2
		+2\gamma_1\int_0^t\|\nab u(s)\|_{L^2}^2\,ds
		+\frac{\gamma_1}{2}\int_0^t\|u(s)\|_{L^{q+1}}^{q+1}\,ds
		\right)\\\nonumber
		&\qquad\leq
		\exp\left(
		\gamma_1\|u_0\|_{L^2}^2
		+\left[
		\gamma_1 C(\chi,q,L_0,D)
		+C_{\gamma_1,q,L_0,D}
		\right]T
		\right)
		\exp\left(Z_t-\langle Z\rangle_t\right).
	\end{align}
	
	Taking the expectation and applying the Cauchy--Schwarz inequality to \eqref{eq2.11}, we obtain
	\begin{align}
		&\mE\left[
		\exp\left(
		\gamma_1\|u(t)\|_{L^2}^2
		+2\gamma_1\int_0^t\|\nab u(s)\|_{L^2}^2\,ds
		+\frac{\gamma_1}{2}\int_0^t\|u(s)\|_{L^{q+1}}^{q+1}\,ds
		\right)
		\right]\\\nonumber
		&\quad\leq
		\left(
		\mE\left[
		\exp\left(
		2\gamma_1\|u_0\|_{L^2}^2
		+2\left[
		\gamma_1 C(\chi,q,L_0,D)
		+C_{\gamma_1,q,L_0,D}
		\right]T
		\right)
		\right]
		\right)^{\frac12}\\\nonumber
		&\qquad\times
		\left(
		\mE\left[
		\exp\left(
		2Z_t-2\langle Z\rangle_t
		\right)
		\right]
		\right)^{\frac12}\\\nonumber
		&=
		\left(
		\mE\left[
		\exp\left(
		2\gamma_1\|u_0\|_{L^2}^2
		+2\left[
		\gamma_1 C(\chi,q,L_0,D)
		+C_{\gamma_1,q,L_0,D}
		\right]T
		\right)
		\right]
		\right)^{\frac12}\\\nonumber
		&\qquad\times
		\left(
		\mE\left[
		\exp\left(
		2Z_t-\frac12\langle2Z\rangle_t
		\right)
		\right]
		\right)^{\frac12}.
	\end{align}
	Since $\langle2Z\rangle_t=4\langle Z\rangle_t$, the process
	\[
	\exp\left(
	2Z_t-\frac12\langle2Z\rangle_t
	\right)
	\]
	is an exponential martingale. Hence,
	\[
	\mE\left[
	\exp\left(
	2Z_t-\frac12\langle2Z\rangle_t
	\right)
	\right]\leq1.
	\]
	Consequently,
	\begin{align}
		&\mE\left[
		\exp\left(
		\gamma_1\|u(t)\|_{L^2}^2
		+2\gamma_1\int_0^t\|\nab u(s)\|_{L^2}^2\,ds
		+\frac{\gamma_1}{2}\int_0^t\|u(s)\|_{L^{q+1}}^{q+1}\,ds
		\right)
		\right]\\\nonumber
		&\qquad\leq
		\left(
		\mE\left[
		\exp\left(
		2\gamma_1\|u_0\|_{L^2}^2
		+2\left[
		\gamma_1 C(\chi,q,L_0,D)
		+C_{\gamma_1,q,L_0,D}
		\right]T
		\right)
		\right]
		\right)^{\frac12}.
	\end{align}
	Therefore,
	\begin{align}
		\sup_{t\in[0,T]}
		\mE\left[
		\exp\left(
		\gamma_1\|u(t)\|_{L^2}^2
		+2\gamma_1\int_0^t\|\nab u(s)\|_{L^2}^2\,ds
		+\frac{\gamma_1}{2}\int_0^t
		\|u(s)\|_{L^{q+1}}^{q+1}\,ds
		\right)
		\right]
		\leq C_{\exp,1},
	\end{align}
	where
	\[
	C_{\exp,1}
	:=
	\left(
	\mE\left[
	\exp\left(
	2\gamma_1\|u_0\|_{L^2}^2
	+2\left[
	\gamma_1 C(\chi,q,L_0,D)
	+C_{\gamma_1,q,L_0,D}
	\right]T
	\right)
	\right]
	\right)^{\frac12}.
	\]
	This completes the proof.
\end{proof}

\begin{remark}
	The boundedness assumption \eqref{Assump_bounded} on the noise coefficient $B$ can be relaxed to the standard linear growth condition \eqref{Assump_LinearGrowth} in Lemma~\ref{lemma_expo_moment_H}. Indeed, the continuous exponential moment estimate \eqref{ineq:expo_moment_H} can still be established under the linear growth condition, although additional restrictions on the exponential parameter $\gamma_1$ may be required. In contrast, the discrete exponential stability estimate \eqref{ineq_exp_fem_stability} in Lemma~\ref{lemma_exp_fem_stability} requires stronger control of the stochastic increments.  Therefore, we impose \eqref{Assump_bounded} in the discrete analysis to establish the exponential stability needed for the strong convergence estimates.
\end{remark}

	{	\begin{lemma}\label{Stability_PDE} Let $(u,v)$ be the variational solution of \eqref{eq_2.1}. Then, let $p\ge 1$, the following estimates are satisfied:
			\begin{enumerate}
				\item[(a)] If 
				$u_0\in L^p(\Omega;L^2(D))$ and suppose that $B$ satisfies the assumptions \eqref{Assump_LinearGrowth} with $m =0$, then there exists a constant $C_{1,p}=C\bigl(p,T,\chi,D,c_B,\mathbb E[\|u_0\|_{L^2}^{2p}]\bigr)>0$
				such that the weak solution $u$ satisfies the following high-moment estimate:
				\begin{align}
					\mathbb E\Bigg[
					\sup_{0\le s\le T}\|u(s)\|_{L^2}^{2p}
					&+
					p\int_0^T
					\|u(s)\|_{L^2}^{2(p-1)}\|\nabla u(s)\|_{L^2}^{2}\,ds
					\Bigg]
					\le C_{1,p} .
				\end{align}
				\item[(b)] If 
				$u_0\in L^p(\Omega;L^2(D))$ and suppose that $B$ satisfies the assumptions \eqref{Assump_LinearGrowth} with $m =0$, then there exists a constant $C_{2,p}=C\bigl(p,T,\chi,D,c_B,\mathbb E[\|u_0\|_{L^2}^{2p}]\bigr)>0$
				such that the weak solution $u$ satisfies the following high-moment estimate:
				\begin{align}\label{eq2.20}
					\mathbb E\left[\left(\int_0^T
					\|\nabla u(s)\|_{L^2}^{2}\,ds
					\right)^p\right] + \mathbb E\left[\left(\int_0^T
					\|u(s)\|_{L^4}^{4}\,ds
					\right)^p\right]
					\le C_{2,p} .
				\end{align}

				\item[(c)] Let $u_0\in L^4(\Omega;L^4(D))$ and suppose that $B$ 
				satisfies \eqref{Assump_LinearGrowth} with $m = 0$ and $m =1$.
				Then there exists a constant $	C_{3,p}
				=
				C\left(
				T,\chi,D,c_B,
				\mathbb E\left[\|u_0\|_{L^4}^{4}\right]
				\right)>0$
				such that the weak solution $u$ satisfies the estimate
				\begin{align}\label{eq2.21}
					\mathbb E\Bigg[
					\sup_{0\le s\le T}\|u(s)\|_{L^4}^{4p}
					\Bigg]
					\le C_{3,p}.
				\end{align}
				\item[(d)] If $u_0\in L^2(\Ome;H^1(D))$, and suppose that $B$ satisfies the assumptions \eqref{Assump_LinearGrowth} with $m = 0, 1$ and $m =2$ . Then, there exists a constant $C_{4,p} = C\left(
				T,\chi,D,c_B,
				\mathbb E\left[\|u_0\|_{H^1}^{2p}\right]
				\right)>0$ such that
				\begin{align}\label{eq_2.22}
					\mE\left[\sup_{s\in [0,T]}\|\nab u(s)\|^{2p}_{L^2} + p\int_{0}^T \|\nab u(s)\|^{2p-2}_{L^2}\|\Delta u(s)\|^2_{L^2}\, ds\right] \leq C_{4,p}.
				\end{align}
				
				\item[(e)] If 
				$u_0\in L^p(\Omega;H^1(D))$ and suppose that $B$ satisfies the assumptions \eqref{Assump_LinearGrowth} with $m=0, 1$, then there exists a constant $C_{5,p}=C\bigl(p,T,\chi,D,c_B,\mathbb E[\|\nab u_0\|_{L^2}^{2p}]\bigr)>0$
				such that the weak solution $u$ satisfies the following high-moment estimate:
				\begin{align}\label{eq2.23}
					\mathbb E\left[\left(\int_0^T
					\|\Delta u(s)\|_{L^2}^{2}\,ds
					\right)^p\right]
					\le C_{5,p} .
				\end{align}
			\item[(f)] If $u_0\in L^{2p}(\Omega;H^2(D))$ and $B$ satisfies the assumption \eqref{Assump_LinearGrowth} with $m=0,1,2$, then there exists a constant
		$C_{6,p}=C\left(p,T,\chi,q,D,c_B,\mE\left[\|u_0\|_{H^2}^{2p}\right]\right)>0$
			such that the weak solution $u$ satisfies the following high-moment estimate:
			\begin{align}\label{eq23}
				\mE\left[\sup_{0\leq s\leq T}\|\Delta u(s)\|_{L^2}^{2p}+p\int_0^T\|\Delta u(s)\|_{L^2}^{2p-2}\|\nab\Delta u(s)\|_{L^2}^2\,ds\right]\leq C_{6,p}.
			\end{align}
				
			\end{enumerate}
			
		\end{lemma}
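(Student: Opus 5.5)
The plan is to prove all six estimates by the same scheme. Apply It\^o's formula to a power $\Phi(\cdot)^p$ of a suitable functional, namely $\|u\|_{L^2}^2$, $\|u\|_{L^4}^4$, $\|\nabla u\|_{L^2}^2$ and $\|\Delta u\|_{L^2}^2$. Absorb the chemotactic term using the dissipation coming from the Laplacian and the logistic damping. Bound the It\^o correction by \eqref{Assump_LinearGrowth}. Control the martingale term with the Burkholder--Davis--Gundy inequality, followed by Young's inequality and Gronwall's lemma. The computations are rigorously justified at the Galerkin level, or on stopping times $\tau_R=\inf\{t:\|u(t)\|_{\ast}>R\}$, and the bounds pass to the limit $R\to\infty$ by Fatou's lemma.

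\emph{Parts (a) and (b).} Start from the energy identity \eqref{eq2.4}. Bound the chemotactic term exactly as in \eqref{eq2.6}, but without using \eqref{Assump_bounded}: write
\[
2\chi(u\nabla v,\nabla u)=-\chi(u^2,\Delta v)\le C\|u\|_{L^4}^2\|u\|_{L^2}\le C\|u\|_{L^{q+1}}^3 ,
\]
which is absorbed by $\|u\|_{L^{q+1}}^{q+1}$ plus a constant. Then apply It\^o's formula to $x\mapsto x^p$ with $x=\|u\|_{L^2}^2$. The second-order term $2p(p-1)\|u\|^{2(p-2)}(B(u),u)^2\le C\|u\|^{2p}+C$ is handled by the linear growth assumption. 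BDG gives
\[
\mathbb E\sup_t\Bigl|\int_0^t\|u\|^{2p-2}(B(u),u)\,dW\Bigr|\le \tfrac12\,\mathbb E\sup_t\|u\|^{2p}+C\,\mathbb E\int_0^T(1+\|u\|^{2p})\,ds ,
\]
and Gronwall's lemma then yields (a). For (b), raise the energy inequality \eqref{eq2.7} without the expectation to the power $p$ and bound $\mathbb E\sup|M_t|^p$ by BDG and (a). This controls $\mathbb E(\int_0^T\|\nabla u\|^2)^p$ and $\mathbb E(\int_0^T\|u\|_{L^{q+1}}^{q+1})^p$, and the $L^4$ term follows since $q+1\ge4$ and $D$ is bounded.

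\emph{Parts (c)--(f).} For (c), test with $u^3$, i.e.\ apply It\^o's formula to $\|u\|_{L^4}^4$. The chemotactic term becomes $-\tfrac{3\chi}{4}\cdot\ldots(u^4,\Delta v)$-type terms. These are bounded using $\Delta v=v-u$: the sign of the $-u$ contribution is harmless after absorption by $-(u^{q+3},1)$, and $\|v\|_{L^\infty}\le C\|u\|_{L^2}$ handles the rest. Then take the $p$-th power and use BDG as before; the It\^o correction $6\int u^2B(u)^2$ is controlled through \eqref{Assump_LinearGrowth} for $m=0,1$ together with the embedding $H^1\hookrightarrow L^4$. For (d) and (e), apply It\^o's formula to $\|\nabla u\|_{L^2}^2$, which amounts to testing with $-\Delta u$. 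The chemotactic term is split as
\[
\chi(\nabla\cdot(u\nabla v),\Delta u)\le \chi\bigl(\|\nabla u\|_{L^4}\|\nabla v\|_{L^4}+\|u\|_{L^4}\|\Delta v\|_{L^4}\bigr)\|\Delta u\| .
\]
By Ladyzhenskaya's inequality and \eqref{elliptic_estimate} this is at most $\tfrac14\|\Delta u\|^2+C(1+\|u\|_{L^4}^4+\|u\|_{L^2}^4)\|\nabla u\|^2$. The reaction term gives $(g(u),-\Delta u)=\|\nabla u\|^2-q(u^{q-1},|\nabla u|^2)\le\|\nabla u\|^2$, so it has a good sign because $q$ is odd.

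The main obstacle is the resulting random Gronwall coefficient $a(s)=C(1+\|u\|_{L^4}^4)$, which cannot simply be pulled out of an expectation. I would first try a stochastic Gronwall lemma: multiply by $\exp(-\int_0^t a)$ and then use H\"older's inequality with (b), or use Lemma~\ref{lemma_expo_moment_H}, which gives exponential integrability of $\int_0^T\|u\|_{L^{q+1}}^{q+1}$ and hence of $\int_0^Ta$. With that, (d) follows from powers plus BDG, and (e) from the $p$-th power of the integrated identity together with (d). Finally, (f) repeats the argument with $\|\Delta u\|^2$, i.e.\ testing with $\Delta^2u$. Here the extra nonlinear terms $\nabla(u^q)$ are bounded using the $L^\infty$ control $\|u\|_{L^\infty}\le C\|u\|_{H^1}^{1/2}\|u\|_{H^2}^{1/2}$ (up to a logarithmic loss) and (d), and the random coefficients are again handled by the stochastic Gronwall lemma.
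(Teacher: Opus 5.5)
Your arguments for (a), (b) and (e) follow the paper. Your route for (c) differs from the paper's but is sound: you absorb $3\chi\|u\|_{L^5}^5-3\chi(u^4,v)$ into the damping $-4\|u\|_{L^{q+3}}^{q+3}$, whereas the paper uses $\|\nabla v\|_{L^\infty}\le C\|u\|_{L^4}$ together with (b).

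\textbf{The gap is in (d) and (f),} exactly at the step you call the main obstacle.
\begin{itemize}
\item Multiplying by $\rho(t)=\exp(-\int_0^t a\,ds)$ only controls $\mathbb E[\rho(t)\|\nabla u(t)\|_{L^2}^2]$.
\item To remove the weight, either by H\"older or through a Kruse--Scheutzow type stochastic Gronwall lemma, you need $\mathbb E[\exp(c\int_0^T a\,ds)]<\infty$.
\item Part (b) gives only polynomial moments of $\int_0^T\|u\|_{L^4}^4\,ds$, so ``H\"older with (b)'' does not close.
\item Lemma~\ref{lemma_expo_moment_H} would supply the exponential moment, but only under \eqref{Assump_bounded} and $\mathbb E[\exp(2\gamma_1\|u_0\|_{L^2}^2)]<\infty$.
\item Neither assumption is among the hypotheses of (d)--(f), which assume only linear growth of $B$ and polynomial moments of $u_0$.
\end{itemize}
As proposed, you would therefore prove (d) and (f) only under strictly stronger assumptions.

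\textbf{The missing observation} is that this term needs no Gronwall argument at all. The random factor multiplies $\|\nabla u\|_{L^2}^2$, whose time integral already has moments of every order by (b). After It\^o's formula for $\|\nabla u\|_{L^2}^{2p}$, estimate
\[
\int_0^T\|\nabla u\|_{L^2}^{2p-2}a(s)\|\nabla u\|_{L^2}^{2}\,ds\le\sup_{s}\|\nabla u\|_{L^2}^{2p-2}\,\sup_{s}a(s)\int_0^T\|\nabla u\|_{L^2}^2\,ds\le\frac14\sup_{s}\|\nabla u\|_{L^2}^{2p}+C\sup_{s}a(s)^{2p}+C\Bigl(\int_0^T\|\nabla u\|_{L^2}^2\,ds\Bigr)^{2p}.
\]
This is Young's inequality with exponents $\frac{p}{p-1},2p,2p$; for $p=1$ use $ab\le\frac12a^2+\frac12b^2$.
\begin{itemize}
\item The first term on the right is absorbed into the left-hand side.
\item The other two are bounded by (a)--(c) at higher moments. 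This is how the paper treats $I_{1,p}$, with $a=C\|u\|_{L^4}^2$.
\item The only Gronwall step left has a deterministic coefficient, coming from the reaction term and the It\^o correction.
\end{itemize}

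The same device closes (f), now using $\sup_s(1+\|u\|_{H^1}^2)$ from (d) and $\int_0^T\|\Delta u\|_{L^2}^2\,ds$ from (e). There the paper also keeps the good-sign term $-pq\|u^{(q-1)/2}\Delta u\|_{L^2}^2$ coming from $\Delta(u^q)$, and uses it to absorb the cross term $q(q-1)\bigl(u^{q-2}|\nabla u|^2,\Delta u\bigr)$.
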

	}
	
	\begin{proof} First, the proof of (a) can be found in \cite[Lemma 2.1]{thieu2026splitting}. So, we only give the proof of (b), (c), (d) and (e) below. 
		
		(b) To prove \eqref{eq2.20}, we recall the estimate from \eqref{eq2.7}, which implies that
		\begin{align}\label{eq2.24}
			\int_{0}^t \|\nab u(s)\|^2_{L^2}\, ds  + \int_{0}^t \|u(s)\|^{q+1}_{L^{q+1}}\, ds 
			&\leq \|u_0\|^2_{L^2}+ 2\int_0^t (B(u(s)),u(s)) dW(s) \\\nonumber
			&\qquad+\left(2 + c_B^2\right)\int_{0}^t \|u(s)\|^2_{L^2}\, ds + C(\chi,q)|D|T.
		\end{align}
		
		Taking the $pth$-power for $p \geq 1$ and then the expectation to \eqref{eq2.24}, we obtain
		\begin{align}\label{eq2.25}
			&\mE\left[\left(	\int_{0}^T \|\nab u(s)\|^2_{L^2}\, ds \right)^p\right] + \mE\left[\left(	\int_{0}^T \| u(s)\|^{q+1}_{L^{q+1}}\, ds \right)^p\right]
			\\\nonumber
			&\leq C\mE\left[\|u_0\|^{2p}_{L^2}\right]+C\left(2 + c_B^2 \right)^p \mE\left[\left(\int_{0}^T \|u(s)\|^2_{L^2}\, ds\right)^p\right]\\\nonumber
			&\qquad+ C\mE\left[\sup_{t\in[0,T]} \left|\int_0^t (B(u(s)),u(s)) dW(s)\right|^p\right] + C(\chi,q,p,D,T).
		\end{align}
		
		Using the Burkholder-Davis-Gundy inequality and \eqref{Assump_LinearGrowth}, we estimate the last term of \eqref{eq2.25} as follows:
		\begin{align}\label{eq2.26}
			\mE\left[\sup_{t\in[0,T]} \left|\int_0^t (B(u(s)),u(s)) dW(s)\right|^p\right] &\leq C\mE\left[\left(\int_0^T \|B(u(s))\|^2_{L^2}\|u(s)\|^2_{L^2} \,ds\right)^{\frac{p}{2}}\right]\\\nonumber
			&\leq Cc_B^p\mE\left[\left(\int_0^T \|u(s)\|^4_{L^2} \,ds\right)^{\frac{p}{2}}\right]\\\nonumber
			&\leq Cc_B^pT^{p/2}C_{1,p},
		\end{align}
		where the last inequality of \eqref{eq2.26} is obtained by using Lemma \ref{Stability_PDE} (a).
		
		Additionally, using Lemma \ref{Stability_PDE} (a), we also control the second term of \eqref{eq2.25} as follows:
		\begin{align}\label{eq2.27}
			\left(2 + c_B^2\right)^p \mE\left[\left(\int_{0}^T \|u(s)\|^2_{L^2}\, ds\right)^p\right] &\leq \left(2 + c_B^2\right)^p T^p C_{1,p}.
		\end{align}
		
		Combining \eqref{eq2.25}, \eqref{eq2.26} and \eqref{eq2.27} gives us the estimate \eqref{eq2.20}. The proof is complete.
		
		\medskip
		
		(c) 	We jump to prove Part (c). 
		Applying It\^o's formula to $\Phi(u)=\|u\|_{L^4}^{4p}$, we obtain
		\begin{align}\label{eq2.288}
			&	\|u(t)\|_{L^4}^{4p} + 12p \int_{0}^t \|u(s)\|^{4p -4}_{L^4} \|u(s)\nab u(s)\|^2_{L^2}\, ds
			\\\nonumber	&=
			\|u_0\|_{L^4}^{4p}
			-
			4\chi p\int_0^t
			\|u(s)\|_{L^4}^{4p-4}
			\bigl(u^3(s),\nabla\cdot(u(s)\nabla v(s))\bigr)\,ds
			\\\nonumber
			&\quad
			+4p\int_0^t
			\|u(s)\|_{L^4}^{4p-4}
			\bigl(u^3(s), g(u(s))\bigr)\,ds
			\\\nonumber
			&\quad
			+
			4p\int_0^t
			\|u(s)\|_{L^4}^{4p-4}
			\bigl(u^3(s),B(u(s))\bigr)\,dW(s)
			\\\nonumber
			&\quad
			+
			6p\int_0^t
			\|u(s)\|_{L^4}^{4p-4}
			\bigl(u^2(s)B(u(s)),B(u(s))\bigr)\,ds
			\\\nonumber
			&\quad
			+
			8p(p-1)\int_0^t
			\|u(s)\|_{L^4}^{4p-8}
			\bigl|\bigl(u^3(s),B(u(s))\bigr)\bigr|^2\,ds\\\nonumber
			&:= \|u_0\|^{4p}_{L^4} + Z_{1,p} + ... + Z_{5,p}.
		\end{align}
		
		First, using the integration by parts, the Cauchy-Schwarz inequality and Young inequality and \eqref{sobolev_embedding}, we have
		\begin{align}
			Z_{1,p} &= 12\chi p \int_{0}^t \|u(s)\|^{4p-4}_{L^4} \bigl(u^3(s)\nab u(s),\nab v(s)\bigr)\, ds\\\nonumber
			&\leq 6p \int_{0}^t \|u(s)\|^{4p-4}_{L^4} \|u(s)\nab u(s)\|^2_{L^2}\, ds + 6p\chi^2 \int_{0}^t \|u(s)\|^{4p-4}_{L^4} \|u(s)\|^4_{L^4}\|\nab v(s)\|^2_{L^{\infty}}\, ds\\\nonumber
			&\leq 6p \int_{0}^t \|u(s)\|^{4p-4}_{L^4} \|u(s)\nab u(s)\|^2_{L^2}\, ds + C\int_{0}^t \|u(s)\|^{4p-2}_{L^4} \|u(s)\|^4_{L^4}\, ds\\\nonumber
			&\leq 6p \int_{0}^t \|u(s)\|^{4p-4}_{L^4} \|u(s)\nab u(s)\|^2_{L^2}\, ds + C\sup_{s\in[0,T]}\|u(s)\|^{4p-2}_{L^4} \int_{0}^t \|u(s)\|^4_{L^4}\, ds.
		\end{align}
		
		For $p\geq 1$, using the Young inequality we obtain
		\begin{align}
			Z_{1,p}  &\leq 6p \int_{0}^t \|u(s)\|^{4p-4}_{L^4} \|u(s)\nab u(s)\|^2_{L^2}\, ds  +\frac14\sup_{s\in[0,T]}\|u(s)\|^{4p}_{L^4}+ C \left(\int_{0}^t \|u(s)\|^4_{L^4}\, ds\right)^{2p}.
		\end{align}
		
		Using $g(u) = u - u^q$, for odd integers $q \geq 3$, we also have
		\begin{align}
			Z_{2,p} &= 4p\int_{0}^t \|u(s)\|^{4p}_{L^4} \, ds - 4p \int_0^t \|u(s)\|^{4p-4}_{L^4} \|u(s)\|^{q+3}_{L^{q+3}}\, ds.
		\end{align}
		
		Next, using the Burkholder-Davis-Gundy inequality, the Sobolev embedding inequality $\|\phi\|_{L^4} \leq C\|\phi\|_{H^1}$, the assumption \eqref{Assump_LinearGrowth}, and the Young inequality, we also get
		\begin{align}\label{eq2.32}
			\mE\left[Z_{3,p}\right] &\leq 4p \mE\left[\left(\int_{0}^T \|u(s)\|^{8p-8}_{L^4}\|u(s)\|^4_{L^4}\|u(s)B(u(s))\|^2_{L^2}\, ds\right)^{\frac12}\right] \\\nonumber
			&\leq 4p \mE\left[\left(\int_{0}^T \|u(s)\|^{8p-4}_{L^4}\|u(s)\|^2_{L^4}\|B(u(s))\|^2_{L^4}\, ds\right)^{\frac12}\right] \\\nonumber
			&\leq 4p \mE\left[\left(\int_{0}^T \|u(s)\|^{8p-2}_{L^4}\|B(u(s))\|^2_{H^1}\, ds\right)^{\frac12}\right] \\\nonumber
			&\leq 4pc_B \mE\left[\left(\int_{0}^T \|u(s)\|^{8p-2}_{L^4}\left(\|u(s)\|^2_{L^2} + \|\nab u(s)\|^2_{L^2}\right)\, ds\right)^{\frac12}\right] \\\nonumber
			&\leq 4pc_B \mE\left[\sup_{s\in[0,T]}\|u(s)\|^{4p-1}_{L^4}\left(\int_{0}^T \left(\|u(s)\|^2_{L^2} + \|\nab u(s)\|^2_{L^2}\right)\, ds\right)^{\frac12}\right] \\\nonumber
			&\leq \frac14 \mE\left[\sup_{s\in[0,T]}\|u(s)\|^{4p}_{L^4}\right] + C\mE\left[\left(\int_{0}^T \left(\|u(s)\|^2_{L^2} + \|\nab u(s)\|^2_{L^2}\right)\, ds\right)^{2p}\right] \\\nonumber
			&\leq \frac14 \mE\left[\sup_{s\in[0,T]}\|u(s)\|^{4p}_{L^4}\right] + C(C_{1,2p} + C_{2,2p}),
		\end{align}
		where the last inequality of \eqref{eq2.32} is obtained by using Lemma \ref{Stability_PDE} (a) and (b).
		
		Next, using the Young inequality, the Sobolev embedding inequality $\|\phi\|_{L^4} \leq C\|\phi\|_{H^1}$, the assumption \eqref{Assump_LinearGrowth}, we have
		\begin{align}
			Z_{4,p} + Z_{5,p} &\leq 6p \int_0^t \|u(s)\|^{4p-4}_{L^4}\|u(s)\|^2_{L^4}\|B(u(s))\|^2_{L^4}\, ds \\\nonumber
			&\qquad+ 8p(p-1) \int_0^t \|u(s)\|^{4p-8}_{L^4}\|u(s)\|^4_{L^4}\|u(s)B(u(s))\|^2_{L^2}\, ds\\\nonumber
			&\leq 6pCc_B \int_0^t \|u(s)\|^{4p-2}_{L^4}\|u(s)\|^2_{H^1}\, ds \\\nonumber
			&\qquad+ 8p(p-1) \int_0^t \|u(s)\|^{4p-4}_{L^4}\|u(s)\|^2_{L^4}\|B(u(s))\|^2_{L^4}\, ds\\\nonumber
			&\leq 6pCc_B \int_0^t \|u(s)\|^{4p-2}_{L^4}\|u(s)\|^2_{H^1}\, ds \\\nonumber
			&\qquad+ 8p(p-1)Cc_B \int_0^t \|u(s)\|^{4p-2}_{L^4}\|u(s)\|^2_{H^1}\, ds\\\nonumber
			&\leq 14pCc_B \sup_{s\in[0,T]}\|u(s)\|^{4p-2}_{L^4}\int_0^t \left(\|u(s)\|^2_{L^2} + \|\nab u(s)\|^2_{L^2}\right)\, ds \\\nonumber
			&\leq \frac14\sup_{s\in[0,T]}\|u(s)\|^{4p-2}_{L^4} + C\left(\int_0^t \left(\|u(s)\|^2_{L^2} + \|\nab u(s)\|^2_{L^2}\right)\, ds\right)^{2p}.
		\end{align}
		
		Now, collecting and sustituting all the estimates from $Z_{1,p},..., Z_{5,p}$ into the right-hand side of \eqref{eq2.288} with the expectation, we arrive at
		\begin{align}\label{eq2.355}
			&\frac14\mE\left[\sup_{s\in[0,T]}\|u(s)\|_{L^4}^{4p}\right] + 6p \int_{0}^T\mE\left[ \|u(s)\|^{4p -4}_{L^4} \|u(s)\nab u(s)\|^2_{L^2}\right]\, ds
			\\\nonumber	&\leq 
			\mE\left[\|u_0\|_{L^4}^{4p}\right] + C\mE\left[\left(\int_{0}^T \|u(s)\|^4_{L^4}\, ds\right)^{2p}\right] + 4p\int_0^T \mE\left[\sup_{r\in[0,s]}\|u(r)\|^{4p}_{L^4}\right]\, ds \\\nonumber
			&\qquad+ C(C_{1,2p} + C_{2,2p}) + C\mE\left[\left(\int_0^T \left(\|u(s)\|^2_{L^2} + \|\nab u(s)\|^2_{L^2}\right)\, ds\right)^{2p}\right]\\\nonumber
			&\leq 
			\mE\left[\|u_0\|_{L^4}^{4p}\right] + CC_{2,2p}+ 4p\int_0^T \mE\left[\sup_{r\in[0,s]}\|u(r)\|^{4p}_{L^4}\right]\, ds + 2C(C_{1,2p} + C_{2,2p}).
		\end{align}
		
		Lastly, using the Gronwall inequality and \eqref{eq2.355} we establish the estimate \eqref{eq2.21} for any $p\geq 1$. The proof is complete.

		\medskip
		
		(d)	Applying the It\^o formula to $\Phi(u(t)) = \|\nab u(t)\|^{2p}_{L^2}$, 
		we obtain
		\begin{align}\label{eq2.22}
			&	\|\nabla u(t)\|_{L^2}^{2p} + 2p \int_{0}^t \|\nab u(s)\|^{2p-2}_{L^2}\|\Delta u(s)\|^2_{L^2}\, ds
			\\\nonumber	&=
			\|\nabla u_0\|_{L^2}^{2p}
			+2\chi p\int_0^t
			\|\nabla u(s)\|_{L^2}^{2p-2}
			\bigl(\Delta u(s),\nabla\cdot(u(s)\nabla v(s))\bigr)\,ds \\\nonumber
			&\qquad-2p\int_0^t
			\|\nabla u(s)\|_{L^2}^{2p-2}
			\bigl(\Delta u(s), g(u(s))\bigr)\,ds
			\\\nonumber
			&\qquad
			+p\int_0^t
			\|\nabla u(s)\|_{L^2}^{2p-2}
			\|\nabla B(u(s))\|_{L^2}^{2}\,ds
			\\\nonumber
			&\qquad
			+2p(p-1)\int_0^t
			\|\nabla u(s)\|_{L^2}^{2p-4}
			\bigl|(\nabla u(s),\nabla B(u(s)))\bigr|^2\,ds
			\\\nonumber
			&\qquad
			+2p\int_0^t
			\|\nabla u(s)\|_{L^2}^{2p-2}
			\bigl(\nabla u(s),\nabla B(u(s))\bigr)\,dW(s)\\\nonumber
			&:= \|\nab u_0\|^{2p}_{L^2} + L_1 + L_2 + L_3 + L_4 + L_5.
		\end{align}
		
		Now, we estimate the right-hand side \eqref{eq2.22}. First, by using the integration by parts, we have
		\begin{align}\label{eq2.16}
			L_2 &= 2p \int_{0}^t\|\nab u(s)\|^{2p}_{L^2}\, ds -2pq\int_{0}^t \|\nab u(s)\|^{2p-2}_{L^2} \|u^{\frac{q-1}{2}}(s)\nab u(s)\|^2_{L^2}\, ds.
		\end{align}
		
		Next, using the integration by parts and then \eqref{elliptic_estimate} and \eqref{sobolev_embedding}, we also obtain
		\begin{align}\label{eq2.17}
			L_1 &=  2p\chi \int_{0}^t \|\nab u(s)\|^{2p-2}_{L^2} (\Delta u(s),\nab\cdot(u(s)\nab v(s)))\, ds \\\nonumber
			&=2p\chi\int_{0}^t \|\nab u(s)\|^{2p-2}_{L^2}\left[(\Delta u(s),\nab u\nab v) + (\Delta u(s), u(s) \Delta v(s))\right]\, ds\\\nonumber
			&\leq 2p \chi\int_{0}^t  \|\nab u(s)\|^{2p-2}_{L^2} \left[\|\Delta u(s)\|_{L^2}\|\nab u\|_{L^2}\|\nab v\|_{L^{\infty}} + \|\Delta u(s)\|_{L^2}\|u(s)\|_{L^4}\|\Delta v(s)\|_{L^4}\right]\, ds \\\nonumber
			&\leq 2p\chi C\int_{0}^t  \|\nab u(s)\|^{2p-2}_{L^2} \left[\|\Delta u(s)\|_{L^2}\|\nab u\|_{L^2}\|u(s)\|_{L^{4}} + \|\Delta u(s)\|_{L^2}\|u(s)\|_{L^4}\|u(s)\|_{L^4}\right]\, ds\\\nonumber
			&\leq p\int_{0}^t \|\nab u(s)\|^{2p-2}_{L^2}\|\Delta u(s)\|^2_{L^2} \, ds   +  C\int_{0}^t \|\nab u(s)\|^{2p-2}_{L^2} \|\nab u(s)\|^2_{L^2}\|u(s)\|^2_{L^4}\, ds \\\nonumber
			&\qquad+ C\int_{0}^t \|\nab u(s)\|^{2p-2}_{L^2} \|u(s)\|^4_{L^4}\, ds.
		\end{align} 
		
		To estimate $L_3$ and $L_4$, we use the Cauchy-Schwarz inequality and the assumption \eqref{Assump_LinearGrowth} with $m=1$ as follows:
		
		\begin{align}
			L_3 + L_4 &\leq pc_B \int_{0}^{t} \|\nab u(s)\|^{2p-2}_{L^2}\|\nab u(s)\|^{2}_{L^2}\, ds + 2p(p-1)c_B \int_{0}^{t} \|\nab u(s)\|^{2p-4}_{L^2}\|\nab u(s)\|^{4}_{L^2}\, ds \\\nonumber
			&= (2p - 1)pc_B \int_{0}^{t} \|\nab u(s)\|^{2p}_{L^2}\, ds.
		\end{align}
		
		Substituting all the estimates from $L_1, L_2, L_3, L_4$ and $L_5$ into the right-hand side of \eqref{eq2.22} we arrive at
		\begin{align}
			&	\|\nabla u(t)\|_{L^2}^{2p} + p \int_{0}^t \|\nab u(s)\|^{2p-2}_{L^2}\|\Delta u(s)\|^2_{L^2}\, ds\\\nonumber
			&\leq \|\nab u_0\|^{2p}_{L^2}+ C\int_{0}^t \|\nab u(s)\|^{2p-2}_{L^2} \|\nab u(s)\|^2_{L^2}\|u(s)\|^2_{L^4}\, ds + C\int_{0}^t \|\nab u(s)\|^{2p-2}_{L^2} \|u(s)\|^4_{L^4}\, ds\\\nonumber
			&\qquad+ [2p + (2p-1)pc_B] \int_{0}^t\|\nab u(s)\|^{2p}_{L^2}\, ds +2p\int_0^t
			\|\nabla u(s)\|_{L^2}^{2p-2}
			\bigl(\nabla u(s),\nabla B(u(s))\bigr)\,dW(s),
		\end{align}
		which implies that
		\begin{align}\label{eq2.18}
			&	\sup_{t\in[0,T]}\|\nabla u(t)\|_{L^2}^{2p} + p \int_{0}^T \|\nab u(s)\|^{2p-2}_{L^2}\|\Delta u(s)\|^2_{L^2}\, ds\\\nonumber
			&\leq \|\nab u_0\|^{2p}_{L^2}+ C\int_{0}^T \|\nab u(s)\|^{2p-2}_{L^2} \|\nab u(s)\|^2_{L^2}\|u(s)\|^2_{L^4}\, ds + C\int_{0}^T \|\nab u(s)\|^{2p-2}_{L^2} \|u(s)\|^4_{L^4}\, ds\\\nonumber
			&\qquad+ [2p + (2p-1)pc_B] \int_{0}^T\|\nab u(s)\|^{2p}_{L^2}\, ds \\\nonumber
			&\qquad+2p\sup_{t\in[0,T]}\left|\int_0^t
			\|\nabla u(s)\|_{L^2}^{2p-2}
			\bigl(\nabla u(s),\nabla B(u(s))\bigr)\,dW(s)\right|\\\nonumber
			&:= \|\nab u_0\|^{2p}_{L^2} + I_{1,p} + I_{2,p} + I_{3,p} + I_{4,p}.
		\end{align}
		
		Now, we consider the case when $p =1$. Then, using the Young inequality, we obtain
		\begin{align}\label{eq2.28}
			I_{1,1} &= C\int_{0}^T \|\nab u(s)\|^2_{L^2}\|u(s)\|^2_{L^4}\, ds\\\nonumber
			&\leq C\sup_{s\in[0,T]} \|u(s)\|^2_{L^4}\int_{0}^T  \|\nab u(s)\|^2_{L^2}\, ds\\\nonumber
			&\leq \frac14 \sup_{s\in[0,T]} \|u(s)\|^{4}_{L^4} + C\left(\int_{0}^T  \|\nab u(s)\|^2_{L^2}\, ds\right)^{2}.
		\end{align}
		
		By using the Burkholder-Davis-Gundy inequality and then \eqref{Assump_LinearGrowth} and the Young inequality, we obtain
		\begin{align}\label{eq2.29}
			\mE[I_{4,1}] &= 2\mE\left[\sup_{t\in[0,T]}\left|\int_0^t
			\bigl(\nabla u(s),\nabla B(u(s))\bigr)\,dW(s)\right|\right]\\\nonumber
			&\leq C\mE\left[\left(\int_{0}^{T} \|\nab u(s)\|^2_{L^2}\|\nab B(u(s))\|^2_{L^2}\, ds\right)^{\frac12}\right]\\\nonumber
			&\leq Cc_B\mE\left[\left(\int_{0}^{T} \|\nab u(s)\|^2_{L^2}\|\nab u(s)\|^2_{L^2}\, ds\right)^{\frac12}\right]\\\nonumber
			&\leq Cc_B\mE\left[\sup_{s\in[0,T]}\|\nab u(s)\|_{L^2}\left(\int_{0}^{T} \|\nab u(s)\|^2_{L^2}\, ds\right)^{\frac12}\right]\\\nonumber
			&\leq \frac14\mE\left[\sup_{s\in[0,T]}\|\nab u(s)\|^2_{L^2}\right] + C\mE\left[\int_{0}^{T} \|\nab u(s)\|^2_{L^2}\, ds\right].
		\end{align}
		
		Substituting \eqref{eq2.28} and \eqref{eq2.29} into the right-hand side of \eqref{eq2.18} with $p=1$ and the expectation $\mE[\cdot]$, we arrive at
		\begin{align}\label{eq2.30}
			&	\frac34\mE\left[\sup_{t\in[0,T]}\|\nabla u(t)\|_{L^2}^{2}\right] +  \mE\left[\int_{0}^T \|\Delta u(s)\|^2_{L^2}\, ds\right]\\\nonumber
			&\leq \mE\left[\|\nab u_0\|^{2}_{L^2}\right]+\frac14 \mE\left[ \sup_{s\in[0,T]} \|u(s)\|^{4}_{L^4} \right] + C\mE\left[\left(\int_{0}^T  \|\nab u(s)\|^2_{L^2}\, ds\right)^{2}\right] \\\nonumber
			&\qquad+ C\mE\left[\int_{0}^T\|u(s)\|^4_{L^4}\, ds\right]+ (2 + c_B + C) \int_{0}^T\mE\left[\sup_{s\in [0,t]}\|\nab u(s)\|^{2}_{L^2}\right]\, dt\\\nonumber
			&\leq \mE\left[\|\nab u_0\|^{2}_{L^2}\right]+\frac14 C_{4,1}+ C_{2,2} + C_{1,1}  + (2 + c_B + C) \int_{0}^T\mE\left[\sup_{s\in [0,t]}\|\nab u(s)\|^{2}_{L^2}\right]\, dt,
		\end{align}
		where the last step of \eqref{eq2.30} is obtained by using Lemma \ref{Stability_PDE} (a), (b), (c). Next, using the Gronwall inequality, we obtain the estimate \eqref{eq_2.22} with $p =1$.
		
		Now, we consider the cases when $p > 1$. Then, the term $I_{1,p}$ on the right-hand side of \eqref{eq2.18} is estimated as follows: Using the Young inequality, we have
		\begin{align}
			I_{1,p}&= C\int_{0}^T \|\nab u(s)\|^{2p-2}_{L^2} \|\nab u(s)\|^2_{L^2}\|u(s)\|^2_{L^4}\, ds\\\nonumber
			&\leq C\sup_{s\in[0,T]} \|\nab u(s)\|^{2p-2}_{L^2}\|u(s)\|^2_{L^4}\int_{0}^T  \|\nab u(s)\|^2_{L^2}\, ds\\\nonumber
			&\leq \frac14 \sup_{s\in[0,T]} \|\nab u(s)\|^{2p}_{L^2} + C\sup_{s\in[0,T]} \|u(s)\|^{4p}_{L^4} + C\left(\int_{0}^T  \|\nab u(s)\|^2_{L^2}\, ds\right)^{2p}.
		\end{align}
		Similarly, we also have
		\begin{align}
			I_{2,p} &=C\int_{0}^T \|\nab u(s)\|^{2p-2}_{L^2} \|u(s)\|^4_{L^4}\, ds\\\nonumber
			&\leq C\sup_{s\in[0,T]}\|\nab u(s)\|^{2p-2}_{L^2} \int_{0}^T \|u(s)\|^4_{L^4}\, ds\\\nonumber
			&\leq \frac14 \sup_{s\in[0,T]}\|\nab u(s)\|^{2p}_{L^2} + C\left(\int_{0}^T \|u(s)\|^4_{L^4}\, ds\right)^p\\\nonumber
			&\leq \frac14 \sup_{s\in[0,T]}\|\nab u(s)\|^{2p}_{L^2} + C\left(\int_{0}^T \|u(s)\|^4_{L^4}\, ds\right)^p\\\nonumber
			&\leq \frac14 \sup_{s\in[0,T]}\|\nab u(s)\|^{2p}_{L^2} + CT\sup_{s\in[0,T]} \|u(s)\|^{4p}_{L^4}. 
		\end{align}
		
		Next, we also need to use the Burkholder-Davis-Gundy inequality to control $I_4$ for $p>1$. 
		\begin{align}\label{eq2.35}
			\mE[I_{4,p}]  &=2p\mE\left[\sup_{t\in[0,T]}\left|\int_0^t
			\|\nabla u(s)\|_{L^2}^{2p-2}
			\bigl(\nabla u(s),\nabla B(u(s))\bigr)\,dW(s)\right|\right]\\\nonumber
			&\leq C \mE\left[\left(\int_{0}^T \|\nab u(s)\|^{4p-4}_{L^2}\|\nab u(s)\|^2_{L^2}\|\nab B(u(s))\|^2_{L^2} \, ds\right)^{\frac12}\right]\\\nonumber
			&\leq Cc_B \mE\left[\left(\int_{0}^T \|\nab u(s)\|^{4p-4}_{L^2}\|\nab u(s)\|^2_{L^2}\|\nab u(s)\|^2_{L^2} \, ds\right)^{\frac12}\right]\\\nonumber
			&\leq Cc_B \mE\left[\sup_{s\in[0,T]}\|\nab u(s)\|^{2p-1}_{L^2}\left(\int_{0}^T \|\nab u(s)\|^2_{L^2} \, ds\right)^{\frac12}\right]\\\nonumber
			&\leq \frac14 \mE\left[\sup_{s\in[0,T]}\|\nab u(s)\|^{2p}_{L^2}\right] + C\mE\left[\left(\int_{0}^T \|\nab u(s)\|^{2}_{L^2} \, ds\right)^p\right],
		\end{align}
		where the last inequality of \eqref{eq2.35} is obtained by using the Young inequality.
		
		Substituting all the estimates from $I_{1,p},..., I_{4,p}$ into the right-hand of \eqref{eq2.18} with the expectation, we obtain
		\begin{align}\label{eq2.36}
			&\frac14	\mE\left[\sup_{t\in[0,T]}\|\nabla u(t)\|_{L^2}^{2p} \right]+ p \int_{0}^T\mE\left[ \|\nab u(s)\|^{2p-2}_{L^2}\|\Delta u(s)\|^2_{L^2}\right]\, ds\\\nonumber
			&\leq \mE\left[\|\nab u_0\|^{2p}_{L^2}\right] + C\mE\left[\sup_{s\in[0,T]} \|u(s)\|^{4p}_{L^4} + \left(\int_{0}^T  \|\nab u(s)\|^2_{L^2}\, ds\right)^{2p}\right]\\\nonumber
			&\qquad + CT\mE\left[\sup_{s\in[0,T]} \|u(s)\|^{4p}_{L^4}\right] + [2p + (2p-1)pc_B]\mE\left[ \int_{0}^T\|\nab u(s)\|^{2p}_{L^2}\, ds \right] \\\nonumber
			&\qquad+ C\mE\left[\left(\int_{0}^T \|\nab u(s)\|^{2}_{L^2} \, ds\right)^p\right]\\\nonumber
			&\leq \mE\left[\|\nab u_0\|^{2p}_{L^2}\right] + C_{3,p} + C_{2,2p} + TC_{3,p} + C_{2,p} + [2p + (2p-1)pc_B]\mE\left[ \int_{0}^T\|\nab u(s)\|^{2p}_{L^2}\, ds \right],
		\end{align}
		where the last inequality of \eqref{eq2.36} is obtained by using Lemma \ref{Stability_PDE} (b), and (c).
		
		Lastly, using the Gronwall inequality and \eqref{eq2.36} we arrive the estimate \eqref{eq_2.22} for any $p \geq 1$.

		\medskip
		
		(e) The proof of \eqref{eq2.23} is similar to the one in \eqref{eq2.20}. So, we skip this proof to save space.

		\medskip
		
		(f) Applying the It\^o formula to $\Phi(u(t))=\|\Delta u(t)\|_{L^2}^{2p}$, we obtain
		\begin{align}\label{eq_H2_Ito}
			&\|\Delta u(t)\|_{L^2}^{2p}+2p\int_0^t\|\Delta u(s)\|_{L^2}^{2p-2}\|\nab\Delta u(s)\|_{L^2}^2\,ds\\\nonumber
			&=\|\Delta u_0\|_{L^2}^{2p}-2p\chi\int_0^t\|\Delta u(s)\|_{L^2}^{2p-2}\bigl(\Delta u(s),\Delta\nab\cdot(u(s)\nab v(s))\bigr)\,ds\\\nonumber
			&\qquad+2p\int_0^t\|\Delta u(s)\|_{L^2}^{2p-2}\bigl(\Delta u(s),\Delta g(u(s))\bigr)\,ds\\\nonumber
			&\qquad+p\int_0^t\|\Delta u(s)\|_{L^2}^{2p-2}\|\Delta B(u(s))\|_{L^2}^2\,ds\\\nonumber
			&\qquad+2p(p-1)\int_0^t\|\Delta u(s)\|_{L^2}^{2p-4}\bigl|(\Delta u(s),\Delta B(u(s)))\bigr|^2\,ds\\\nonumber
			&\qquad+2p\int_0^t\|\Delta u(s)\|_{L^2}^{2p-2}\bigl(\Delta u(s),\Delta B(u(s))\bigr)\,dW(s)\\\nonumber
			&:= \|\Delta u_0\|_{L^2}^{2p} + Z_1 + Z_2 + Z_3 + Z_4 + Z_5.
		\end{align}
		
		Now, we estimate the right-hand side of \eqref{eq_H2_Ito} as follows. First, using the fact that $g(u) = u - u^q$, we have
		\begin{align}
			Z_2 &= 2p\int_0^t\|\Delta u(s)\|_{L^2}^{2p-2}\bigl(\Delta u(s),\Delta g(u(s))\bigr)\,ds\\\nonumber
			&=2p\int_0^t\|\Delta u(s)\|_{L^2}^{2p}\,ds - 2p\int_0^t\|\Delta u(s)\|_{L^2}^{2p-2}\bigl(\Delta u(s),\Delta(u(s)^q)\bigr)\,ds.
		\end{align}
	With this and using the identity $\Delta(u^q)
	=
	q u^{q-1}\Delta u
	+
	q(q-1)u^{q-2}|\nab u|^2,$ we obtain
		\begin{align}\label{eq2.56}
		Z_2 
		&=2p\int_0^t\|\Delta u(s)\|_{L^2}^{2p}\,ds - 2pq\int_0^t\|\Delta u(s)\|_{L^2}^{2p-2} \|u^{\frac{q-1}{2}} \Delta u(s)\|^2_{L^2}\,ds \\\nonumber
		&\qquad- 2pq(q-1)\int_0^t\|\Delta u(s)\|_{L^2}^{2p-2}\bigl(u^{{q-2}}\Delta u(s),|\nab u(s)|^2\bigr)\,ds.
	\end{align}
The last term of \eqref{eq2.56} can be estimated as follows:
\begin{align}\label{eq2.57}
	&2pq(q-1)\int_0^t\|\Delta u(s)\|_{L^2}^{2p-2}\left|\bigl(u^{q-2}\Delta u(s),|\nab u(s)|^2\bigr)\right|\,ds\\\nonumber
	&\quad\leq 2pq(q-1)\int_0^t\|\Delta u(s)\|_{L^2}^{2p-2}\|u^{\frac{q-1}{2}}\Delta u(s)\|_{L^2}\|u^{\frac{q-3}{2}}|\nab u(s)|^2\|_{L^2}\,ds\\\nonumber
	&\quad\leq pq\int_0^t\|\Delta u(s)\|_{L^2}^{2p-2}\|u^{\frac{q-1}{2}}\Delta u(s)\|_{L^2}^2\,ds+pq(q-1)^2\int_0^t\|\Delta u(s)\|_{L^2}^{2p-2}\int_D|u(s)|^{q-3}|\nab u(s)|^4\,dx\,ds.
\end{align}
Moreover, by the Cauchy--Schwarz inequality, the Sobolev embedding $H^1(D)\hookrightarrow L^{2(q-3)}(D)$, and the Gagliardo--Nirenberg inequality, we have
\begin{align}\label{eq2.58}
	\int_D|u(s)|^{q-3}|\nab u(s)|^4\,dx&\leq \|u(s)\|_{L^{2(q-3)}}^{q-3}\|\nab u(s)\|_{L^8}^4\\\nonumber
	&\leq C\|u(s)\|_{H^1}^{q-3}\|\nab u(s)\|_{L^2}\|\Delta u(s)\|_{L^2}^3\\\nonumber
	&\leq C\|u(s)\|_{H^1}^{q-2}\|\Delta u(s)\|_{L^2}^3.
\end{align}
Consequently,
\begin{align}\label{eq2.59}
	&\int_0^t\|\Delta u(s)\|_{L^2}^{2p-2}\int_D|u(s)|^{q-3}|\nab u(s)|^4\,dx\,ds\\\nonumber
	&\quad\leq C\sup_{0\leq s\leq t}\|\Delta u(s)\|_{L^2}^{2p-1}\int_0^t\|u(s)\|_{H^1}^{q-2}\|\Delta u(s)\|_{L^2}^2\,ds\\\nonumber
	&\quad\leq \frac{1}{16pq(q-1)^2}\sup_{0\leq s\leq t}\|\Delta u(s)\|_{L^2}^{2p}
	+\frac{1}{2p}\left(8C(2p-1)q(q-1)^2\right)^{2p-1}
	\left(\int_0^t\|u(s)\|_{H^1}^{q-2}\|\Delta u(s)\|_{L^2}^2\,ds\right)^{2p}\\\nonumber
	&\quad\leq \frac{1}{16pq(q-1)^2}\sup_{0\leq s\leq t}\|\Delta u(s)\|_{L^2}^{2p}
\\\nonumber
&\qquad	+\frac{1}{2p}\left(8C(2p-1)q(q-1)^2\right)^{2p-1}
	\sup_{0\leq s\leq t}\|u(s)\|_{H^1}^{2p(q-2)}
	\left(\int_0^t\|\Delta u(s)\|_{L^2}^2\,ds\right)^{2p}.
\end{align}
Substituting \eqref{eq2.57} and \eqref{eq2.59} into \eqref{eq2.56}, we obtain
\begin{align}\label{eq2.60}
	Z_2
	&\leq 2p\int_0^t\|\Delta u(s)\|_{L^2}^{2p}\,ds-pq\int_0^t\|\Delta u(s)\|_{L^2}^{2p-2}\|u^{\frac{q-1}{2}}\Delta u(s)\|_{L^2}^2\,ds\\\nonumber
	&\qquad+\frac{1}{16}\sup_{0\leq s\leq t}\|\Delta u(s)\|_{L^2}^{2p}\\\nonumber
	&\qquad+\frac{q(q-1)^2}{2}\left(8C(2p-1)q(q-1)^2\right)^{2p-1}
	\sup_{0\leq s\leq t}\|u(s)\|_{H^1}^{2p(q-2)}
	\left(\int_0^t\|\Delta u(s)\|_{L^2}^2\,ds\right)^{2p}.
\end{align}

Now, we move on to estimate $Z_1$. Using integration by parts and the periodic boundary conditions, we have
\begin{align}\label{eq2.62}
	Z_1&=-2p\chi\int_0^t\|\Delta u(s)\|_{L^2}^{2p-2}\bigl(\Delta u(s),\Delta\nab\cdot(u(s)\nab v(s))\bigr)\,ds\\\nonumber
	&=2p\chi\int_0^t\|\Delta u(s)\|_{L^2}^{2p-2}\bigl(\nab\Delta u(s),\nab\nab\cdot(u(s)\nab v(s))\bigr)\,ds\\\nonumber
	&\leq \frac{p}{4}\int_0^t\|\Delta u(s)\|_{L^2}^{2p-2}\|\nab\Delta u(s)\|_{L^2}^2\,ds+4p\chi^2\int_0^t\|\Delta u(s)\|_{L^2}^{2p-2}\|\nab\nab\cdot(u(s)\nab v(s))\|_{L^2}^2\,ds.
\end{align}
The last term on the right-hand side of $Z_1$ can be estimated as follows: First, using the identity
\begin{align*}
	\nab\nab\cdot(u\nab v)=D^2u\,\nab v+D^2v\,\nab u+(\Delta v)\nab u+u\nab\Delta v,
\end{align*}
we have
\begin{align}
	\|\nab\nab\cdot(u\nab v)\|_{L^2}^2
	&\leq C\|D^2u\|_{L^2}^2\|\nab v\|_{L^\infty}^2+C\|D^2v\|_{L^4}^2\|\nab u\|_{L^4}^2+C\|\Delta v\|_{L^4}^2\|\nab u\|_{L^4}^2+C\|u\|_{L^\infty}^2\|\nab\Delta v\|_{L^2}^2.
\end{align}
By \eqref{elliptic_estimate} with $r=4$, \eqref{sobolev_embedding}, and \eqref{elliptic_estimate_higher} with $m=1$, we have
\begin{align*}
	\|\nab v\|_{L^\infty}+\|D^2v\|_{L^4}+\|\Delta v\|_{L^4}&\leq C\|u\|_{L^4},\\
	\|\nab\Delta v\|_{L^2}&\leq C\|v\|_{H^3}\leq C\|u\|_{H^1}.
\end{align*}
Moreover, using integration by parts and the periodic boundary conditions, we have $\|D^2u\|_{L^2}=\|\Delta u\|_{L^2}$. Consequently, using the Gagliardo--Nirenberg inequalities
\[
\|\nab u\|_{L^4}^2\leq C\|\nab u\|_{L^2}\|\Delta u\|_{L^2},\qquad \|u\|_{L^\infty}^2\leq C\|u\|_{H^1}\|u\|_{H^2},
\]
and the estimate $\|u\|_{H^2}\leq C(\|u\|_{H^1}+\|\Delta u\|_{L^2})$, we obtain
\begin{align}\label{eq2.64}
	\|\nab\nab\cdot(u\nab v)\|_{L^2}^2
	&\leq C\|u\|_{L^4}^2\|\Delta u\|_{L^2}^2+C\|u\|_{L^4}^2\|\nab u\|_{L^4}^2+C\|u\|_{L^\infty}^2\|u\|_{H^1}^2\\\nonumber
	&\leq C\|u\|_{H^1}^2\|\Delta u\|_{L^2}^2+C\|u\|_{H^1}^3\|\Delta u\|_{L^2}+C\|u\|_{H^1}^4\\\nonumber
	&\leq C\left(1+\|u\|_{H^1}^2\right)\|\Delta u\|_{L^2}^2+C\left(1+\|u\|_{H^1}^6\right).
\end{align}
Substituting \eqref{eq2.64} into \eqref{eq2.62}, and using Young's inequality, we obtain
\begin{align}
	Z_1
	&\leq \frac{p}{4}\int_0^t\|\Delta u(s)\|_{L^2}^{2p-2}\|\nab\Delta u(s)\|_{L^2}^2\,ds+C p\chi^2\int_0^t\left(1+\|u(s)\|_{H^1}^2\right)\|\Delta u(s)\|_{L^2}^{2p}\,ds\\\nonumber
	&\qquad+C p\chi^2\int_0^t\left(1+\|u(s)\|_{H^1}^6\right)\|\Delta u(s)\|_{L^2}^{2p-2}\,ds\\\nonumber
	&\leq \frac{p}{4}\int_0^t\|\Delta u(s)\|_{L^2}^{2p-2}\|\nab\Delta u(s)\|_{L^2}^2\,ds+C p\chi^2\sup_{0\leq s\leq t}\|\Delta u(s)\|_{L^2}^{2p-2}\int_0^t\left(1+\|u(s)\|_{H^1}^2\right)\|\Delta u(s)\|_{L^2}^2\,ds\\\nonumber
	&\qquad+C p\chi^2\int_0^t\left(1+\|u(s)\|_{H^1}^6\right)\|\Delta u(s)\|_{L^2}^{2p-2}\,ds\\\nonumber
	&\leq \frac{p}{4}\int_0^t\|\Delta u(s)\|_{L^2}^{2p-2}\|\nab\Delta u(s)\|_{L^2}^2\,ds+\frac{1}{16}\sup_{0\leq s\leq t}\|\Delta u(s)\|_{L^2}^{2p}\\\nonumber
	&\qquad+\frac{1}{p}\left(\frac{16(p-1)}{p}\right)^{p-1}(Cp\chi^2)^p\left(\int_0^t\left(1+\|u(s)\|_{H^1}^2\right)\|\Delta u(s)\|_{L^2}^2\,ds\right)^p\\\nonumber
	&\qquad+\frac{p-1}{p}\int_0^t\|\Delta u(s)\|_{L^2}^{2p}\,ds+\frac{(Cp\chi^2)^p}{p}\int_0^t\left(1+\|u(s)\|_{H^1}^6\right)^p\,ds\\\nonumber
	&\leq \frac{p}{4}\int_0^t\|\Delta u(s)\|_{L^2}^{2p-2}\|\nab\Delta u(s)\|_{L^2}^2\,ds+\frac{1}{16}\sup_{0\leq s\leq t}\|\Delta u(s)\|_{L^2}^{2p}\\\nonumber
	&\qquad+\frac{1}{p}\left(\frac{16(p-1)}{p}\right)^{p-1}(Cp\chi^2)^p\sup_{0\leq s\leq T}\left(1+\|u(s)\|_{H^1}^2\right)^p\left(\int_0^t\|\Delta u(s)\|_{L^2}^2\,ds\right)^p\\\nonumber
	&\qquad+\frac{p-1}{p}\int_0^t\|\Delta u(s)\|_{L^2}^{2p}\,ds+\frac{(Cp\chi^2)^p}{p}\int_0^t\left(1+\|u(s)\|_{H^1}^6\right)^p\,ds.
\end{align}

For $Z_3$ and $Z_4$, using the assumption \eqref{Assump_LinearGrowth} with $m=2$ and the Cauchy--Schwarz inequality, we obtain
\begin{align}
	Z_3+Z_4
	&\leq p\int_0^t\|\Delta u(s)\|_{L^2}^{2p-2}\|\Delta B(u(s))\|_{L^2}^2\,ds\\\nonumber
	&\qquad+2p(p-1)\int_0^t\|\Delta u(s)\|_{L^2}^{2p-2}\|\Delta B(u(s))\|_{L^2}^2\,ds\\\nonumber
	&=p(2p-1)\int_0^t\|\Delta u(s)\|_{L^2}^{2p-2}\|\Delta B(u(s))\|_{L^2}^2\,ds\\\nonumber
	&\leq p(2p-1)c_B\int_0^t\|\Delta u(s)\|_{L^2}^{2p-2}\left(1+\|\Delta u(s)\|_{L^2}^2\right)\,ds\\\nonumber
	&\leq (2p-1)(p-1)c_B\int_0^t\|\Delta u(s)\|_{L^2}^{2p}\,ds+(2p-1)c_B\int_0^t1\,ds\\\nonumber
	&\qquad+p(2p-1)c_B\int_0^t\|\Delta u(s)\|_{L^2}^{2p}\,ds\\\nonumber
	&\leq (2p-1)^2c_B\int_0^t\|\Delta u(s)\|_{L^2}^{2p}\,ds+(2p-1)c_BT.
\end{align}

To estimate $Z_5$, we take the supremum over $t$ and then the expectation, and use the Burkholder--Davis--Gundy inequality as follows:
\begin{align}
	\mE\left[\sup_{0\leq s\leq t}|Z_5|\right]
	&\leq C p\,\mE\left[\left(\int_0^t\|\Delta u(s)\|_{L^2}^{4p-4}\bigl|(\Delta u(s),\Delta B(u(s)))\bigr|^2\,ds\right)^{\frac12}\right]\\\nonumber
	&\leq C p\,\mE\left[\sup_{0\leq s\leq t}\|\Delta u(s)\|_{L^2}^{p}\left(\int_0^t\|\Delta u(s)\|_{L^2}^{2p-2}\|\Delta B(u(s))\|_{L^2}^2\,ds\right)^{\frac12}\right]\\\nonumber
	&\leq \frac{1}{16}\mE\left[\sup_{0\leq s\leq t}\|\Delta u(s)\|_{L^2}^{2p}\right]+4C^2p^2\mE\left[\int_0^t\|\Delta u(s)\|_{L^2}^{2p-2}\|\Delta B(u(s))\|_{L^2}^2\,ds\right]\\\nonumber
	&\leq \frac{1}{16}\mE\left[\sup_{0\leq s\leq t}\|\Delta u(s)\|_{L^2}^{2p}\right]+4C^2p^2c_B\mE\left[\int_0^t\|\Delta u(s)\|_{L^2}^{2p-2}\left(1+\|\Delta u(s)\|_{L^2}^2\right)\,ds\right]\\\nonumber
	&\leq \frac{1}{16}\mE\left[\sup_{0\leq s\leq t}\|\Delta u(s)\|_{L^2}^{2p}\right]+4C^2p^2c_B(2p-1)\int_0^t\mE\left[\|\Delta u(s)\|_{L^2}^{2p}\right]\,ds+4C^2p^2c_B\frac{T}{p}.
\end{align}

Substituting the estimates for $Z_1,\ldots,Z_5$ into \eqref{eq_H2_Ito}, taking the supremum over $0\leq s\leq t$ for any $t\in[0,T]$, and then taking the expectation, we obtain
\begin{align}\label{eq_H2_combined}
	&\frac{13}{16}\mE\left[\sup_{0\leq s\leq t}\|\Delta u(s)\|_{L^2}^{2p}\right]
	+\frac{7p}{4}\mE\left[\int_0^t\|\Delta u(s)\|_{L^2}^{2p-2}\|\nab\Delta u(s)\|_{L^2}^2\,ds\right]\\\nonumber
	&\qquad+pq\,\mE\left[\int_0^t\|\Delta u(s)\|_{L^2}^{2p-2}\|u(s)^{\frac{q-1}{2}}\Delta u(s)\|_{L^2}^2\,ds\right]\\\nonumber
	&\leq \mE\left[\|\Delta u_0\|_{L^2}^{2p}\right]
	+\left(2p+\frac{p-1}{p}+(2p-1)^2c_B+4C^2p^2c_B(2p-1)\right)
	\int_0^t\mE\left[\sup_{0\leq r\leq s}\|\Delta u(r)\|_{L^2}^{2p}\right]\,ds\\\nonumber
	&\qquad+\frac{q(q-1)^2}{2}\left(8C(2p-1)q(q-1)^2\right)^{2p-1}
	\mE\left[\sup_{0\leq s\leq T}\|u(s)\|_{H^1}^{2p(q-2)}
	\left(\int_0^T\|\Delta u(s)\|_{L^2}^2\,ds\right)^{2p}\right]\\\nonumber
	&\qquad+\frac{1}{p}\left(\frac{16(p-1)}{p}\right)^{p-1}(Cp\chi^2)^p
	\mE\left[\sup_{0\leq s\leq T}\left(1+\|u(s)\|_{H^1}^2\right)^p
	\left(\int_0^T\|\Delta u(s)\|_{L^2}^2\,ds\right)^p\right]\\\nonumber
	&\qquad+\frac{(Cp\chi^2)^p}{p}\mE\left[\int_0^T\left(1+\|u(s)\|_{H^1}^6\right)^p\,ds\right]
	+(2p-1)c_BT+4C^2p^2c_B\frac{T}{p}.
\end{align}

Using Lemma~\ref{Stability_PDE}(d) and (e), together with the Cauchy--Schwarz inequality, we bound the remaining terms on the right-hand side of \eqref{eq_H2_combined}. First, we have
\begin{align}
	&\mE\left[\sup_{0\leq s\leq T}\|u(s)\|_{H^1}^{2p(q-2)}
	\left(\int_0^T\|\Delta u(s)\|_{L^2}^2\,ds\right)^{2p}\right]\\\nonumber
	&\quad\leq
	\left\{\mE\left[\sup_{0\leq s\leq T}\|u(s)\|_{H^1}^{4p(q-2)}\right]\right\}^{\frac12}
	\left\{\mE\left[\left(\int_0^T\|\Delta u(s)\|_{L^2}^2\,ds\right)^{4p}\right]\right\}^{\frac12}
	\leq C_{4,2p(q-2)}^{\frac12}C_{5,4p}^{\frac12}.
\end{align}
Similarly,
\begin{align}
	&\mE\left[\sup_{0\leq s\leq T}\left(1+\|u(s)\|_{H^1}^2\right)^p
	\left(\int_0^T\|\Delta u(s)\|_{L^2}^2\,ds\right)^p\right]\\\nonumber
	&\quad\leq
	\left\{\mE\left[\sup_{0\leq s\leq T}\left(1+\|u(s)\|_{H^1}^2\right)^{2p}\right]\right\}^{\frac12}
	\left\{\mE\left[\left(\int_0^T\|\Delta u(s)\|_{L^2}^2\,ds\right)^{2p}\right]\right\}^{\frac12}\\\nonumber
	&\quad\leq C\left(1+C_{4,2p}^{\frac12}\right)C_{5,2p}^{\frac12},
\end{align}
and
\begin{align}
	\mE\left[\int_0^T\left(1+\|u(s)\|_{H^1}^6\right)^p\,ds\right]
	&\leq CT\left(1+\mE\left[\sup_{0\leq s\leq T}\|u(s)\|_{H^1}^{6p}\right]\right)
	\leq CT\left(1+C_{4,3p}\right).
\end{align}
Therefore, \eqref{eq_H2_combined} implies
\begin{align}
	&\frac{13}{16}\mE\left[\sup_{0\leq s\leq t}\|\Delta u(s)\|_{L^2}^{2p}\right]
	+\frac{7p}{4}\mE\left[\int_0^t\|\Delta u(s)\|_{L^2}^{2p-2}\|\nab\Delta u(s)\|_{L^2}^2\,ds\right]\\\nonumber
	&\qquad+pq\,\mE\left[\int_0^t\|\Delta u(s)\|_{L^2}^{2p-2}\|u(s)^{\frac{q-1}{2}}\Delta u(s)\|_{L^2}^2\,ds\right]\\\nonumber
	&\leq \mE\left[\|\Delta u_0\|_{L^2}^{2p}\right]
	+\frac{q(q-1)^2}{2}\left(8C(2p-1)q(q-1)^2\right)^{2p-1}C_{4,2p(q-2)}^{\frac12}C_{5,4p}^{\frac12}\\\nonumber
	&\qquad+\frac{1}{p}\left(\frac{16(p-1)}{p}\right)^{p-1}(Cp\chi^2)^pC\left(1+C_{4,2p}^{\frac12}\right)C_{5,2p}^{\frac12}
	+\frac{(Cp\chi^2)^p}{p}CT\left(1+C_{4,3p}\right)\\\nonumber
	&\qquad+(2p-1)c_BT+4C^2p^2c_B\frac{T}{p}\\\nonumber
	&\qquad+\left(2p+\frac{p-1}{p}+(2p-1)^2c_B+4C^2p^2c_B(2p-1)\right)
	\int_0^t\mE\left[\sup_{0\leq r\leq s}\|\Delta u(r)\|_{L^2}^{2p}\right]\,ds.
\end{align}

Hence, an application of Gronwall's inequality yields
\begin{align}
	\mE\left[\sup_{0\leq s\leq T}\|\Delta u(s)\|_{L^2}^{2p}\right] + \mE\left[\int_0^T\|\Delta u(s)\|_{L^2}^{2p-2}\|\nab\Delta u(s)\|_{L^2}^2\,ds\right]\leq C_{6,p},
\end{align}
where $C_{6,p}=C\left(p,T,\chi,q,D,c_B,\mE[\|u_0\|_{H^2}^{2p}]\right)>0$. This completes the proof.

	\end{proof}

\begin{remark}
	The periodic boundary conditions in \eqref{eq1.1} are imposed primarily to establish the higher-order regularity estimate in Lemma~\ref{Stability_PDE}(f). The remaining estimates in Lemma~\ref{Stability_PDE} can also be established under homogeneous Neumann boundary conditions. Consequently, the temporal H\"older continuity estimate in Lemma~\ref{Lemma_Holder}(b), which relies on the higher-order regularity estimate, also requires periodic boundary conditions.
\end{remark}
	
	\subsection{Variational mixed formulation}
To control the second-order nonlinear term in \eqref{eq_2.1a}, we introduce the new variable $\vsigma=\nab v$. Using \eqref{eq_2.1b}, we obtain
\begin{align}
	\bigl(\vsigma,\vvarphi\bigr)+\bigl(v,\nab\cdot\vvarphi\bigr)&=0, \label{eq2.1}\\
	\bigl(\nab\cdot\vsigma,\psi\bigr)+(u,\psi)&=\bigl(v,\psi\bigr), \label{eq2.2}
\end{align}
for all $\vvarphi\in\vH^1(D)$ and $\psi\in L^2(D)$, where integration by parts in \eqref{eq2.1} produces no boundary term due to the periodic boundary conditions. Next, choosing $\psi=\nab\cdot\vvarphi$ in \eqref{eq2.2} and substituting the resulting identity into \eqref{eq2.1}, and adding the zero term $\bigl(\operatorname{rot}\vsigma,\operatorname{rot}\vvarphi\bigr)$, since $\operatorname{rot}\vsigma=0$, we obtain the following mixed variational formulation: Find
$(u,\vsigma,v)\in \left[L^2(\Ome;C([0,T];L^2(D)))\cap L^2(\Ome;L^2(0,T;H^1(D)))\right]\newline\times L^2(\Ome;C([0,T];\vH^1(D)))\times L^2(\Ome;C([0,T];L^2(D)))$
such that $\mP$-a.s.
\begin{subequations}\label{Weak_formulation}
	\begin{align}\label{eq_sigma}
		\bigl(\vsigma,\vvarphi\bigr)+\bigl(\nab\cdot\vsigma,\nab\cdot\vvarphi\bigr)+\bigl(\operatorname{rot}\vsigma,\operatorname{rot}\vvarphi\bigr)&=-\bigl(u,\nab\cdot\vvarphi\bigr), \\
	 \label{eq_u}	\bigl(u(t),\varphi\bigr)+\int_0^t\bigl(\nab u,\nab\varphi\bigr)\,ds&=\bigl(u_0,\varphi\bigr)+\chi\int_0^t\bigl(u\vsigma,\nab\varphi\bigr)\,ds+\int_0^t\bigl(g(u),\varphi\bigr)\,ds\\\nonumber
		&\qquad+\left(\int_0^tB(u)\,dW(s),\varphi\right),\\
		\bigl(v,\psi\bigr)&=\bigl(\nab\cdot\vsigma,\psi\bigr)+(u,\psi), \label{eq_c}
	\end{align}
\end{subequations}
for all $(\varphi,\vvarphi,\psi)\in H^1(D)\times\vH^1(D)\times L^2(D)$.
	
	Next, we state the following H\"older continuity estimates for the variational solutions. Their proofs can be found similar to the proof \cite[Lemma 2.1]{thieu2026splitting} with a little modification on the nonlinear logistic growth. To save space, we skip the proof.

\begin{lemma}\label{Lemma_Holder}
	Let $(u,\vsigma,v)$ be the solution of
	\eqref{Weak_formulation}.
	Then the following estimates hold.
	
	\begin{enumerate}
		\item[(a)] Let $p\geq1$. Assume that $B$ satisfies
		\eqref{Assump_LinearGrowth} with $m=0$ and
		\[
		u\in L^{2p}(\Omega;C([0,T];H^1(D)))
		\cap L^{2pq}(\Omega;L^{2pq}(0,T;H^1(D))).
		\]
		Then, for all $0\leq s\leq t\leq T$,
		\begin{align}
			\mE\left[
			\|u(t)-u(s)\|_{L^2}^{2p}
			\right]
			&\leq \widetilde C_{1,p}|t-s|^p,
			\label{eq_holder_L2}\\
			\mE\left[
			\|\vsigma(t)-\vsigma(s)\|_{H^1}^{2p}
			\right]
			&\leq \widetilde C_{1,p}|t-s|^p,
			\label{eq_holder_sigma}
		\end{align}
		where
		\begin{align*}
			\widetilde C_{1,p}
			:=C\left(
			1+\sup_{0\leq r\leq T}
			\mE\left[\|u(r)\|_{H^1}^{2p}\right]+\int_0^T\mE\left[
			\|u(r)\|_{L^4}^{2p}
			\|\nab u(r)\|_{L^2}^{2p}
			+\|u(r)\|_{L^4}^{4p}
			+\|u(r)\|_{L^{2q}}^{2pq}
			\right]\,dr\right).
		\end{align*}
		
		\item[(b)] Let $p\geq1$. Assume that $B$ satisfies
		\eqref{Assump_LinearGrowth} with $m=1$ and
		\[
		u\in L^{2p}(\Omega;L^\infty(0,T;H^2(D)))
		\cap L^{2pq}(\Omega;L^\infty(0,T;H^1(D))).
		\]
		Then, for all $0\leq s\leq t\leq T$,
		\begin{align}
			\mE\left[
			\|\nab(u(t)-u(s))\|_{L^2}^{2p}
			\right]
			&\leq \widetilde C_{2,p}|t-s|^p,
			\label{eq_holder_H1_2}
		\end{align}
		where
		\begin{align*}
			\widetilde C_{2,p}
			:=C\left(1+\mE\Bigl[
			\sup_{0\leq r\leq T}
			\|u(r)\|_{H^2}^{2p}
			\Bigr]+\mE\Bigl[
			\sup_{0\leq r\leq T}
			\|u(r)\|_{H^1}^{2pq}
			\Bigr]\right).
		\end{align*}
	\end{enumerate}
	Here, $C>0$ may depend on $p,T,\chi,q,D$
	and $c_B$,
	but is independent of $s$ and $t$.
\end{lemma}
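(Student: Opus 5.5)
The plan is to work directly from the integral identity \eqref{eq_u}. Subtracting it at times $s$ and $t$, and writing the diffusion term in strong form (justified by the regularity in the hypotheses), gives
\begin{align*}
u(t)-u(s)=\int_s^t\Bigl[\Delta u-\chi\nab\cdot(u\vsigma)+g(u)\Bigr]\,dr+\int_s^tB(u)\,dW(r).
\end{align*}

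\textbf{Part (a), estimate for $u$.} I would avoid pairing with $\Delta u$ in $L^2$, since the hypotheses only give $H^1$-type control. Instead I would apply It\^o's formula to $\Phi(u(t))=\|u(t)-u(s)\|_{L^2}^{2p}$ for fixed $s$, with $t$ the running variable, and treat each drift term as follows.
\begin{enumerate}
\item[(i)] The Laplacian contributes $-\int_s^t\|u-u(s)\|^{2p-2}_{L^2}(\nab u,\nab(u-u(s)))\,dr$. This is bounded by $C\int_s^t\|u-u(s)\|^{2p-2}_{L^2}\|u\|_{H^1}^2\,dr$, using $\|\nab(u-u(s))\|_{L^2}\le 2\sup_r\|u(r)\|_{H^1}$.
\item[(ii)] The chemotaxis term $\chi(u\vsigma,\nab(u-u(s)))$ is bounded via $\|u\|_{L^4}\|\vsigma\|_{L^4}\|\nab(u-u(s))\|_{L^2}$, with $\|\vsigma\|_{L^4}\le C\|u\|_{L^2}$ from \eqref{elliptic_estimate}.
\item[(iii)] The reaction term uses $\|u^q\|_{L^2}=\|u\|_{L^{2q}}^q$.
\item[(iv)] The It\^o correction is handled by \eqref{Assump_LinearGrowth}, and the martingale part vanishes in expectation.
\end{enumerate}
Young's inequality with exponents $(p/(p-1),p)$ then gives
\begin{align*}
\mE\|u(t)-u(s)\|^{2p}_{L^2}\le C\int_s^t\mE\|u(r)-u(s)\|^{2p}_{L^2}\,dr+C(t-s)^{p}\,\widetilde C_{1,p}.
\end{align*}
The point here is that each integrand power $\int_s^t a\,dr$, raised to the power $p$, is bounded by $(t-s)^{p-1}\int_s^t a^p\,dr$ via H\"older. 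Gronwall then closes the estimate.

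A cleaner alternative, which I would actually prefer, avoids It\^o altogether. I would test the identity against $u(t)-u(s)$, apply H\"older in time, and use BDG on the stochastic term, which directly yields the $(t-s)^p$ scaling.

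\textbf{Part (a), estimate for $\vsigma$.} By \eqref{eq_sigma}, $\vsigma(t)-\vsigma(s)$ solves the coercive problem with data $u(t)-u(s)$. Choosing $\vvarphi=\vsigma(t)-\vsigma(s)$ gives
\begin{align*}
\|\vsigma(t)-\vsigma(s)\|_{H^1}\le\|u(t)-u(s)\|_{L^2}.
\end{align*}
So \eqref{eq_holder_sigma} follows immediately from \eqref{eq_holder_L2}.

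\textbf{Part (b).} I would repeat the argument with the $H^1$ seminorm, testing the equation with $-\Delta(u(t)-u(s))$, or equivalently applying the gradient before taking the $L^2$ norm. The drift terms become:
\begin{enumerate}
\item[(i)] $\|\nab\Delta u\|$ paired against $\nab(u-u(s))$, or via integration by parts $\|\Delta u\|_{L^2}\|\Delta(u-u(s))\|_{L^2}$, which is controlled by $\sup\|u\|_{H^2}$.
\item[(ii)] $\nab\cdot(u\vsigma)$ in $L^2$, bounded by $C\|u\|_{H^1}\|u\|_{L^4}+\|u\|_{L^\infty}\|u\|_{L^2}$, and hence by $C\|u\|_{H^2}^2$-type quantities.
\item[(iii)] $\|\nab u^q\|_{L^2}\le q\|u\|_{L^\infty}^{q-1}\|\nab u\|_{L^2}$, which via Agmon-type bounds reduces to powers of $\|u\|_{H^1}$ and $\|u\|_{H^2}$. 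This is absorbed by the assumed $L^{2pq}(\Omega;L^\infty(0,T;H^1))$ and $H^2$ moments, using H\"older/Young to split the product into the two suprema.
\end{enumerate}
The noise term is controlled with BDG and \eqref{Assump_LinearGrowth} for $m=1$, giving $\mE(\int_s^t\|u\|_{H^1}^2\,dr)^p\le C(t-s)^p$.

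\textbf{Main obstacle.} I expect the polynomial reaction term in (b) to be the hardest step. One must bound $\|u\|_{L^\infty}^{(q-1)2p}\|\nab u\|^{2p}$ by exactly the quantities appearing in $\widetilde C_{2,p}$. For this I would use the 2D inequality $\|u\|_{L^\infty}\le C\|u\|_{H^1}^{1/2}\|u\|_{H^2}^{1/2}$ followed by Young's inequality to land on $\|u\|_{H^2}^{2p}+\|u\|_{H^1}^{2pq}$. If the exponents do not close, I would use the cruder Sobolev bound $\|u\|_{L^\infty}\le C\|u\|_{H^2}$ only where needed.
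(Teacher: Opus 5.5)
The paper omits this proof, but the constants $\widetilde C_{1,p}$ and $\widetilde C_{2,p}$ indicate how each term is meant to be handled. Measured against them, two of your estimates do not close.

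\textbf{Part (a).} Your It\^o route fails as written. Young's inequality applied pointwise in $r$ to $\|u(r)-u(s)\|_{L^2}^{2p-2}a(r)$ leaves $\frac1p\int_s^t\mE[a(r)^p]\,dr=O(|t-s|)$. Gronwall then gives only $C|t-s|$, which is the wrong exponent for $p>1$. The bound $(\int_s^t a\,dr)^p\le (t-s)^{p-1}\int_s^t a^p\,dr$ never enters unless you first pull out $\sup_{r\in[s,t]}\|u(r)-u(s)\|_{L^2}^{2p-2}$, and that requires a BDG estimate of $\mE\sup$ rather than Gronwall.

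Your preferred route (test the increment identity with $w:=u(t)-u(s)$, H\"older in time, BDG for the noise) is the right one, but not with the weak-form bound (ii). Pairing $\chi\int_s^t u\vsigma\,dr$ with $\nab w$ leaves a factor $\|\nab w\|_{L^2}=O(1)$ that cannot be absorbed into $\|w\|_{L^2}^2$. After Cauchy--Schwarz in $\Omega$ you obtain $C(t-s)^{p-\frac12}\bigl(\int_s^t\mE[\|u\|_{L^4}^{2p}\|u\|_{L^2}^{2p}]\,dr\bigr)^{1/2}$. This is $O(|t-s|^p)$ only under a $\sup_r$ bound on this $4p$-th order moment, which the stated hypotheses do not provide and $\widetilde C_{1,p}$ does not contain.

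The fix is to integrate by parts and keep the chemotaxis term in strong form, $\nab\cdot(u\vsigma)=\nab u\cdot\vsigma+u\,\nab\cdot\vsigma$. By \eqref{elliptic_estimate} and \eqref{sobolev_embedding} with $r=4$, $\|\vsigma\|_{L^\infty}+\|\nab\cdot\vsigma\|_{L^4}\le C\|u\|_{L^4}$, hence $\|\nab\cdot(u\vsigma)\|_{L^2}\le C\|u\|_{L^4}(\|\nab u\|_{L^2}+\|u\|_{L^4})$. Pair this term, like $g(u)$, with $\|w\|_{L^2}$ and absorb by Young. Then use $\mE(\int_s^tF\,dr)^{2p}\le (t-s)^{2p-1}\int_0^T\mE[F^{2p}]\,dr$ with $2p-1\ge p$. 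This is exactly where the terms $\|u\|_{L^4}^{2p}\|\nab u\|_{L^2}^{2p}+\|u\|_{L^4}^{4p}$ and $\|u\|_{L^{2q}}^{2pq}$ in $\widetilde C_{1,p}$ come from. Only the Laplacian is paired in $H^1$--$H^{-1}$, which is why $\sup_r\mE\|u(r)\|_{H^1}^{2p}$ appears. Your coercivity argument reducing \eqref{eq_holder_sigma} to \eqref{eq_holder_L2} is correct.

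\textbf{Part (b).} The step you flag as the main obstacle does fail as proposed. With $\|u\|_{L^\infty}\le C\|u\|_{H^1}^{1/2}\|u\|_{H^2}^{1/2}$, the quantity $\|u\|_{L^\infty}^{2p(q-1)}\|\nab u\|_{L^2}^{2p}$ becomes $\|u\|_{H^1}^{p(q+1)}\|u\|_{H^2}^{p(q-1)}$. Since $q\ge3$, the $H^2$ exponent is already at least $2p$, so no Young or H\"older splitting can land on $\mE\sup_r\|u\|_{H^2}^{2p}+\mE\sup_r\|u\|_{H^1}^{2pq}$. Your fallback $\|u\|_{L^\infty}\le C\|u\|_{H^2}$ only raises that exponent.

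The missing idea is not to differentiate $g$ at all. Test with $-\Delta w$ and estimate $|(\int_s^t g(u)\,dr,\Delta w)|\le\|\Delta w\|_{L^2}\int_s^t(\|u\|_{L^2}+\|u\|_{L^{2q}}^q)\,dr\le C(t-s)\sup_r\|u\|_{H^2}\bigl(1+\sup_r\|u\|_{H^1}^q\bigr)$. Taking the $p$-th power and using Cauchy--Schwarz in $\Omega$ gives $C(t-s)^p\bigl(\mE\sup_r\|u\|_{H^2}^{2p}\bigr)^{1/2}\bigl(1+\mE\sup_r\|u\|_{H^1}^{2pq}\bigr)^{1/2}$, which is precisely the structure of $\widetilde C_{2,p}$.

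Similarly, in (ii) of part (b), use $\|u\,\nab\cdot\vsigma\|_{L^2}\le\|u\|_{L^4}\|\nab\cdot\vsigma\|_{L^4}\le C\|u\|_{L^4}^2$ rather than an $\|u\|_{H^2}^2$-type bound. Pairing the latter with $\|\Delta w\|_{L^2}$ would require $3p$-th moments of $\sup_r\|u\|_{H^2}$. Your treatment in (b) of the Laplacian (via $\|\Delta u\|_{L^2}\|\Delta w\|_{L^2}$) and of the noise (BDG with \eqref{Assump_LinearGrowth}, $m=1$) is fine.
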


	\section{Fully discrete splitting mixed finite element method}\label{sec4}
	
\subsection{Formulation of the finite element method}

In this part, we consider the fully discrete method for \eqref{Weak_formulation}, which uses the implicit Euler scheme for the time discretization and the splitting mixed finite element method for the spatial discretization.

First, let $k=\frac{T}{M}$ be the uniform time step of a partition $\{t_m\}_{m=0}^M$ over the time interval $[0,T]$. Let $\{\mathcal{T}_h\}_{h>0}$ be a family of quasi-uniform periodic triangular meshes of $D=(0,L)^2$, with the mesh size $0<h\ll1$. We consider the finite element spaces $X_u^h\times X_{\vsigma}^h\times X_v^h$, which are generated by polynomials of degree at most $r_1,r_2,r_3\geq1$, respectively. In addition, the splitting mixed finite element method does not require the LBB inf-sup condition for its finite element spaces. Therefore, we freely choose linear polynomials for our finite element spaces. Although we consider only linear finite element spaces in this paper, all the error estimates for the finite element approximation can be extended to higher-order polynomial finite element spaces.

Next, we introduce the following projections:
\begin{align}
	&\cP_u:L^2(D)\rightarrow X_u^h,\qquad \cP_{\vsigma}:\vH^1(D)\rightarrow X_{\vsigma}^h,\qquad \cP_c:L^2(D)\rightarrow X_v^h,
\end{align}
where the operators satisfy
\begin{align}
	\label{Pu}\bigl(v-\cP_uv,\varphi_h\bigr)&=0\qquad\forall\,\varphi_h\in X_u^h,\\
	\label{Ps}\bigl(\vv-\cP_{\vsigma}\vv,\vvarphi_h\bigr)+\bigl(\nab\cdot(\vv-\cP_{\vsigma}\vv),\nab\cdot\vvarphi_h\bigr)+\bigl(rot(\vv-\cP_{\vsigma}\vv),rot\,\vvarphi_h\bigr)&=0\qquad\forall\,\vvarphi_h\in X_{\vsigma}^h,\\
	\label{Pc}\bigl(v-\cP_cv,\psi_h\bigr)&=0\qquad\forall\,\psi_h\in X_v^h.
\end{align}

Then, we have the following interpolation inequalities:
\begin{align}\label{ineq_interpolation}
	\|v-\cP_uv\|_{L^2}+h\|v-\cP_uv\|_{H^1}&\leq Ch^2\|v\|_{H^2}\qquad\forall\,v\in H^2(D),\\\nonumber
	\|\pphi-\cP_{\vsigma}\pphi\|_{L^2}+h\|\pphi-\cP_{\vsigma}\pphi\|_{\vH^1}&\leq Ch^2\|\pphi\|_{\vH^2}\qquad\forall\,\pphi\in\vH^2(D),\\\nonumber
	\|v-\cP_cv\|_{L^2}+h\|v-\cP_cv\|_{H^1}&\leq Ch^2\|v\|_{H^2}\qquad\forall\,v\in H^2(D).
\end{align}

Then we can state the fully discrete implicit Euler finite element method for \eqref{Weak_formulation} in the following algorithm.

\bigskip
{\bf Main Algorithm:} Let $u_h^0=\mathcal{P}_u u_0$ and $\vsigma_h^0=\cP_{\vsigma}\vsigma_0$. For any $0\leq m\leq M-1$, we do the following iterations: Find $\bigl(u_h^{m+1},\vsigma_h^{m+1},v_h^{m+1}\bigr)\in X_u^h\times X_{\vsigma}^h\times X_v^h$ such that $\mP$-a.s.
\begin{align}
	\label{eq_discrete_sigma}
	&\bigl(\vsigma_h^{m+1},\vvarphi_h\bigr)+\bigl(\nab\cdot\vsigma_h^{m+1},\nab\cdot\vvarphi_h\bigr)+\bigl(rot\,\vsigma_h^{m+1},rot\,\vvarphi_h\bigr)=-\bigl(u_h^m,\nab\cdot\vvarphi_h\bigr),\\
	\medskip
	\label{eq_discrete_u}
	&\bigl(u_h^{m+1}-u_h^m,\varphi_h\bigr)+k\bigl(\nab u_h^{m+1},\nab\varphi_h\bigr)=k\chi\bigl(u_h^{m+1}\vsigma_h^{m+1},\nab\varphi_h\bigr)\\\nonumber
	&\qquad\qquad\qquad\qquad\qquad\qquad\qquad+k\bigl(g(u_h^{m+1}),\varphi_h\bigr)+\bigl(B(u_h^m)\Delta W_m,\varphi_h\bigr),\\
	\medskip
	\label{eq_discrete_c}
	&\bigl(v_h^{m+1},\psi_h\bigr)=\bigl(\nab\cdot\vsigma_h^{m+1},\psi_h\bigr)+\bigl(u_h^{m+1},\psi_h\bigr),
\end{align}
for all $\bigl(\varphi_h,\vvarphi_h,\psi_h\bigr)\in X_u^h\times X_{\vsigma}^h\times X_v^h$, where $\Delta W_m=W(t_{m+1})-W(t_m)\sim\mathcal{N}(0,k)$.

	Since we choose $u^0_h = \mathcal{P}_u u_0$ and $\vsigma_h^0 = \cP_{\vsigma} \vsigma_0$ and \eqref{ineq_interpolation}, without loss of generality, we assume that $u^0_h = u_0$ and $\vsigma_h^0  = \vsigma_0$ in all the proofs of the stability and error estimates of the Main Algorithm.

	Now, we present the stability estimates of the Main Algorithm. 
	
	\begin{lemma}\label{lemma_fem_stability} Let $(u^m_h,\vsigma_h^m,v^m_h)$ be the solution of the Main Algorithm. Assume that $u_0 \in L^2(\Ome; L^2(D))$. Then, there exist a constant $\bar{C}>0$ and $k_0>0$ such that for all $k\in(0,k_0)$
		\begin{enumerate}
			\item[(a)] $\displaystyle \mE\left[\max_{1\leq m \leq M}\|u_h^m\|^2_{L^2} +  k\sum_{m=1}^{M}\|\nab u_h^{m}\|^2_{L^2} +  k\sum_{m=1}^{M}\| u_h^{m}\|^{q+1}_{L^{q+1}}\right] \leq \bar{C}$,
			\item[(b)] $\displaystyle \mE\left[\max_{1\leq m \leq M}\|\vsigma_h^m\|^2_{H^1}\right] \leq \bar{C}$,
			\item[(c)] $\displaystyle \mE\left[\max_{1\leq m \leq M} \|v_h^m\|^2_{L^2}\right]  \leq \bar{C}$.
		\end{enumerate}
	\end{lemma}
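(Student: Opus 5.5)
The argument follows the continuous energy estimate of Lemma~\ref{lemma_expo_moment_H}, but in a discrete form. It starts with (a), and then (b) and (c) follow from it through the linear elliptic sub-steps \eqref{eq_discrete_sigma} and \eqref{eq_discrete_c}.

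\textbf{Step 1: the $\vsigma$-equation.} Take $\vvarphi_h=\vsigma_h^{m+1}$ in \eqref{eq_discrete_sigma}. The left side is exactly $\|\vsigma_h^{m+1}\|_{H^1}^2$ in the equivalent norm, and the right side is bounded by $\|u_h^m\|_{L^2}\|\nab\cdot\vsigma_h^{m+1}\|_{L^2}$. Hence
\[
\|\vsigma_h^{m+1}\|_{H^1}\le\|u_h^m\|_{L^2}.
\]
Taking $\psi_h=v_h^{m+1}$ in \eqref{eq_discrete_c} gives
\[
\|v_h^{m+1}\|_{L^2}\le\|\nab\cdot\vsigma_h^{m+1}\|_{L^2}+\|u_h^{m+1}\|_{L^2}.
\]
So (b) and (c) reduce to (a), together with the initial-data bound.

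\textbf{Step 2: energy identity for $u$.} Take $\varphi_h=u_h^{m+1}$ in \eqref{eq_discrete_u} and use $2(a-b,a)=\|a\|^2-\|b\|^2+\|a-b\|^2$. This produces $\frac12(\|u_h^{m+1}\|^2-\|u_h^m\|^2+\|u_h^{m+1}-u_h^m\|^2)+k\|\nab u_h^{m+1}\|^2$ on the left, and the logistic term gives $k\|u_h^{m+1}\|^2-k\|u_h^{m+1}\|_{L^{q+1}}^{q+1}$.

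The noise term splits as
\[
(B(u_h^m)\Delta W_m,u_h^m)+(B(u_h^m)\Delta W_m,u_h^{m+1}-u_h^m).
\]
The first piece is a martingale increment with zero mean. The second is absorbed by $\frac14\|u_h^{m+1}-u_h^m\|^2+\|B(u_h^m)\|^2|\Delta W_m|^2$, whose expectation is at most $Ck(1+\mE\|u_h^m\|^2)$ by \eqref{Assump_LinearGrowth}.

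\textbf{Step 3: the chemotactic term (main obstacle).} The term $k\chi(u_h^{m+1}\vsigma_h^{m+1},\nab u_h^{m+1})$ cannot be integrated by parts into $\frac12(u^2,\nab\cdot\vsigma)$ as in \eqref{eq2.6}, because $\vsigma_h$ is only a discrete gradient. I will estimate it directly:
\[
k\chi\|u_h^{m+1}\|_{L^4}\|\vsigma_h^{m+1}\|_{L^4}\|\nab u_h^{m+1}\|_{L^2}
\le \tfrac k2\|\nab u_h^{m+1}\|^2+Ck\|u_h^{m+1}\|_{L^4}^2\|u_h^m\|_{L^2}^2 .
\]
Here I use $\|\vsigma_h^{m+1}\|_{L^4}\le C\|\vsigma_h^{m+1}\|_{H^1}\le C\|u_h^m\|_{L^2}$ from Step 1.

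Next, Hölder gives $\|u\|_{L^2}\le C\|u\|_{L^{q+1}}$ and $\|u\|_{L^4}\le C\|u\|_{L^{q+1}}$. Since $q\ge3$, Young's inequality bounds the product by $\frac k4\|u_h^{m+1}\|_{L^{q+1}}^{q+1}+\frac k4\|u_h^m\|_{L^{q+1}}^{q+1}+Ck$; this step needs $2+2<2(q+1)$ split appropriately, which holds for $q\ge3$. The term $\|u_h^m\|_{L^{q+1}}^{q+1}$ at the old level is then absorbed after summation by the same dissipation at the previous step, with the $m=0$ contribution covered by the initial data. This lag is the delicate point.

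If that absorption fails to close, the fallback is to use the bound $(u_h^{m+1})^2\le\frac{1}{2}\bigl((u_h^{m+1})^{q+1}+1\bigr)$-type pointwise splitting, or to take $k_0$ small so that the linear terms $k\|u_h^{m+1}\|^2$ can be moved to the left.

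\textbf{Step 4: summation and the supremum.} Sum over $m$ and take the maximum over $m$ before the expectation. The discrete martingale $\sum_m(B(u_h^m),u_h^m)\Delta W_m$ is handled by the discrete Burkholder--Davis--Gundy inequality, which bounds it by $\frac12\mE\max_m\|u_h^m\|^2+C k\sum_m\mE\|u_h^m\|^2$. The discrete Gronwall lemma, valid for $k<k_0$, then yields (a). Finally, (b) and (c) follow from Step 1 by taking maxima and expectations.
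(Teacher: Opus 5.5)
There is a genuine gap in Step 3, and it rests on a false premise. Integration by parts is available here. Since $\vsigma_h^{m+1}\in X_{\vsigma}^h\subset\vH^1(D)$ and everything is periodic,
\[
\bigl(u_h^{m+1}\vsigma_h^{m+1},\nab u_h^{m+1}\bigr)=\tfrac12\bigl(\vsigma_h^{m+1},\nab (u_h^{m+1})^2\bigr)=-\tfrac12\bigl((u_h^{m+1})^2,\nab\cdot\vsigma_h^{m+1}\bigr),
\]
and this uses no gradient structure of $\vsigma_h$. The paper does exactly this. Combined with $\|\nab\cdot\vsigma_h^{m+1}\|_{L^2}\le\|u_h^m\|_{L^2}$ from your Step 1, the chemotactic term is bounded by the \emph{cubic} quantity $\chi k\|u_h^{m+1}\|_{L^4}^2\|u_h^m\|_{L^2}$. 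Young's inequality with exponents $\frac{q+1}{2}$ and $\frac{q+1}{q-1}$ then gives $\frac k4\|u_h^{m+1}\|_{L^{q+1}}^{q+1}+Ck\|u_h^m\|_{L^2}^2+Ck$ for every $q\ge3$, because $\frac{q+1}{q-1}\le 2$. Only the $L^2$ norm appears at the old time level, so there is no lag and Gronwall closes directly.

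Your direct estimate instead produces the \emph{quartic} term $Ck\|u_h^{m+1}\|_{L^4}^2\|u_h^m\|_{L^2}^2$, with $C\sim\chi^2$.
\begin{itemize}
\item Absorbing it into $\varepsilon k\bigl(\|u_h^{m+1}\|_{L^{q+1}}^{q+1}+\|u_h^m\|_{L^{q+1}}^{q+1}\bigr)+C_\varepsilon k$ requires total degree $4<q+1$, i.e.\ $q>3$. Your condition $2+2<2(q+1)$ is not the right one.
\item For $q=3$, which the lemma covers, the term has the same homogeneity as the dissipation $k\|u_h^{m+1}\|_{L^4}^4$. It can be absorbed only if $\chi^2$ times the Sobolev and domain constants is small.
\item Neither fallback helps. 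A pointwise splitting cannot lower the degree, and shrinking $k_0$ does not change the ratio between two $O(k)$ terms.
\item Even for $q\ge5$, the lag leaves $\frac k4\|u_h^0\|_{L^{q+1}}^{q+1}$ at $m=0$. Its expectation is not controlled by the hypothesis $u_0\in L^2(\Omega;L^2(D))$. Keeping the old level in $L^2$ instead yields the superlinear power $\|u_h^m\|_{L^2}^{2(q+1)/(q-1)}$, which a linear Gronwall argument in expectation cannot handle.
\end{itemize}

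The rest of your proposal is sound. Your discrete Burkholder--Davis--Gundy step in Step 4 is in fact the correct way to obtain $\mE[\max_m\|u_h^m\|_{L^2}^2]$, which the paper's argument (summing expectations and then maximizing) glosses over.
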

	\begin{proof}
		We proceed to prove (a) as follows. Taking $\varphi_h= u_h^{m+1}$ in \eqref{eq_discrete_u} and using the identity $2a(a-b) = a^2 - b^2 + (a-b)^2$, we obtain
		\begin{align}\label{eq_3.6}
			&\frac{1}{2}\bigl[\|u_h^{m+1}\|^2_{L^2} - \|u_h^{m}\|^2_{L^2} + \|u_h^{m+1} - u_h^m\|^2_{L^2}\bigr] + k\|\nab u_h^{m+1}\|^2_{L^2}\\\nonumber
			& = \chi k\bigl(u_h^{m+1}\vsigma_h^{m+1}, \nab u_h^{m+1}\bigr)+ k\bigl(g(u^{m+1}_h), u_h^{m+1}\bigr) +\bigl(B(u_h^m)\Delta W_m,  u_h^{m+1} - u_h^m\bigr)\\\nonumber
			&\qquad +\bigl(B(u_h^m)\Delta W_m,  u_h^m\bigr)\\\nonumber
			&:= I + II + III + IV.
		\end{align}
		
		First, we notice that 
		\begin{align*}
			II =  k\|u_h^{m+1}\|^2_{L^2} - k\|u_h^{m+1}\|^{q+1}_{L^{q+1}}.
		\end{align*}
		
		Next, using integration by parts and the  Cauchy-Schwarz inequality, we obtain
		\begin{align}\label{eq_3.7}
			I&=-\chi k \bigl((u_h^{m+1})^2, \nab\cdot\vsigma_h^{m+1}\bigr)\\\nonumber
			&\leq \chi k \|u_h^{m+1}\|^2_{L^4}\|\nab\cdot \vsigma_h^{m+1}\|_{L^2}.
		\end{align}
		
		Next, choosing $\vvarphi_h = \vsigma_h^{m+1}$ in \eqref{eq_discrete_sigma}, we obtain
		\begin{align}\label{eq_3.12}
			\|\vsigma_h^{m+1}\|^2_{H^1} \leq \|u_h^{m}\|^2_{L^2},
		\end{align}
		which together with \eqref{eq_3.7} give us that 
		\begin{align}\label{eq_3.8}
			I &\leq   \chi k \|u_h^{m+1}\|^2_{L^4}\|u_h^{m}\|_{L^2}\\\nonumber
			&\leq \chi k |D|^{\frac{q-3}{2(q+1)}}\|u^{m+1}_h\|^2_{L^{q+1}}\|u_h^m\|_{L^{2}}\\\nonumber
			&\leq
			\frac{k}{4}\|u_h^{m+1}\|_{L^{q+1}}^{q+1}
			+k\|u_h^m\|^2_{L^2} + C(\chi,q,D)k,
		\end{align}
		where the second inequality of \eqref{eq_3.8} is obtained by using the H\"older inequality, while the third inequality of \eqref{eq_3.8} is arrived by using the Young inequality.

		Next, we estimate $III$ and $IV$ with the expectation. First, using the independence property of the increments $\Delta W_m$, we immediately have that $\mE\left[IV\right] = 0$. Then, using the Cauchy-Schwarz inequality and the assumption of $B$ as well as the fact that $\mE\left[|\Delta W_m|^2\right] = k$, we obtain
		\begin{align*}
			\mE\left[III\right] &\leq \frac14\mE\left[\|u_h^{m+1} - u_h^m\|^2_{L^2}\right] + c_Bk \mE\left[\|u_h^m\|^2_{L^2}\right].
		\end{align*}
		
		Substituting all the estimates from $I, ..., IV$ into \eqref{eq_3.6} in the expectation, we arrive at
		\begin{align}\label{eq_3.9}
			&	\frac12\mE\left[\|u_h^{m+1}\|^2_{L^2} - \|u_h^m\|^2_{L^2} + \frac12\|u_h^{m+1} - u_h^m\|^2_{L^2}\right]  + \frac{3k}{4}\mE\left[ \|u_h^{m+1}\|^{q+1}_{L^{q+1}}\right] + k\mE\left[\|\nab u_h^{m+1}\|^2_{L^2}\right]\\\nonumber
			&\leq Ck\mE\left[\|u_h^{m+1}\|^2_{L^2} + \|u_h^m\|^2_{L^2}\right] + C(\chi,q,D)k.
		\end{align}
		
		Next, applying the summation $\sum_{m=0}^{\ell}$ to \eqref{eq_3.9} for any $1\leq \ell \leq M-1$, we obtain
		\begin{align}\label{eq_3.13}
			&	\frac12\mE\left[\|u_h^{\ell+1}\|^2_{L^2}\right] + \mE\left[\sum_{m=0}^{\ell}\left(\frac14\|u_h^{m+1} - u_h^m\|^2_{L^2} + \frac{ 3k }{4}\|u_h^{m+1}\|^{q+1}_{L^{q+1}} +  k\|\nab u_h^{m+1}\|^2_{L^2}\right)\right]\\\nonumber
			&\leq \frac{1}{2}\mE\left[\|u_h^0\|^2_{L^2}\right] + C(\chi,q,D)T+ Ck\sum_{m=0}^{\ell}\mE\left[\|u_h^{m+1}\|^2_{L^2} + \|u_h^m\|^2_{L^2}\right].
		\end{align}
		
		With this and using the discrete Gronwall inequality, for sufficiently small $k\in (0,1)$, we obtain the estimate in Part (a) after taking the maximum over $0\leq \ell\leq M-1$.
		
		Lastly, it is clear that Part (b) is obtained by taking $\vvarphi_h= \vsigma_h^{m+1}$ and using Part (a). Similarly, Part (c) is a consequence of Parts (a) and (b). The proof is complete.
	\end{proof}

	\medskip
	
	To derive strong error estimates for the Main Algorithm, we would need to establish the following higher moment stability estimates for $u_h^m$. 
	
	\begin{lemma}\label{lemma_fem_highmoment_stability} Let $(u^m_h,\vsigma_h^m,v^m_h)$ be the solution of the Main Algorithm. Assume that $u_0 \in L^{2^p}(\Ome; L^2(D))$ for all $p\in \mathbb{N}$. Let $B$ satisfy \eqref{Assump_LinearGrowth} with $m =0$. Then, there exist a constant $\bar{C}_{1,p}>0$ and $k_0>0$ such that for all $k\in(0,k_0)$
	\begin{align}\label{higher_moment_stability}
		\mE\left[\max_{1\leq m \leq M}\|u_h^m\|^{2^p}_{L^2} \right] \leq \bar{C}_{1,p}
	\end{align}
	\end{lemma}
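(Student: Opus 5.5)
The proof will be an induction on $p\in\mathbb N$. The base case $p=1$ is Lemma~\ref{lemma_fem_stability}(a). The step uses the one-step energy identity \eqref{eq_3.6}, rearranged so that it has the form $a_{m+1}-a_m\le$ (something). Multiplying by a power of $a_m$ or $a_{m+1}$ then doubles the moment. Concretely, from the pathwise bounds of $I$ and $II$ (see \eqref{eq_3.8}), the increment $a_m:=\|u_h^m\|_{L^2}^2$ satisfies, $\mP$-a.s.,
\begin{align*}
a_{m+1}-a_m+\tfrac12\|u_h^{m+1}-u_h^m\|_{L^2}^2+k\|\nab u_h^{m+1}\|_{L^2}^2+\tfrac{k}{2}\|u_h^{m+1}\|_{L^{q+1}}^{q+1}
\le Ck(a_m+1)+2\bigl(B(u_h^m)\Delta W_m,u_h^{m+1}-u_h^m\bigr)+2\bigl(B(u_h^m)\Delta W_m,u_h^m\bigr).
\end{align*}
Here $k a_{m+1}$ has already been absorbed by the $L^{q+1}$ term via Young's inequality, as in \eqref{eq2.8}. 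The third term on the right is handled by Young's inequality: $\le \tfrac14\|u_h^{m+1}-u_h^m\|^2_{L^2}+4c_B^2(1+a_m)|\Delta W_m|^2$.

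To pass from the $2^{p-1}$-th moment to the $2^p$-th, I will multiply this inequality by $a_{m+1}^{2^{p-1}-1}$. Equivalently, and more cleanly, I will square the inequality repeatedly, using $(b-a)b\ge\frac12(b^2-a^2)$. For the step $p=1\to p=2$, multiply by $a_{m+1}$ and use $a_{m+1}(a_{m+1}-a_m)=\frac12(a_{m+1}^2-a_m^2+(a_{m+1}-a_m)^2)$. The noise contribution then splits as $a_{m+1}\,\xi_m=a_m\xi_m+(a_{m+1}-a_m)\xi_m$, with $\xi_m:=2(B(u_h^m)\Delta W_m,u_h^m)$. The piece $a_m\xi_m$ is a martingale increment: its expectation vanishes, and after summing, its maximum over $m$ is controlled by the discrete Burkholder--Davis--Gundy inequality by
\begin{align*}
C\,\mE\Bigl[\Bigl(k\sum_m a_m^2(1+a_m)a_m\Bigr)^{1/2}\Bigr]\le\tfrac14\mE\bigl[\max_m a_m^2\bigr]+Ck\sum_m\mE\bigl[(1+a_m)^2\bigr],
\end{align*}
where \eqref{Assump_LinearGrowth} with $m=0$ gives $|\xi_m|\le C(1+a_m)^{1/2}a_m^{1/2}|\Delta W_m|$. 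The cross piece $(a_{m+1}-a_m)\xi_m$ is absorbed by Young's inequality into $\frac12(a_{m+1}-a_m)^2$ plus $C\xi_m^2$. In expectation, $C\xi_m^2$ is bounded by $Ck\,\mE[(1+a_m)^2]$, because $\Delta W_m$ is independent of $\mathcal F_{t_m}$ and $\mE|\Delta W_m|^4=3k^2$. The deterministic terms $Ck(a_m+1)a_{m+1}$ and $(1+a_m)|\Delta W_m|^2a_{m+1}$ are treated in the same way: split $a_{m+1}=a_m+(a_{m+1}-a_m)$, then use independence, then Young's inequality. The pure $a_{m+1}^2$ contributions carry a factor $k$, and these are the reason for the restriction $k<k_0$.

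Summing over $m=0,\dots,\ell$ and taking the maximum over $\ell$ before the expectation then gives
\begin{align*}
\mE\bigl[\max_{m\le M}a_m^2\bigr]\le C\,\mE[a_0^2]+C+Ck\sum_{m=0}^{M}\mE\bigl[\max_{j\le m}a_j^2\bigr].
\end{align*}
The discrete Gronwall inequality then yields \eqref{higher_moment_stability} for $p=2$. For general $p$, the identical argument applies to the inequality satisfied by $a_m^{2^{p-1}}$ at the previous level, which has the same structure (increment $\le Ck(1+a_m^{2^{p-1}})+$ martingale increment $+$ quadratic noise terms), with $a_m$ replaced by $a_m^{2^{p-1}}$. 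This is why the moments come in powers $2^p$ and why $u_0\in L^{2^p}(\Omega;L^2(D))$ is needed.

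The main obstacle is the noise term $(B(u_h^m)\Delta W_m,u_h^{m+1}-u_h^m)$ and the induced cross terms. They are not martingale increments, and at higher levels they generate products of $(a_{m+1}-a_m)$ with powers of $|\Delta W_m|$. My plan is to always expand $a_{m+1}$ around $a_m$, absorb every factor of $(a_{m+1}-a_m)^2$ into the numerical dissipation produced by the identity $2b(b-a)=b^2-a^2+(b-a)^2$, and bound the remainders using $\mE[|\Delta W_m|^{2j}\mid\mathcal F_{t_m}]=C_jk^j$. I also have to make sure that the maximum over $m$ is taken only after the martingale part is isolated, so that the BDG step closes.
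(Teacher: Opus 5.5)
Your proposal is correct and follows essentially the same route as the paper. Both arguments multiply the pathwise one-step inequality \eqref{eq3.16} by $\|u_h^{m+1}\|_{L^2}^2$, write $a_{m+1}=a_m+(a_{m+1}-a_m)$ so that the noise splits into a martingale increment plus cross terms absorbed into $(a_{m+1}-a_m)^2$, use independence and the Gaussian moments of $\Delta W_m$, close with the discrete Gronwall inequality for small $k$, and iterate by doubling the exponent as in Brze\'zniak--Carelli--Prohl. The one substantive difference is your discrete Burkholder--Davis--Gundy step, which the statement as written actually needs: the paper takes expectations before maximizing, so its argument only yields $\max_{\ell}\mE[\|u_h^{\ell}\|_{L^2}^{4}]$ rather than $\mE[\max_{\ell}\|u_h^{\ell}\|_{L^2}^{4}]$.
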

	\begin{proof} 
	First, the case when $p = 1$ was established in Lemma \ref{lemma_fem_stability} (a). Next, we give the proof for the case $p = 2$. 
	
	Recall that the inequality \eqref{eq_3.9} without the expectation is
	\begin{align}\label{eq3.16}
			&	\frac12\left[\|u_h^{m+1}\|^2_{L^2} - \|u_h^m\|^2_{L^2}\right] + \frac14\|u_h^{m+1} - u_h^m\|^2_{L^2} + \frac{3k}{4}\|u_h^{m+1}\|^{q+1}_{L^{q+1}} + k\|\nab u_h^{m+1}\|^2_{L^2}\\\nonumber
		&\leq k\left[\|u_h^{m+1}\|^2_{L^2} + \|u_h^m\|^2_{L^2}\right] + \|B(u_h^m)\|^2_{L^2}|\Delta W_m|^2 + \bigl(B(u_h^m)\Delta W_m,  u_h^m\bigr)+ C(\chi,q,D)k.
	\end{align}
	
	Multiplying \eqref{eq3.16} by $\|u_h^{m+1}\|^2_{L^2}$ and using the identity $2a(a-b) = a^2 - b^2 + (a-b)^2$, we obtain
	\begin{align}\label{eq3.12}
		&\frac{1}{4}\left[\|u_h^{m+1}\|^4_{L^2} - \|u_h^m\|^4_{L^2}\right] + \frac14 \left(\|u_h^{m+1}\|^2_{L^2} - \|u_h^m\|^2_{L^2}\right)^2+ \frac14\|u_h^{m+1} - u_h^m\|^2_{L^2}\|u_h^{m+1}\|^2_{L^2}  \\\nonumber
		&\qquad+  \frac{3k}{4}\| u_h^{m+1}\|^{q+1}_{L^{q+1}}\|u_h^{m+1}\|^2_{L^2} + k \|\nab u_h^{m+1}\|^2_{L^2}\|u_h^{m+1}\|^2_{L^2}\\\nonumber
		&\leq  k\left[\|u_h^{m+1}\|^2_{L^2} + \|u_h^m\|^2_{L^2}\right]\|u^{m+1}_h\|^2_{L^2} + c_B^2\|u_h^m\|^2_{L^2}|\Delta W_m|^2\|u_h^{m+1}\|^2_{L^2} \\\nonumber
		&\qquad+ \bigl(B(u_h^m)\Delta W_m,  u_h^m\bigr)\|u_h^{m+1}\|^2_{L^2}+ C(\chi,q,D)k\|u_h^{m+1}\|^2_{L^2}\\\nonumber
		&\leq  k\|u_h^{m+1}\|^4_{L^2}+ \frac{k}{4} \|u_h^m\|^4_{L^2} + k\|u^{m+1}_h\|^4_{L^2} + \frac{c_B^4}{4k}\|u_h^m\|^4_{L^2}|\Delta W_m|^4 + k\|u_h^{m+1}\|^4_{L^2} \\\nonumber
		&\qquad+ \bigl(B(u_h^m)\Delta W_m,  u_h^m\bigr)\|u_h^{m}\|^2_{L^2}+ \frac18\left(\|u_h^{m+1}\|^2_{L^2} - \|u_h^m\|^2_{L^2}\right) +2\|B(u_h^m)\|^2_{L^2}\|u_h^m\|^2_{L^2}|\Delta W_m|^2\\\nonumber
		&\qquad+  C(\chi,q,D)k\|u_h^{m+1}\|^2_{L^2}\\\nonumber
		&\leq  k\|u_h^{m+1}\|^4_{L^2}+ \frac{k}{4} \|u_h^m\|^4_{L^2} + k\|u^{m+1}_h\|^4_{L^2} + \frac{c_B^4}{4k}\|u_h^m\|^4_{L^2}|\Delta W_m|^4 + k\|u_h^{m+1}\|^4_{L^2} \\\nonumber
		&\qquad+ \bigl(B(u_h^m)\Delta W_m,  u_h^m\bigr)\|u_h^{m}\|^2_{L^2}+ \frac18\left(\|u_h^{m+1}\|^2_{L^2} - \|u_h^m\|^2_{L^2}\right) +2c_B^2\|u_h^m\|^4_{L^2}|\Delta W_m|^2\\\nonumber
		&\qquad+  \frac{k}{4}C^2(\chi,q,D) + k\|u_h^{m+1}\|^4_{L^2},
	\end{align}
	where the last inequality above is obtained by using the Young inequality and \eqref{Assump_LinearGrowth} with $m=0$. Next, applying the summation $\sum_{m = 0}^{\ell}$ to \eqref{eq3.12} and then taking the expectation, we get
	\begin{align}\label{eq_3.18}
		&\frac{1}{4}\mE\left[\|u_h^{\ell+1}\|^4_{L^2}\right] + \frac18 \sum_{m = 0}^{\ell}\mE\left[\left(\|u_h^{m+1}\|^2_{L^2} - \|u_h^m\|^2_{L^2}\right)^2 \right]\\\nonumber
		&\leq \frac14 \mE\left[\|u_h^0\|^4_{L^2}\right]  + 4k\sum_{m = 0}^{\ell}\mE\left[\|u_h^{m+1}\|^4_{L^2}\right] + k\sum_{m = 0}^{\ell}\left(\frac14 + \frac{3c_B^4}{4} + 2c_B^2\right)\mE\left[\|u_h^m\|^4_{L^2}\right] +  \frac{T}{4}C^2(\chi,q,D),
	\end{align}
	where the last inequality of \eqref{eq_3.18} is obtained by using the facts that $\mE\left[\bigl(B(u_h^m)\Delta W_m,  u_h^m\bigr)\|u_h^{m}\|^2_{L^2}\right]=0, \mE[|\Delta W_m|^4] = 3k^2$ and $\mE[|\Delta W_m|^2] = k$.
	
	Then, taking applying the discrete Gronwall inequality to \eqref{eq_3.18} for sufficiently small $k$, we obtain 
	\begin{align}\label{eq3.200}
		&\mE\left[\|u_h^{\ell+1}\|^4_{L^2} \right]+  \sum_{m = 0}^{\ell}\mE\left[\left(\|u_h^{m+1}\|^2_{L^2} - \|u_h^m\|^2_{L^2}\right)^2\right] \leq C\mE\left[\|u_h^0\|^4_{L^2}\right] e^{CT}.
	\end{align}
	
Lastly, to establish \eqref{higher_moment_stability} for $p>2$, we use an inductive argument as it was done in the proof of \cite[Lemma 3.1]{brzezniak2013finite}, starting with multiplying the error inequality \eqref{eq3.16} by $\|u_h^{m+1}\|^{2^j}_{L^2}$ (assuming \eqref{higher_moment_stability} holds for $p=j$),
and utilize the discrete Gronwall's inequality as shown for the case $p=2$. We skip the derivation to save space and leave it to the interested reader to complete.

\end{proof}

	\medskip
	
	To obtain the strong error estimates, we need to derive the following discrete exponential stability for $u_h^m$.

\begin{lemma}\label{lemma_exp_fem_stability}
	Let $\{u_h^m\}_{m=0}^M$ be the solution of the Main Algorithm. Assume that $B$ satisfies \eqref{Assump_bounded}, and let $\gamma_2>0$ be arbitrary. Assume that the initial approximation $u_h^0$ satisfies
	\[
	\mE\left[
	\exp\left(
	4\gamma_2\|u_h^0\|_{L^2}^2
	+8\gamma_2^2L_0^2k\|u_h^0\|_{L^2}^2
	\right)
	\right]<\infty.
	\]
	Then, for any $T>0$, there exist constants $k_0>0$ and $C_{exp,2}>0$, independent of $h$, $k$, and $M$, such that, for every $0<k\leq k_0$,
	\begin{align}\label{ineq_exp_fem_stability}
		\max_{1\leq\ell\leq M}
		\mE\left[
		\exp\left(
		\gamma_2\|u_h^\ell\|_{L^2}^2
		+\gamma_2 k\sum_{m=1}^{\ell}\|\nab u_h^m\|_{L^2}^2
		+\gamma_2 k\sum_{m=1}^{\ell}\|u_h^m\|_{L^{q+1}}^{q+1}
		\right)
		\right]
		\leq C_{exp,2}.
	\end{align}
	Here, $k_0$ can be chosen sufficiently small such that
$k_0<\frac{1}{32L_0^2\gamma_2},$
	and
	\[
	C_{exp,2}
	=
	\left(
	\mE\left[
	\exp\left(
	4\gamma_2\|u_h^0\|_{L^2}^2
	+C(\chi,q,L_0,\gamma_2,D)T
	\right)
	\right]
	\right)^{1/2}
	\exp\left(4L_0^2\gamma_2T\right).
	\]
\end{lemma}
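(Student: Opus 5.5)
The plan is to imitate the continuous argument of Lemma~\ref{lemma_expo_moment_H} at the discrete level, starting from the one-step energy inequality obtained in the proof of Lemma~\ref{lemma_fem_stability}. Take $\varphi_h=u_h^{m+1}$ in \eqref{eq_discrete_u} and bound the chemotactic term $I$ as in \eqref{eq_3.8}, using \eqref{eq_3.12}. Absorb $k\|u_h^{m+1}\|_{L^2}^2+k\|u_h^m\|_{L^2}^2$ into $\tfrac14 k\|u_h^{m+1}\|_{L^{q+1}}^{q+1}$ plus a constant times $k$ by Young's inequality. The term $k\|u_h^m\|_{L^2}^2$ carries index $m$, so it is absorbed at the previous step after summation; the first step is handled by the initial datum. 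Treat the noise term $III$ with the bounded assumption \eqref{Assump_bounded} rather than \eqref{Assump_LinearGrowth}:
\[
\bigl(B(u_h^m)\Delta W_m,u_h^{m+1}-u_h^m\bigr)\le \tfrac14\|u_h^{m+1}-u_h^m\|_{L^2}^2+L_0^2|\Delta W_m|^2 .
\]
Summing from $m=0$ to $\ell-1$ and multiplying by $2\gamma_2$ then gives, pathwise,
\[
\gamma_2\|u_h^\ell\|_{L^2}^2+\gamma_2k\sum_{m=1}^{\ell}\bigl(\|\nab u_h^m\|_{L^2}^2+\|u_h^m\|_{L^{q+1}}^{q+1}\bigr)\le \gamma_2\|u_h^0\|_{L^2}^2+C\gamma_2T+2\gamma_2L_0^2\sum_{m=0}^{\ell-1}|\Delta W_m|^2+\mathcal M_\ell .
\]
Here $\mathcal M_\ell:=2\gamma_2\sum_{m<\ell}(B(u_h^m),u_h^m)\Delta W_m$ is a discrete martingale. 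A leftover dissipation term of the form $\tfrac{\gamma_2}{2}k\sum\|u_h^m\|_{L^{q+1}}^{q+1}$ (or a multiple of $k\sum\|u_h^m\|_{L^2}^2$) is kept on the left-hand side.

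Next, exponentiate and split the expectation with Cauchy--Schwarz. This produces three factors: the initial/deterministic factor, the factor $\mathbb E\exp(c\gamma_2L_0^2\sum|\Delta W_m|^2)$, and a martingale factor. The quadratic-increment factor is computed explicitly. By independence,
\[
\mathbb E\,e^{a|\Delta W_m|^2}=(1-2ak)^{-1/2}\le e^{2ak}\qquad\text{for } ak\le \tfrac14 ,
\]
so the product over $m$ is bounded by $\exp(CL_0^2\gamma_2T)$. This is where the restriction $k_0<1/(32L_0^2\gamma_2)$ enters, and it accounts for the factor $\exp(4L_0^2\gamma_2T)$ in $C_{exp,2}$.

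For the martingale factor, use the Gaussian identity
\[
\mathbb E\bigl[\exp(\lambda X\Delta W_m)\,\big|\,\mathcal F_{t_m}\bigr]=\exp\bigl(\tfrac{\lambda^2}{2}X^2k\bigr),\qquad X=(B(u_h^m),u_h^m).
\]
Consequently $\exp\bigl(2\mathcal M_\ell-8\gamma_2^2k\sum_{m<\ell}(B(u_h^m),u_h^m)^2\bigr)$ is a supermartingale (in fact a martingale) with expectation at most $1$. The compensator satisfies $8\gamma_2^2k\sum_{m<\ell}(B(u_h^m),u_h^m)^2\le 8\gamma_2^2L_0^2k\sum_{m<\ell}\|u_h^m\|_{L^2}^2$. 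Its $m=0$ term is precisely the extra $8\gamma_2^2L_0^2k\|u_h^0\|_{L^2}^2$ appearing in the hypothesis on the initial datum. The terms with $m\ge1$ are absorbed by Young's inequality, $8\gamma_2^2L_0^2\|u\|_{L^2}^2\le \tfrac{\gamma_2}{4}\|u\|_{L^{q+1}}^{q+1}+C(\gamma_2,L_0,q,D)$, into the retained $L^{q+1}$ dissipation, exactly as in \eqref{ineq:quadratic_variation}.

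The main obstacle is the bookkeeping of constants between the Cauchy--Schwarz splits. The doubled exponent used in each factor must still be dominated by the dissipation kept on the left, so in the first inequality I would retain enough of the $L^{q+1}$ term (for instance with coefficient $2\gamma_2$ rather than $\gamma_2$). An alternative is to apply the splitting with a three-way H\"older inequality using exponents $(2,4,4)$. With either choice the initial factor becomes $\mathbb E\exp(4\gamma_2\|u_h^0\|_{L^2}^2+\cdots)$, consistent with the stated assumption. Finally, take the maximum over $\ell$; the resulting bound is uniform in $h$, $k$ and $M$.
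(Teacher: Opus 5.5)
Your proposal is correct and follows essentially the same route as the paper. The shared steps are: the pathwise summed energy inequality with \eqref{Assump_bounded} applied to the It\^o-correction term; the compensator of the noise martingale absorbed into the $L^{q+1}$ dissipation by Young's inequality, with its $m=0$ term producing the $8\gamma_2^2L_0^2k\|u_h^0\|_{L^2}^2$ in the hypothesis; the explicit Gaussian computation of $\mE[e^{8L_0^2\gamma_2|\Delta W_m|^2}]$ under $k<1/(32L_0^2\gamma_2)$; and the three-way H\"older split with exponents $(2,4,4)$. The only cosmetic difference is that the paper realizes the discrete martingale as an It\^o integral with piecewise-constant integrand and invokes the continuous exponential martingale, where you use the conditional Gaussian identity directly. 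The ``doubled exponent'' bookkeeping you flag is not a real obstacle. The superquadratic growth of $\|u\|_{L^{q+1}}^{q+1}$ relative to $\|u\|_{L^2}^2$ lets any positive leftover fraction of the dissipation absorb the compensator.
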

	\begin{proof}
	This proof follows the same lines the proof of \cite[Lemma 4.3]{nguyen2026finite}. However, for completeness, we also give the detailed proof here.	From \eqref{eq_3.9}, (without the expectation), and the assumption \eqref{Assump_bounded}, we have
		\begin{align}
		&	\|u_h^{m}\|^2_{L^2} - \|u_h^{m-1}\|^2_{L^2} + k \|\nab u_h^{m}\|^2_{L^2} +  \frac{3k}{2} \| u_h^{m}\|^{q+1}_{L^{q+1}} 
		\\\nonumber	&\qquad\leq 2\|B(u_h^{m-1})\|^2_{L^2}|\Delta W_{m-1}|^2 + 2(B(u_h^{m-1})\Delta W_{m-1}, u_h^{m-1}) + C(\chi,q,D)k
		\\\nonumber	&\qquad\leq 2L_0^2|\Delta W_{m-1}|^2 + 2(B(u_h^{m-1})\Delta W_{m-1}, u_h^{m-1}) + C(\chi,q,D)k.
		\end{align}
		Applying the summation $ \sum_{m=1}^{\ell}$ for any $1 \leq \ell \leq M$ produces
		\begin{align}\label{equu3.18}
			&\|u_h^{\ell}\|^2_{L^2}  + k \sum_{m = 1}^{\ell}\|\nab u_h^{m}\|^2_{L^2} + \frac{3k}{2} \sum_{m = 1}^{\ell}\| u_h^{m}\|^{q+1}_{L^{q+1}}\\\nonumber 
			&\qquad\leq \|u_h^0\|^2_{L^2} +C(\chi,q,D)T + 2L_0^2\sum_{m = 1}^{\ell}|\Delta W_{m-1}|^2 + 2\sum_{m = 1}^{\ell}(B(u_h^{m-1})\Delta W_{m-1}, u^{m-1}_h).
		\end{align}
		
		For any $\gamma > 0$, denote $Z_{\ell} =2\gamma\sum_{m=1}^{\ell}\bigl(B(u_h^{m-1})\Delta W_{m-1}, u^{m-1}\bigr)$, for $1\leq \ell\leq M$. We introduce a martingale by using a piece-wise constant function that is defined as follows: for $s \in [t_{m-1},t_{m})$ with $m =1,2,\cdots, M$, and the convention $t_0=0$, set $\bar{u}(s)= u_h^{m-1}$ and
		\begin{align}
			\tilde{Z}_{t} = 2\gamma\int_{0}^{t}\bigl(B(\bar{u}(s)), \bar{u}(s)\bigr)\,d W(s),   \qquad\forall t\in [0,T].
		\end{align}
		Observe that $\displaystyle Z_{\ell} = \tilde{Z}_{t_{\ell}}$ and that $\tilde{Z}_t$ is a $\mathcal{F}_t$-martingale for $t \in [0,T]$ whose quadratic variation satisfies
		\begin{align}\label{equu3.21}
			\bigl<\tilde{Z}\bigr>_{t_{\ell}}  &=4\gamma^2 \int_{t_0}^t |(B(\bar{u}(s)), \bar{u}(s))|^2\, ds\\\nonumber
			&\leq 4\gamma^2\int_{t_0}^{t_{\ell}} \|B(\bar{u}(s))\|^2_{L^2}\|\bar{u}(s)\|^2_{L^2}\, ds\\\nonumber
			&\leq 4\gamma^2L_0^2\, k\sum_{m=1}^{\ell}\|u_h^{m-1}\|^2_{L^2}\\\nonumber
			&\leq \frac{\gamma}{4}\, k\sum_{m=1}^{\ell}\|u_h^{m-1}\|^{q+1}_{L^{q+1}} + C(\gamma,L_0,D) T + 4\gamma^2 L_0^2k\|u_h^0\|^2_{L^2}.
		\end{align}
		We multiply \eqref{equu3.18} by $\gamma >0$ and then take the exponential on both sides to get
		\begin{align}\label{equu323}
			&\exp\Bigl(\gamma\|u_h^{\ell}\|^2_{L^2}  +\gamma k \sum_{m = 1}^{\ell}\|\nab u_h^{m}\|^2_{L^2} + \gamma k \sum_{m = 1}^{\ell}\| u_h^{m}\|^{q+1}_{L^{q+1}}\Bigr) \\\nonumber
			&\leq \exp\bigl(2\gamma\|u_h^0\|^2_{L^2} + C(\chi,q,D)T\bigr)\times \exp\Bigl(2L_0^2\gamma\sum_{m=1}^{\ell}|\Delta W_{m-1}|^2\Bigr)\\\nonumber
			&\qquad\times\exp\Bigl(\bigl[Z_{t_{\ell}} -2\bigl<\tilde{Z}\bigr>_{t_{\ell}}\bigr]\Bigr)\times \exp\biggl(2\bigl<\tilde{Z}\bigr>_{t_{\ell}} -  \frac{\gamma}{2}k\sum_{n=1}^{\ell}\|u_h^n\|^{q+1}_{L^{q+1}}\biggr).
		\end{align} 
		Moreover, from \eqref{equu3.21} we have
		\begin{align}\label{equu3.24}
			2\bigl<\tilde{Z}\bigr>_{t_{\ell}} -  \frac{\gamma}{2}k\sum_{n=1}^{\ell}\|u_h^n\|^{q+1}_{L^{q+1}}
			&\leq C(\gamma,L_0,D)T + 4\gamma^2 L_0^2k\|u_h^0\|^2_{L^2}.
		\end{align}	
		
		Next, taking the expectation on \eqref{equu323} and using \eqref{equu3.21} we obtain
		\begin{align}\label{eq4.25}
		&	\mE\left[\exp\left(\gamma\|u_h^{\ell}\|^2_{L^2}  +\gamma k \sum_{m = 1}^{\ell}\|\nab u_h^{m}\|^2_{L^2} + \gamma k \sum_{m = 1}^{\ell}\| u_h^{m}\|^{q+1}_{L^{q+1}}\right)\right]\\\nonumber
			&\leq \mE\left[\exp\left(2\gamma\|u_h^0\|^2_{L^2} + 4\gamma^2 L_0^2k\|u_h^0\|^2_{L^2} + C(\chi,q,L_0,\gamma,D)T\right)\right.\\\nonumber
			&\qquad \left.\times \exp\Bigl(2L_0^2\gamma\sum_{m=1}^{\ell}|\Delta W_{m-1}|^2\Bigr) \times\exp\Bigl(\bigl[Z_{t_{\ell}} -2\bigl<\tilde{Z}\bigr>_{t_{\ell}}\bigr]\Bigr)\right].
		\end{align}
		Using the H\"older inequality to separate the product on the right-hand side of \eqref{eq4.25} yields
		\begin{align}\label{equu3.28}
		&	\mE\left[\exp\left(\gamma\|u_h^{\ell}\|^2_{L^2}  +\gamma k \sum_{m = 1}^{\ell}\|\nab u_h^{m}\|^2_{L^2} + \gamma k \sum_{m = 1}^{\ell}\| u_h^{m}\|^{q+1}_{L^{q+1}}\right)\right]\\\nonumber
			&\leq \Bigl(\mE\left[\exp\left(4\gamma\|u_h^0\|^2_{L^2} + 8\gamma^2 L_0^2k\|u_h^0\|^2_{L^2}+  C(\chi,q,L_0,\gamma,D)T\right)\right]\Bigr)^{1/2}\\\nonumber
			&\qquad\times \biggl(\mE\Bigl[\exp\Bigl(8L_0^2\gamma\sum_{m=1}^M|\Delta W_m|^2\Bigr)\Bigr]\biggr)^{1/4} \times \Bigl(\mE\Bigl[\exp\Bigl(\bigl[4\tilde{Z}_{t_{\ell}} -\frac{1}{2}\bigl<4\tilde{Z}\bigr>_{t_{\ell}}\bigr]\Bigr)\Bigr]\Bigr)^{1/4}.
		\end{align}
		Since $\exp\Bigl(4\tilde{Z}_{t} -\frac{1}{2}\bigl<4\tilde{Z}\bigr>_{t}\Bigr)$ is an exponential martingale on $[0,T]$, it suffices to show that the expectation $\mE\Bigl[\exp\Bigl(8L_0^2\gamma\sum_{m=1}^M|\Delta W_m|^2\Bigr)\Bigr]$ is bounded.  To this end, by using the independence of the increments of $W(t)$, we can rewrite
		\begin{align}\label{equation3.23}
			\mE\Bigl[\exp\Bigl(8L_0^2\gamma\sum_{m=1}^M|\Delta W_m|^2\Bigr)\Bigr] &= 
			\prod_{m=1}^M \mE\bigl[e^{8L_0^2\gamma (\Delta W_m)^2}\bigr].
		\end{align}
		Moreover, since $\Delta W_m \sim \mathcal{N}(0,k)$, a simple computation gives
		\begin{align*}
			\mE\bigl[e^{8L_0^2\gamma (\Delta W_m)^2}\bigr] = \frac{1}{\sqrt{1- 16L_0^2\gamma k}},
		\end{align*}
		provided that $1- 16L_0^2\gamma k > 0$. This condition can be fulfilled since the time step $k$ can be chosen as small as possible. For example, we may choose $k < \frac{1}{32 L_0^2\gamma}$. So, $1- 16L_0^2\gamma k \geq \frac12 >0$. Therefore, \eqref{equation3.23} is reduced to
		\begin{align}
			\mE\Bigl[\exp\Bigl(8L_0^2\gamma\sum_{m=1}^M|\Delta W_m|^2\Bigr)\Bigr] &=  (1-16 L_0^2\gamma k)^{\frac{-T}{2k}} = \exp\left(-\frac{T}{2k}\log(1-16L_0^2\gamma k)\right) \leq e^{16 L_0^2\gamma},
		\end{align}
		provide that $k < \frac{1}{32 L_0^2\gamma}$.
		Finally, putting this one into \eqref{equu3.28}, the proof is complete.
		
	\end{proof}
	
	Next, we state and prove the error estimates of the Main Algorithm. 
	
	\subsection{Error estimates} In this part, we analyze and derive the error estimates for the Main Algorithm.
	
\begin{theorem}\label{Thm_global_error}
	Let $(u,\vsigma,v)$ and $(u_h^m,\vsigma_h^m,v_h^m)$ be the variational solution of \eqref{Weak_formulation} and the approximate solution generated by the Main Algorithm, respectively. Assume that $B$ satisfies \eqref{Assump_Lipschitz}, \eqref{Assump_LinearGrowth}, and \eqref{Assump_bounded}. Assume that $u_0\in L^4(\Omega;H^2(D))$ and that
	\[
	\mE\left[
	\exp\left(
	\gamma\|u_0\|_{L^2}^2
	\right)
	\right]<\infty
	\qquad\text{for some }\gamma>0.
	\]
	Let $0<r<1$. Then there exist constants $k_0>0$ and $\widehat{C}_1>0$, where $k_0$ is chosen according to Lemma~\ref{lemma_exp_fem_stability} and $\widehat{C}_1$ is independent of $h$ and $k$, such that, for every $0<k\leq k_0$,
	\begin{align}
		\label{estimate_u_fem}
		\left(
		\mE\left[
		\max_{1\leq m\leq M}
		\|u(t_m)-u_h^m\|_{L^2}^{2r}
		\right]
		\right)^{\frac{1}{2r}}
		&+
		\left(
		\mE\left[
		\left(
		k\sum_{m=1}^{M}
		\|\nab(u(t_m)-u_h^m)\|_{L^2}^2
		\right)^r
		\right]
		\right)^{\frac{1}{2r}}
		\\\nonumber
		&\leq \widehat{C}_1\left(k^{\frac12}+h\right),\\
		\label{estimate_sigma_fem}
		\left(
		\mE\left[
		\max_{1\leq m\leq M}
		\|\vsigma(t_m)-\vsigma_h^m\|_{H^1}^{2r}
		\right]
		\right)^{\frac{1}{2r}}
		&\leq \widehat{C}_1\left(k^{\frac12}+h\right),\\
		\label{estimate_c_fem}
		\left(
		\mE\left[
		\max_{1\leq m\leq M}
		\|v(t_m)-v_h^m\|_{L^2}^{2r}
		\right]
		\right)^{\frac{1}{2r}}
		&\leq \widehat{C}_1\left(k^{\frac12}+h\right).
	\end{align}
\end{theorem}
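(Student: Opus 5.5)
We split the error with the projections: $e_u^m:=u(t_m)-u_h^m=\eta_u^m+\xi_u^m$, where $\eta_u^m:=u(t_m)-\cP_u u(t_m)$ and $\xi_u^m:=\cP_u u(t_m)-u_h^m\in X_u^h$. Analogously, $\vsigma(t_m)-\vsigma_h^m=\eta_{\vsigma}^m+\xi_{\vsigma}^m$ with $\cP_{\vsigma}$, and $v(t_m)-v_h^m=\eta_v^m+\xi_v^m$ with $\cP_c$. The projection parts are handled by \eqref{ineq_interpolation} together with the moment bounds of Lemma~\ref{Stability_PDE}(d),(f), using $\|\vsigma\|_{H^2}\le C\|u\|_{H^1}$ from \eqref{elliptic_estimate_higher}. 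These give $O(h)$ in the $H^1$-type norms and $O(h^2)$ in $L^2$, so only the $\xi$-parts need work.

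We first treat the $\vsigma$-equation. Subtract \eqref{eq_discrete_sigma} from \eqref{eq_sigma} at $t_{m+1}$ and note that the scheme uses $u_h^m$ rather than $u(t_{m+1})$. By the definition of $\cP_{\vsigma}$, testing with $\vvarphi_h=\xi_{\vsigma}^{m+1}$ gives
\[
\|\xi_{\vsigma}^{m+1}\|_{H^1}\le \|u(t_{m+1})-u(t_m)\|_{L^2}+\|e_u^m\|_{L^2}.
\]
So the $\vsigma$-error is controlled by the $u$-error at the previous step plus an $O(k^{1/2})$ Hölder term from Lemma~\ref{Lemma_Holder}(a). The $v$-estimate \eqref{estimate_c_fem} then follows from \eqref{eq_c} and \eqref{eq_discrete_c}: the $L^2$ projection gives $\|\xi_v^{m+1}\|_{L^2}\le\|\nab\cdot(\vsigma-\vsigma_h)\|_{L^2}+\|e_u^{m+1}\|_{L^2}$ up to projection terms. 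Everything therefore reduces to \eqref{estimate_u_fem}.

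For $u$, integrate \eqref{eq_u} over $[t_m,t_{m+1}]$, subtract \eqref{eq_discrete_u}, test with $\varphi_h=\xi_u^{m+1}$, and use $\cP_u$ to remove $(\eta_u^{m+1}-\eta_u^m,\xi_u^{m+1})$. This produces:
\begin{itemize}
\item the usual energy terms $\tfrac12(\|\xi_u^{m+1}\|^2-\|\xi_u^m\|^2+\|\xi_u^{m+1}-\xi_u^m\|^2)+k\|\nab\xi_u^{m+1}\|^2$;
\item temporal consistency terms $\int_{t_m}^{t_{m+1}}(\nab(u(s)-u(t_{m+1})),\nab\xi_u^{m+1})\,ds$, plus the analogous chemotaxis and reaction terms, bounded via Lemma~\ref{Lemma_Holder}(a),(b) to give $O(k)$ after summation;
\item a projection term $k(\nab\eta_u^{m+1},\nab\xi_u^{m+1})$, which gives $O(h^2)$;
\item the noise term $\int_{t_m}^{t_{m+1}}(B(u(s))-B(u_h^m))\,dW$, split into a martingale part tested against $\xi_u^m$ (mean zero) and a part tested against $\xi_u^{m+1}-\xi_u^m$ (absorbed), with the Lipschitz bound \eqref{Assump_Lipschitz} giving $Ck\|e_u^m\|^2+$ Hölder terms.
\end{itemize}
The reaction difference $g(u(t_{m+1}))-g(u_h^{m+1})$ is favorable. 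Write $-(u^q-u_h^q,\,u-u_h)\le0$ as the monotone part and move the projection pieces to the right; they cost $\|\eta\|$ times polynomial moments of $u$ and $u_h$, which are bounded by Lemmas~\ref{Stability_PDE} and \ref{lemma_fem_highmoment_stability}.

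The main obstacle is the chemotactic difference $\chi k(u(t_{m+1})\vsigma(t_{m+1})-u_h^{m+1}\vsigma_h^{m+1},\nab\xi_u^{m+1})$. Write it as
\[
e_u^{m+1}\vsigma(t_{m+1})+u_h^{m+1}\bigl(\vsigma(t_{m+1})-\vsigma_h^{m+1}\bigr).
\]
The first piece is bounded by $\|\vsigma\|_{L^\infty}\|e_u\|\|\nab\xi_u\|$, using \eqref{sobolev_embedding}. For the second piece, use Ladyzhenskaya together with $\|\vsigma-\vsigma_h\|_{L^4}\le C\|\vsigma-\vsigma_h\|_{H^1}$ and the $\vsigma$-bound above. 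This yields
\[
\tfrac14 k\|\nab\xi_u^{m+1}\|^2+Ck\bigl(1+\|u_h^{m+1}\|_{L^2}^2\|u_h^{m+1}\|_{H^1}^2+\|u(t_{m+1})\|_{L^4}^2\bigr)\bigl(\|\xi_u^{m}\|^2+\|\xi_u^{m+1}\|^2\bigr)
\]
plus consistency terms. The random Gronwall coefficient $a_m:=1+\|u_h^{m+1}\|_{H^1}^2(1+\|u_h^{m+1}\|^2_{L^2})+\|u(t_{m+1})\|_{H^1}^2$ is not bounded, which is why localization was used before. Instead, we apply a discrete stochastic Gronwall lemma: multiply by the weight $\exp(-C k\sum_{j\le m}a_j)$ (or invoke the Scheutzow-type inequality for processes with martingale forcing), which yields bounds in $L^{2r}$ for $r<1$ — hence the restriction $0<r<1$. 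We then finish with Hölder's inequality, using
\[
\mE\bigl[\exp(\beta k\textstyle\sum_j a_j)\bigr]<\infty .
\]
This last bound is exactly what Lemma~\ref{lemma_expo_moment_H} and Lemma~\ref{lemma_exp_fem_stability} provide, with $\gamma_1,\gamma_2$ chosen large relative to $\beta$ and $k_0$ correspondingly small. The awkward factor $\|u_h\|_{L^2}^2$ is handled by the $\max$-moment Lemma~\ref{lemma_fem_highmoment_stability} via Hölder, or absorbed via $\|u_h\|^2_{L^2}\le C\|u_h\|^{q+1}_{L^{q+1}}+C$.

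Summing the error identity, the right-hand side has expectation (after the stochastic Gronwall) of order $k+h^2$, with constants built from Lemma~\ref{Stability_PDE} (requiring $u_0\in L^4(\Omega;H^2)$) and Lemma~\ref{Lemma_Holder}. The $\max_m$ is obtained through BDG on the martingale term inside the stochastic Gronwall lemma. Taking $2r$-th roots gives \eqref{estimate_u_fem}, and then \eqref{estimate_sigma_fem} and \eqref{estimate_c_fem} follow as described in the second paragraph.
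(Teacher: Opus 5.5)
Your architecture matches the paper's: projection splitting, the bound of $\xi_{\vsigma}^{m+1}$ by the previous-step $u$-error, and the Kruse--Scheutzow stochastic Gronwall lemma fed by Lemmas~\ref{lemma_expo_moment_H} and~\ref{lemma_exp_fem_stability}. The gap is the step where you assert $\mE\bigl[\exp\bigl(\beta k\sum_j a_j\bigr)\bigr]<\infty$; it fails for the coefficient you actually produce. Lemma~\ref{lemma_expo_moment_H} controls exponentials only of the time integrals $\int_0^t\|\nab u\|_{L^2}^2\,ds$ and $\int_0^t\|u\|_{L^{q+1}}^{q+1}\,ds$, and of $\|u(t)\|_{L^2}^2$ at a fixed time. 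You bound the first flux piece through $\|\vsigma(t_{m+1})\|_{L^\infty}\le C\|u(t_{m+1})\|_{L^4}$, so your exponent contains the grid-point sum $k\sum_m\|u(t_{m+1})\|_{H^1}^2$ (or $k\sum_m\|u(t_{m+1})\|_{L^4}^2$). Such a Riemann sum is not pathwise dominated by $\int_0^T\|u\|_{H^1}^2\,ds$. Bounding it by Jensen's inequality would require $\sup_t\mE\bigl[\exp\bigl(c\|u(t)\|_{L^4}^2\bigr)\bigr]$, which is proved nowhere. Similarly, Lemma~\ref{lemma_exp_fem_stability} gives no exponential moment for the product $k\sum_m\|u_h^{m}\|_{L^2}^2\|u_h^{m}\|_{H^1}^2$. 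The polynomial moments of Lemma~\ref{lemma_fem_highmoment_stability} cannot be moved inside an exponential by H\"older, and replacing $\|u_h\|_{L^2}^2$ by $C\|u_h\|_{L^{q+1}}^{q+1}+C$ still leaves a product.

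The second problem is self-inflicted. Ladyzhenskaya gives $\|u_h^{m+1}\|_{L^4}^2\le C\|u_h^{m+1}\|_{L^2}\|u_h^{m+1}\|_{H^1}\le C\|u_h^{m+1}\|_{H^1}^2$, which is linear and covered by Lemma~\ref{lemma_exp_fem_stability}.

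The first problem needs the paper's idea. The paper splits the flux as $u(t_{m+1})e_{\vsigma}^{m+1}+e_u^{m+1}\vsigma_h^{m+1}$, so the new-level error is paired with the discrete $\vsigma_h^{m+1}$. After integration by parts and $\|\vsigma_h^{m+1}\|_{H^1}\le\|u_h^m\|_{L^2}$, its coefficient is $k\|u_h^m\|_{L^2}^2$, handled by Jensen and Lemma~\ref{lemma_exp_fem_stability}. The last such term is put into $F_\ell$ and bounded by polynomial moments, thanks to its factor $k$.

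For $k\|u(t_{m+1})\|_{L^4}^2\|\varepsilon_u^m\|_{L^2}^2$, the paper inserts $\pm\frac1k\int_{t_m}^{t_{m+1}}u(s)\,ds$. The main part becomes $\bigl(\int_{t_m}^{t_{m+1}}\|u\|_{L^{q+1}}^{q+1}\,ds+Ck\bigr)\|\varepsilon_u^m\|_{L^2}^2$, whose coefficient has exactly the exponential moment provided by Lemma~\ref{lemma_expo_moment_H}. The remainder $k^{1/2}\bigl(\int_{t_m}^{t_{m+1}}\|u(t_{m+1})-u(s)\|_{L^4}^4\,ds\bigr)^{1/2}\|\varepsilon_u^m\|_{L^2}^2$ goes into $F_\ell$ and is controlled in expectation via Lemma~\ref{Lemma_Holder}.

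Within your own splitting, you could instead avoid $\|\vsigma\|_{L^\infty}$ and integrate by parts: $(\xi_u\vsigma,\nab\xi_u)=-\frac12(\xi_u^2,\nab\cdot\vsigma)$, with $\|\nab\cdot\vsigma\|_{L^2}=\|v-u\|_{L^2}\le 2\|u\|_{L^2}$. This leaves the coefficient $\|u(t_{m+1})\|_{L^2}^2$, whose grid sum is controlled by Jensen and the fixed-time bound of Lemma~\ref{lemma_expo_moment_H}.
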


	\begin{proof} First, we focus on giving the proof of \eqref{estimate_u_fem}. Denote
		\begin{align*}
			e_{u}^m &= u(t_m) - u^m_h  = u(t_m) - \cP_{u} u(t_m) + \cP_{u} u(t_m) - u_h^m:= \theta_{u}^m + \varepsilon_{u}^m  \\
			e_{\vsigma}^m &= \vsigma(t_m) - \vsigma_h^m = \vsigma(t_m) - \cP_{\vsigma}\vsigma(t_m) + \cP_{\vsigma} \vsigma(t_m) - \vsigma_h^m:= \vtheta_{\vsigma}^m + \e_{\vsigma}^m. 
		\end{align*}
		
		Subtract \eqref{eq_u} from \eqref{eq_discrete_u}, we obtain the following error equation.
		\begin{align}\label{eq_3.27}
			\bigl(e_u^{m+1} - e^m_u,\varphi_h\bigr) + k\bigl(\nab e_u^{m+1}, \nab \varphi_h\bigr)
			&= \int_{t_m}^{t_{m+1}} \bigl(\nab u(t_{m+1}) - \nab u(s), \nab \varphi_h\bigr)\, ds \\\nonumber
			&\qquad+ \chi\int_{t_m}^{t_{m+1}} \bigl(u(s)\vsigma(s) - u^{m+1}_h\vsigma^{m+1}_h, \nab \varphi_h\bigr)\, ds \\\nonumber
			&\qquad+\int_{t_m}^{t_{m+1}} \bigl(g(u(s)) - g(u_h^{m+1}), \varphi_h\bigr)\, ds \\\nonumber
			&\qquad+ \Bigl(\int_{t_m}^{t_{m+1}} B(u(s)) dW(s) - B(u_h^m)\Delta W_m, \varphi_h\Bigr).
		\end{align}
		
		Taking $v_h = \varepsilon_u^{m+1}$ in \eqref{eq_3.27} and using the orthogonal property of $\cP_u$ \eqref{Pu}, we obtain the following error equation:
		\begin{align}\label{eq_3.28}
			&\frac12\bigl[\|\varepsilon^{m+1}_u\|^2_{L^2} - \|\varepsilon_u^m\|^2_{L^2}\bigr] +\frac12\|\varepsilon_u^{m+1} - \varepsilon_u^m\|^2_{L^2}+  k\|\nab\varepsilon_{u}^{m+1}\|^2_{L^2}\\\nonumber
			&= - k\bigl(\nab\theta_u^{m+1},\nab\varepsilon_u^{m+1}\bigr) +  \int_{t_m}^{t_{m+1}} \bigl(\nab u(t_{m+1}) - \nab u(s), \nab \varepsilon_u^{m+1}\bigr)\, ds \\\nonumber
			&\qquad+ \chi\int_{t_m}^{t_{m+1}} \bigl(u(s)\vsigma(s) - u^{m+1}_h\vsigma^{m+1}_h, \nab \varepsilon_u^{m+1}\bigr)\, ds \\\nonumber
			&\qquad+\int_{t_m}^{t_{m+1}} \bigl(g(u(s)) - g(u_h^{m+1}), \varepsilon_{u}^{m+1}\bigr)\, ds \\\nonumber
			&\qquad+ \Bigl(\int_{t_m}^{t_{m+1}} B(u(s)) dW(s) - B(u_h^m)\Delta W_m,\varepsilon_{u}^{m+1}\Bigr)\\\nonumber
			&:= I + II + III + IV + V.
		\end{align}
		
		Now, we begin to estimate the right-hand side of \eqref{eq_3.28}. First, using Cauchy-Schwarz's inequality and \eqref{ineq_interpolation}, we get
		\begin{align}
			I &\leq \frac{k}{16} \|\nab \varepsilon_u^{m+1}\|^2_{L^2} + Ck h^2 \|u(t_{m+1})\|^2_{H^2}.
		\end{align}
		
		Using the Cauchy-Schwarz inequality, we obtain
		\begin{align*}
			II &\leq C\int_{t_m}^{t_{m+1}}\|\nab(u(t_{m+1}) - u(s))\|^2_{L^2}\, ds + \frac{ k}{16}\|\nab \varepsilon_u^{m+1}\|^2_{L^2}.
		\end{align*}
		
		Now, we turn to estimate $III$ by adding and subtracting the term $u(t_{m+1})\vsigma(t_{m+1})$:
		\begin{align}
			III &= \chi \int_{t_m}^{t_{m+1}} \bigl(u(s)\vsigma(s) - u(t_{m+1})\vsigma(t_{m+1}),\nab \varepsilon_u^{m+1}\bigr)\, ds \\\nonumber
			&\qquad+ \chi k \bigl(u(t_{m+1})\vsigma(t_{m+1}) - u^{m+1}_h\vsigma^{m+1}_h,\nab \varepsilon_u^{m+1}\bigr)\\\nonumber
			&:= III_1 + III_2,
		\end{align}
		where the terms $III_1$ and $III_2$ are estimated as follows. By using the Cauchy-Schwarz inequality and then the Young inequality, we have
		\begin{align*}
			III_1 &=\chi \int_{t_m}^{t_{m+1}}[\bigl(u(s)[\vsigma(s) - \vsigma(t_{m+1})] + [u(s) - u(t_{m+1})]\vsigma(t_{m+1}), \nab \varepsilon_{u}^{m+1}\bigr)]\, ds \\\nonumber
			&\leq C\int_{t_m}^{t_{m+1}}[\|u(s)[\vsigma(s) - \vsigma(t_{m+1})]\|^2_{L^2} + \|[u(s) - u(t_{m+1})]\vsigma(t_{m+1})\|^2_{L^2}]\, ds \\\nonumber
			&\qquad\qquad\qquad+ \frac{k}{16}\|\nab \varepsilon_u^{m+1}\|^2_{L^2}\\\nonumber
			&\leq C\int_{t_m}^{t_{m+1}}[\|u(s)\|^2_{L^4}\|[\vsigma(s) - \vsigma(t_{m+1})]\|^2_{L^4} + \|[u(s) - u(t_{m+1})]\|^2_{L^4}\|\vsigma(t_{m+1})\|^2_{L^4}]\, ds \\\nonumber
			&\qquad\qquad\qquad+ \frac{k}{16}\|\nab \varepsilon_u^{m+1}\|^2_{L^2}.
		\end{align*}

		Now, we turn to control $III_2$ by adding and subtracting $u(t_{m+1})\vsigma_h^{m+1}$ to get
		\begin{align*}
			III_2 &= \chi k \bigl(u(t_{m+1})e_{\vsigma}^{m+1},\nab \varepsilon_u^{m+1}\bigr) + \chi k \bigl(e_{u}^{m+1}\vsigma_h^{m+1},\nab \varepsilon_u^{m+1}\bigr)\\\nonumber
			&= \chi k \bigl(u(t_{m+1})\varepsilon_{\vsigma}^{m+1},\nab \varepsilon_u^{m+1}\bigr) + \chi k \bigl(\varepsilon_{u}^{m+1}\vsigma_h^{m+1},\nab \varepsilon_u^{m+1}\bigr)\\\nonumber
			&\qquad+ \chi k \bigl(u(t_{m+1})\theta_{\vsigma}^{m+1},\nab \varepsilon_u^{m+1}\bigr) + \chi k \bigl(\theta_{u}^{m+1}\vsigma_h^{m+1},\nab \varepsilon_u^{m+1}\bigr)\\\nonumber
			&:= III_{2,1} + III_{2,2} + III_{2,3} + III_{2,4}.
		\end{align*}	
		
		To estimate $III_{2,1}$, we have to control $\varepsilon_{\vsigma}^{m+1}$ as follows.	Subtracting \eqref{eq_sigma} to \eqref{eq_discrete_sigma}, we obtain
		\begin{align}\label{eq_3.31}
			&	\bigl(e_{\vsigma}^{m+1}, \vvarphi_h\bigr) + \bigl(\nab\cdot e_{\vsigma}^{m+1},\nab\cdot\vvarphi_h\bigr) + \bigl(rot\, e_{\vsigma}^{m+1},\, rot\, \vvarphi_h\bigr) \\\nonumber
			&= -\bigl(u(t_{m+1}) - u(t_m),\nab\cdot\vvarphi_h\bigr) - \bigl(e_{u}^m,\nab\cdot\vvarphi_h\bigr).
		\end{align}
		Taking $\vvarphi_h = \varepsilon_{\vsigma}^{m+\frac12}$ in \eqref{eq_3.31} and using the orthogonality of $\cP_{\vsigma}$ from \eqref{Ps}, we arrive at
		\begin{align}\label{eq3.32}
			\|\varepsilon_{\vsigma}^{m+1}\|^2_{H^1} & \leq C\bigl[\|u(t_{m+1}) - u(t_m)\|^2_{L^2} \bigr] + \frac12\|e_u^{m}\|^2_{L^2}\\\nonumber
			& \leq C\bigl[\|u(t_{m+1}) - u(t_m)\|^2_{L^2} \bigr] + \|\varepsilon_u^{m}\|^2_{L^2} + Ch^4\|u(t_{m})\|^2_{H^2}.
		\end{align}
		
		Using  \eqref{eq3.32} and the Ladyzhenskaya inequality to bound $III_{2,1}$, we obtain 
		\begin{align}\label{eq3.35}
			III_{2,1} &\leq \chi k \|u(t_{m+1})\|_{L^4}\|\varepsilon_{\vsigma}^{m+1}\|_{L^4}\|\nab \varepsilon_{u}^{m+1}\|_{L^2}\\\nonumber
			&\leq \frac{ k}{16}\|\nab \varepsilon_{u}^{m+1}\|^2_{L^2} + 4\chi^2 k\|u(t_{m+1})\|^2_{L^4} \|\varepsilon_{\vsigma}^{m+1}\|^2_{H^1}\\\nonumber
			&\leq  \frac{k}{16}\|\nab \varepsilon_{u}^{m+1}\|^2_{L^2} + Ck\|u(t_{m+1})\|^2_{L^4}\|u(t_{m+1}) - u(t_m)\|^2_{L^2}  \\\nonumber
			&\qquad+ 4\chi^2k\|u(t_{m+1})\|^2_{L^4}\|\varepsilon_u^{m}\|^2_{L^2} + Ckh^4\|u(t_{m+1})\|^2_{L^4}\|u(t_{m})\|^2_{H^2}.
		\end{align}
		
		Now, we can further analyze the third-term on the right-hand side of \eqref{eq3.35} as follows: Adding the terms $\pm\int_{t_{m}}^{t_{m+1}}u(s)\, ds$ and then using the H\"older inequality, we have
		\begin{align}
			4\chi^2k\|u(t_{m+1})\|^2_{L^4}\|\varepsilon_u^{m}\|^2_{L^2} &\leq \frac{8\chi^2}{k} \left\|\int_{t_{m}}^{t_{m+1}}(u(t_{m+1}) - u(s))\, ds\right\|^2_{L^4}\|\varepsilon_{u}^m\|^2_{L^2}\\\nonumber
			&\qquad +  \frac{8\chi^2}{k} \left\|\int_{t_{m}}^{t_{m+1}} u(s)\, ds\right\|^2_{L^4}\|\varepsilon_{u}^m\|^2_{L^2}\\\nonumber
			&\leq 8\chi^2k^{\frac12}\left(\int_{t_{m}}^{t_{m+1}}\left\|(u(t_{m+1}) - u(s))\right\|^4_{L^4}\, ds\right)^{\frac12}\|\varepsilon_{u}^m\|^2_{L^2}\\\nonumber
			&\qquad +  8\chi^2 k^{\frac12}\left(\int_{t_{m}}^{t_{m+1}} \|u(s)\|^4_{L^4}\, ds\right)^{\frac12}\|\varepsilon_{u}^m\|^2_{L^2}\\\nonumber
			&\leq 8\chi^2k^{\frac12}\left(\int_{t_{m}}^{t_{m+1}}\left\|(u(t_{m+1}) - u(s))\right\|^4_{L^4}\, ds\right)^{\frac12}\|\varepsilon_{u}^m\|^2_{L^2}\\\nonumber
			&\qquad +  4\chi^2 \left(k +\int_{t_{m}}^{t_{m+1}} \|u(s)\|^4_{L^4}\, ds\right)\|\varepsilon_{u}^m\|^2_{L^2},
		\end{align}
		which implies that
		\begin{align}\label{eq3.37}
			III_{2,1} 	&\leq  \frac{k}{16}\|\nab \varepsilon_{u}^{m+1}\|^2_{L^2} + Ck\|u(t_{m+1})\|^2_{L^4}\|u(t_{m+1}) - u(t_m)\|^2_{L^2}  \\\nonumber
			&\qquad +8\chi^2k^{\frac12}\left(\int_{t_{m}}^{t_{m+1}}\left\|(u(t_{m+1}) - u(s))\right\|^4_{L^4}\, ds\right)^{\frac12}\|\varepsilon_{u}^m\|^2_{L^2}\\\nonumber
			&\qquad +  4\chi^2 \left(k +\int_{t_{m}}^{t_{m+1}} \|u(s)\|^4_{L^4}\, ds\right)\|\varepsilon_{u}^m\|^2_{L^2}+  Ckh^4\|u(t_{m+1})\|^2_{L^4}\|u(t_{m})\|^2_{H^2}\\\nonumber
				&\leq \frac{k}{16}\|\nab \varepsilon_{u}^{m+1}\|^2_{L^2}
				+ Ck\|u(t_{m+1})\|^2_{L^4}
				\|u(t_{m+1})-u(t_m)\|^2_{L^2}
				\\\nonumber
				&\qquad
				+8\chi^2k^{\frac12}
				\left(
				\int_{t_m}^{t_{m+1}}
				\|u(t_{m+1})-u(s)\|^4_{L^4}\,ds
				\right)^{\frac12}
				\|\varepsilon_u^m\|^2_{L^2}
				\\\nonumber
				&\qquad
				+\frac{16\chi^2}{q+1}
				\left(
				\int_{t_m}^{t_{m+1}}
				\|u(s)\|_{L^{q+1}}^{q+1}\,ds
				\right)
				\|\varepsilon_u^m\|^2_{L^2}
				\\\nonumber
				&\qquad
				+4\chi^2
				\left(
				1+\frac{q-3}{q+1}|D|
				\right)k
				\|\varepsilon_u^m\|^2_{L^2}
				+Ckh^4\|u(t_{m+1})\|^2_{L^4}
				\|u(t_m)\|^2_{H^2}\\\nonumber
				&\leq \frac{k}{16}\|\nab \varepsilon_{u}^{m+1}\|^2_{L^2}
				+ Ck\|u(t_{m+1})\|^2_{L^4}
				\|u(t_{m+1})-u(t_m)\|^2_{L^2}
				\\\nonumber
				&\qquad
				+8\chi^2k^{\frac12}
				\left(
				\int_{t_m}^{t_{m+1}}
				\|u(t_{m+1})-u(s)\|^4_{L^4}\,ds
				\right)^{\frac12}
				\|\varepsilon_u^m\|^2_{L^2}
				\\\nonumber
				&\qquad
				+4\chi^2
				\left(
				\int_{t_m}^{t_{m+1}}
				\|u(s)\|_{L^{q+1}}^{q+1}\,ds
				+(1+|D|)k
				\right)
				\|\varepsilon_u^m\|^2_{L^2}
				+Ckh^4\|u(t_{m+1})\|^2_{L^4}
				\|u(t_m)\|^2_{H^2},
		\end{align}
		where the second inequality of \eqref{eq3.37} is obtained by using the Young inequality.

		Next, using the Cauchy-Schwarz inequality, the Ladyzhenskaya inequality and then \eqref{eq_3.12}, we have
		\begin{align}
			III_{2,2} &=-\chi k ((\varepsilon_{u}^{m+1})^2,\nab\cdot\vsigma_h^{m+1}) \\\nonumber
			&\leq \chi k \|\varepsilon_{u}^{m+1}\|^2_{L^4}\|\vsigma_h^{m+1}\|_{H^1}\\\nonumber
			&\leq C_L^2\chi k \|\varepsilon_{u}^{m+1}\|_{L^2}\|\varepsilon_{u}^{m+1}\|_{H^1}\|u_h^m\|_{L^2}\\\nonumber
			&\leq \frac{ k}{16} \left(\|\varepsilon_{u}^{m+1}\|^2_{L^2} + \|\nab \varepsilon_{u}^{m+1}\|^2_{L^2}\right) + 4C_L^4\chi^2k\|u_h^m\|^2_{L^2}\|\varepsilon_{u}^{m+1}\|^2_{L^2}.
		\end{align}
		
		To estimate the terms $III_{2,3}$ and $III_{2,4}$, we use the Cauchy-Schwarz inequality, the inequality \eqref{ineq_interpolation} as follows:
		\begin{align}
			III_{2,3} + III_{2,4} &\leq \frac{k}{16}\|\nab \varepsilon_{u}^{m+1}\|^2_{L^2} + Ck \|u(t_{m+1})\|^2_{L^4}\|\theta_{\vsigma}^{m+1}\|^2_{L^4} + Ck \|\theta_{u}^{m+1}\|^2_{L^4}\|\vsigma_h^{m+1}\|^2_{L^4}\\\nonumber
			&\leq \frac{k}{16}\|\nab \varepsilon_{u}^{m+1}\|^2_{L^2} + Ckh^3\|u(t_{m+1})\|^2_{L^4}\|\vsigma(t_{m+1})\|^2_{H^2} + Ck h^3\|u_h^{m}\|^2_{L^2}\|u(t_{m+1})\|^2_{H^2}.
		\end{align}

		Next, we proceed to control $IV$ as follows. Using the fact that $(g(a) - g(b))(a-b) \leq (a-b)^2$, we have
		\begin{align}
			IV &= \int_{t_{m}}^{t_{m+1}} \bigl(g(u(s)) - g(u(t_{m+1})), \varepsilon_{u}^{m+1}\bigr)\, ds + k \bigl(g(u(t_{m+1})) - g(\cP_{u}u(t_{m+1})), \varepsilon_{u}^{m+1}\bigr) \\\nonumber
			&\qquad+ k \bigl(g(\cP_{u}u(t_{m+1})) - g(u_h^{m+1}), \varepsilon_{u}^{m+1}\bigr) \\\nonumber
			&\leq C\int_{t_{m}}^{t_{m+1}}\|g(u(s)) - g(u(t_{m+1}))\|^2_{L^2}\, ds + Ck\|g(u(t_{m+1})) - g(\cP_{u}u(t_{m+1}))\|^2_{L^2} + k \|\varepsilon_{u}^{m+1}\|^2_{L^2}.
		\end{align}
		
		Finally, we estimate the noise term $V$ as follows. 
		\begin{align*}
			V &=  \left(\int_{t_m}^{t_{m+1}} (B(u(s)) - B(u_h^m))\, dW(s), \varepsilon_u^{m+1} - \varepsilon_u^m\right) +  \left(\int_{t_m}^{t_{m+1}} (B(u(s)) - B(u_h^m))\, dW(s),  \varepsilon_u^m\right)\\\nonumber
			&=  \left(\int_{t_m}^{t_{m+1}} (B(u(s)) - B(u(t_{m})))\, dW(s), \varepsilon_u^{m+1} - \varepsilon_u^m\right) +\bigl((B(u(t_m)) - B(u_h^m))\Delta W_m, \varepsilon_u^{m+1} - \varepsilon_u^m\bigr) \\\nonumber
			&\qquad+  \left(\int_{t_m}^{t_{m+1}} (B(u(s)) - B(u^m_h))\, dW(s),  \varepsilon_u^m\right)\\\nonumber
			&\leq  2\left\|\int_{t_m}^{t_{m+1}} (B(u(s)) - B(u(t_{m})))\, dW(s)\right\|^2_{L^2} +2\left\|(B(u(t_m)) - B(u_h^m))\Delta W_m\right\|^2_{L^2} + \frac14\|\varepsilon_u^{m+1} - \varepsilon_u^{m}\|^2_{L^2} \\\nonumber
			&\qquad+  \left(\int_{t_m}^{t_{m+1}} (B(u(s)) - B(u^m_h))\, dW(s),  \varepsilon_u^m\right).
		\end{align*}
		
		To apply the stochastic Gronwall inequality in Lemma \ref{Stochastic_Gronwall}, we need to add the terms $\pm2k\|B(u(t_m)) - B(u_h^m)\|^2_{L^2}$ on the right-hand side of $V$, and then using the condition \eqref{Assump_Lipschitz} as follows:
		\begin{align*}
			V &\leq  2\left\|\int_{t_m}^{t_{m+1}} (B(u(s)) - B(u(t_{m})))\, dW(s)\right\|^2_{L^2}  + \frac14\|\varepsilon_u^{m+1} - \varepsilon_u^{m}\|^2_{L^2} \\\nonumber
			&\qquad+2\left\|(B(u(t_m)) - B(u_h^m))\Delta W_m\right\|^2_{L^2} - 2k\|B(u(t_m)) - B(u_h^m)\|^2_{L^2} + 2k\|B(u(t_m)) - B(u_h^m)\|^2_{L^2}\\\nonumber
			&\qquad+  \left(\int_{t_m}^{t_{m+1}} (B(u(s)) - B(u^m_h))\, dW(s),  \varepsilon_u^m\right)\\\nonumber
			&\leq  2\left\|\int_{t_m}^{t_{m+1}} (B(u(s)) - B(u(t_{m})))\, dW(s)\right\|^2_{L^2}  + \frac14\|\varepsilon_u^{m+1} - \varepsilon_u^{m}\|^2_{L^2} \\\nonumber
			&\qquad+2\left\|(B(u(t_m)) - B(u_h^m))\Delta W_m\right\|^2_{L^2} - 2k\|B(u(t_m)) - B(u_h^m)\|^2_{L^2} + 2kc_B^2\|e_u^{m}\|^2_{L^2}\\\nonumber
			&\qquad+  \left(\int_{t_m}^{t_{m+1}} (B(u(s)) - B(u_h^m))\, dW(s),  \varepsilon_u^m\right)\\\nonumber
			&\leq  2\left\|\int_{t_m}^{t_{m+1}} (B(u(s)) - B(u(t_{m})))\, dW(s)\right\|^2_{L^2}  + \frac14\|\varepsilon_u^{m+1} - \varepsilon_u^{m}\|^2_{L^2} \\\nonumber
			&\qquad+2\left\|(B(u(t_m)) - B(u_h^m))\Delta W_m\right\|^2_{L^2} - 2k\|B(u(t_m)) - B(u_h^m)\|^2_{L^2} \\\nonumber
			&\qquad+ 2kc_B^2\|\varepsilon_u^{m}\|^2_{L^2} + 2kc_B^2C h^{4}\|u(t_m)\|^2_{H^{2}}\\\nonumber
			&\qquad+  \left(\int_{t_m}^{t_{m+1}} (B(u(s)) - B(u^m_h))\, dW(s),  \varepsilon_u^m\right).
		\end{align*}
		
		Now, collecting and substituting all of the estimates on $I, II, ..., V$ into \eqref{eq_3.28} and absorbing the like terms to the left-hand side of \eqref{eq_3.28}, we obtain
		\begin{align}\label{eq_3.14}
			&\frac12\bigl[\|\varepsilon^{m+1}_u\|^2_{L^2} - \|\varepsilon_u^m\|^2_{L^2}\bigr] +\frac14\|\varepsilon_u^{m+1} - \varepsilon_u^m\|^2_{L^2}+ \frac{10 k}{16}\|\nab\varepsilon_{u}^{m+1}\|^2_{L^2}\\\nonumber
			&\leq Ck h^2 \|u(t_{m+1})\|^2_{H^2} + Ckh^4\|u(t_{m+1})\|^2_{L^4}
			\|u(t_m)\|^2_{H^2}  +  Ck\|g(u(t_{m+1})) - g(\cP_{u}u(t_{m+1}))\|^2_{L^2} \\\nonumber
			&\qquad+ Ckh^3\|u(t_{m+1})\|^2_{L^4}\|\vsigma(t_{m+1})\|^2_{H^2} + Ck h^3\|u_h^{m}\|^2_{L^2}\|u(t_{m+1})\|^2_{H^2} + 2kCc_B^2h^4\|u(t_m)\|^2_{H^2} \\\nonumber
			&\qquad + C\int_{t_m}^{t_{m+1}}\|\nab(u(t_{m+1}) - u(s))\|^2_{L^2}\, ds \\\nonumber
			&\qquad+ C\int_{t_m}^{t_{m+1}}[\|u(s)\|^2_{L^4}\|\vsigma(s) - \vsigma(t_{m+1})\|^2_{L^4} + \|u(s) - u(t_{m+1})\|^2_{L^4}\|\vsigma(t_{m+1})\|^2_{L^4}]\, ds \\\nonumber
			&\qquad + Ck\|u(t_{m+1})\|^2_{L^4}
			\|u(t_{m+1})-u(t_m)\|^2_{L^2}
			\\\nonumber
			&\qquad
			+8\chi^2k^{\frac12}
			\left(
			\int_{t_m}^{t_{m+1}}
			\|u(t_{m+1})-u(s)\|^4_{L^4}\,ds
			\right)^{\frac12}
			\|\varepsilon_u^m\|^2_{L^2} \\\nonumber
			&\qquad + C\int_{t_{m}}^{t_{m+1}}\|g(u(s)) - g(u(t_{m+1}))\|^2_{L^2}\, ds + 2\left\|\int_{t_m}^{t_{m+1}} (B(u(s)) - B(u(t_{m})))\, dW(s)\right\|^2_{L^2}\\\nonumber
			&\qquad + 4\chi^2
			\left(
			\int_{t_m}^{t_{m+1}}
			\|u(s)\|_{L^{q+1}}^{q+1}\,ds
			+(1+|D|)k
			\right)
			\|\varepsilon_u^m\|^2_{L^2} \\\nonumber
			&\qquad+  \frac{ k}{16} \|\varepsilon_{u}^{m+1}\|^2_{L^2}+ 4C_L^4\chi^2k\|u_h^m\|^{2}_{L^{2}}\|\varepsilon_{u}^{m+1}\|^2_{L^2}\\\nonumber
			&\qquad + k \|\varepsilon_{u}^{m+1}\|^2_{L^2} + 2kc_B^2 \|\varepsilon_{u}^{m+1}\|^2_{L^2}\\\nonumber
			&\qquad+ 2\left\|(B(u(t_m)) - B(u_h^m))\Delta W_m\right\|^2_{L^2} - 2k\|B(u(t_m)) - B(u_h^m)\|^2_{L^2} \\\nonumber
			&\qquad+ \left(\int_{t_m}^{t_{m+1}} (B(u(s)) - B(u^m_h))\, dW(s),  \varepsilon_u^m\right).
		\end{align}
		
		Applying the summation $\sum_{m=0}^{\ell}$ for any $0\leq \ell <M$ to \eqref{eq_3.14}, we obtain
		\begin{align}\label{eq3.15}
			&	\|\varepsilon_u^{\ell+1}\|^2_{L^2} + \frac12\sum_{m=0}^{\ell}\|\varepsilon_u^{m+1} - \varepsilon_u^m\|^2_{L^2} +  \frac{5k}{4}\sum_{m=0}^{\ell} \| \nab\varepsilon_u^{m+1}\|^2_{L^2} \\\nonumber
			&\leq F_{\ell} + M_{\ell} + \sum_{m=0}^{\ell} G_m\|\varepsilon_u^{m}\|^2_{L^2},
		\end{align}
		where 
		\begin{align*}
			F_{\ell} &:= 2\sum_{m=0}^{\ell}\Biggl[Ck h^2 \|u(t_{m+1})\|^2_{H^2} + Ckh^4\|u(t_{m+1})\|^2_{L^4}
			\|u(t_m)\|^2_{H^2}  +  Ck\|g(u(t_{m+1})) - g(\cP_{u}u(t_{m+1}))\|^2_{L^2} \\\nonumber
			&\qquad+ Ckh^3\|u(t_{m+1})\|^2_{L^4}\|\vsigma(t_{m+1})\|^2_{H^2} + Ck h^3\|u_h^{m}\|^2_{L^2}\|u(t_{m+1})\|^2_{H^2} + 2kCc_B^2h^4\|u(t_m)\|^2_{H^2} \\\nonumber
			&\qquad + C\int_{t_m}^{t_{m+1}}\|\nab(u(t_{m+1}) - u(s))\|^2_{L^2}\, ds + Ck\|u(t_{m+1})\|^2_{L^4}
			\|u(t_{m+1})-u(t_m)\|^2_{L^2}\\\nonumber
			&\qquad+ C\int_{t_m}^{t_{m+1}}[\|u(s)\|^2_{L^4}\|\vsigma(s) - \vsigma(t_{m+1})\|^2_{L^4} + \|u(s) - u(t_{m+1})\|^2_{L^4}\|\vsigma(t_{m+1})\|^2_{L^4}]\, ds 
			\\\nonumber
			&\qquad
			+8\chi^2k^{\frac12}
			\left(
			\int_{t_m}^{t_{m+1}}
			\|u(t_{m+1})-u(s)\|^4_{L^4}\,ds
			\right)^{\frac12}
			\|\varepsilon_u^m\|^2_{L^2} \\\nonumber
			&\qquad + C\int_{t_{m}}^{t_{m+1}}\|g(u(s)) - g(u(t_{m+1}))\|^2_{L^2}\, ds + 2\left\|\int_{t_m}^{t_{m+1}} (B(u(s)) - B(u(t_{m})))\, dW(s)\right\|^2_{L^2}\Biggr] \\\nonumber
			&\qquad+ k \left(\frac{17}{16}+ 4C_L^4\chi^2\|u_h^{\ell}\|^{2}_{L^{2}}  \right)\|\varepsilon_u^{\ell+1}\|^2_{L^2} ,\\\nonumber
			M_{\ell} &:= \sum_{m=0}^{\ell} Z_{m},\\\nonumber
			Z_m&:= 4\left\|(B(u(t_m)) - B(u_h^m))\Delta W_m\right\|^2_{L^2} - 4k\|B(u(t_m)) - B(u_h^m)\|^2_{L^2} \\\nonumber
			&\qquad+ 4 \left(\int_{t_m}^{t_{m+1}} (B(u(s)) - B(u^m_h))\, dW(s),  \varepsilon_u^m\right),\\\nonumber
			G_m&:=  2k\left(\frac{17}{16} + 2c_B^2 + 4\chi^2(1+|D|) + 4C_L^4\chi^2 \|u^m_h\|^{2}_{L^{2}} \right)+ 8\chi^2\int_{t_m}^{t_{m+1}}\|u(s)\|^{q+1}_{L^{q+1}}\, ds.
		\end{align*}
		
		Now, we move to apply the stochastic Gronwall inequality to \eqref{eq3.15}. To do that, we suppose that $\{M_{\ell};\, \ell \ge 0\}$ is a martingale (this will be verified at the end of the proof). Applying the stochastic Gronwall inequality \eqref{ineq2.4} with 
		$\alpha = \frac{2}{1+r}$, $\beta = \frac{2}{1-r}$, and $0 < r < 1$ 
		to \eqref{eq3.15}, we obtain
		
	\begin{align}\label{eq3.13}
		&\left(\mE\left[\sup_{1\leq \ell\leq M}\|\varepsilon_u^{\ell}\|_{L^2}^{2r}\right]\right)^{\frac{1}{2r}}+\left(\mE\left[\left(k\sum_{m=0}^{\ell}\|\nab\varepsilon_u^{m+1}\|_{L^2}^2\right)^r\right]\right)^{\frac{1}{2r}}
		\\\nonumber
		&\leq \left(1+\frac{1}{1-\alpha r}\right)^{\frac{1}{2\alpha r}}\left(\mE\left[\exp\left(\beta r\sum_{m=0}^{M-1}G_m\right)\right]\right)^{\frac{1}{2\beta r}}\left(\mE\left[\sup_{0\leq\ell<M}F_\ell\right]\right)^{\frac12}.
	\end{align}
	
	We next estimate the exponential factor on the right-hand side of \eqref{eq3.13}. Recalling that $\beta=2/(1-r)$, we have
	\begin{align*}
		\mE\left[\exp\left(\frac{2r}{1-r}\sum_{m=0}^{M-1}G_m\right)\right]
		&=\exp\left(\frac{2r}{1-r}\sum_{m=0}^{M-1}2k\left(\frac{17}{16}+2c_B^2+4\chi^2(1+|D|)\right)\right)\\
		&\quad\times\mE\left[\exp\left(\frac{16r}{1-r}C_L^4\chi^2k\sum_{m=0}^{M-1}\|u_h^m\|_{L^{2}}^{2}\right)\exp\left(\frac{16\chi^2r}{1-r}\int_0^T\|u(s)\|_{L^{q+1}}^{q+1}\,ds\right)\right].
	\end{align*}
	
	Let $C_G:=\frac{17}{16}+2c_B^2+4\chi^2(1+|D|)$. Then, since $Mk=T$, we have
	\begin{align}\label{eq:exp_G}
		&\mE\left[\exp\left(\frac{2r}{1-r}\sum_{m=0}^{M-1}G_m\right)\right]\\\nonumber
		&\quad=\exp\left(\frac{4r}{1-r}C_GT\right)\mE\left[\exp\left(\frac{16r}{1-r}C_L^4\chi^2k\sum_{m=0}^{M-1}\|u_h^m\|_{L^{2}}^{2}\right)\exp\left(\frac{16\chi^2r}{1-r}\int_0^T\|u(s)\|_{L^{q+1}}^{q+1}\,ds\right)\right].
	\end{align}
	
	By the Cauchy--Schwarz inequality, we have
	\begin{align}\label{eq:exp_G_CS}
		&\mE\left[\exp\left(\frac{2r}{1-r}\sum_{m=0}^{M-1}G_m\right)\right]\\\nonumber
		&\quad\leq\exp\left(\frac{4r}{1-r}C_GT\right)\left\{\mE\left[\exp\left(\frac{32r}{1-r}C_L^4\chi^2k\sum_{m=0}^{M-1}\|u_h^m\|_{L^{2}}^{2}\right)\right]\right\}^{\frac12}\left\{\mE\left[\exp\left(\frac{32\chi^2r}{1-r}\int_0^T\|u(s)\|_{L^{q+1}}^{q+1}\,ds\right)\right]\right\}^{\frac12}.
	\end{align}

	By the discrete Jensen inequality and Lemma \ref{lemma_exp_fem_stability}, we obtain
	\begin{align}\label{eq:exp_G_discrete}
		\mE\left[
		\exp\left(
		\frac{32r}{1-r}C_L^4\chi^2k
		\sum_{m=1}^{M-1}\|u_h^m\|_{L^2}^2
		\right)
		\right] &\leq
		\frac{1}{M-1}\sum_{m=1}^{M-1}
		\mE\left[
		\exp\left(
		\frac{32r}{1-r}C_L^4\chi^2T
		\|u_h^m\|_{L^2}^2
		\right)
		\right]\\\nonumber
		&\leq
		\max_{1\leq m\leq M-1}
		\mE\left[
		\exp\left(
		\frac{32r}{1-r}C_L^4\chi^2T
		\|u_h^m\|_{L^2}^2
		\right)
		\right] \leq C_{exp,2},
	\end{align}
	while Lemma~\ref{lemma_expo_moment_H} gives
	\[
	\mE\left[\exp\left(\frac{32\chi^2r}{1-r}\int_0^T\|u(s)\|_{L^{q+1}}^{q+1}\,ds\right)\right]\leq C_{\exp,1}.
	\]
	
	 Combining \eqref{eq:exp_G_CS} and \eqref{eq:exp_G_discrete}, we obtain
	\begin{align}\label{eq:exp_G_bound}
		\mE\left[\exp\left(\frac{2r}{1-r}\sum_{m=0}^{M-1}G_m\right)\right]
		&\leq\exp\left(\frac{4r}{1-r}C_GT\right)C_{exp,2}^{\frac12}C_{\exp,1}^{\frac12}\leq C,
	\end{align}
	where $C>0$ is independent of $h$ and $k$.

	Next, we estimate $\Bigl(\mE\Bigl[\sup_{0\leq\ell<M}F_\ell\Bigr]\Bigr)^{\frac12}$. Using the definition of $F_\ell$, we have
	\begin{align*}
		\mE\Bigl[\sup_{0\leq\ell<M}F_\ell\Bigr]
		&\leq Ck\sum_{m=0}^{M-1}\mE\Biggl[h^2\|u(t_{m+1})\|_{H^2}^2+h^4\|u(t_{m+1})\|_{L^4}^2\|u(t_m)\|_{H^2}^2\\\nonumber
		&\qquad+\|g(u(t_{m+1}))-g(\cP_u u(t_{m+1}))\|_{L^2}^2\\\nonumber
		&\qquad+h^3\|u(t_{m+1})\|_{L^4}^2\|\vsigma(t_{m+1})\|_{H^2}^2+h^3\|u_h^m\|_{L^2}^2\|u(t_{m+1})\|_{H^2}^2+2c_B^2h^4\|u(t_m)\|_{H^2}^2\Biggr]\\\nonumber
		&\quad+C\sum_{m=0}^{M-1}\mE\Biggl[\int_{t_m}^{t_{m+1}}\|\nab(u(t_{m+1})-u(s))\|_{L^2}^2\,ds+Ck\|u(t_{m+1})\|_{L^4}^2\|u(t_{m+1})-u(t_m)\|_{L^2}^2\\\nonumber
		&\qquad+\int_{t_m}^{t_{m+1}}\Bigl[\|u(s)\|_{L^4}^2\|\vsigma(s)-\vsigma(t_{m+1})\|_{L^4}^2+\|u(s)-u(t_{m+1})\|_{L^4}^2\|\vsigma(t_{m+1})\|_{L^4}^2\Bigr]\,ds\\\nonumber
		&\qquad+k^{\frac12}\left(\int_{t_m}^{t_{m+1}}\|u(t_{m+1})-u(s)\|_{L^4}^4\,ds\right)^{\frac12}\|\varepsilon_u^m\|_{L^2}^2\\\nonumber
		&\qquad+\int_{t_m}^{t_{m+1}}\|g(u(s))-g(u(t_{m+1}))\|_{L^2}^2\,ds+\left\|\int_{t_m}^{t_{m+1}}(B(u(s))-B(u(t_m)))\,dW(s)\right\|_{L^2}^2\Biggr]\\\nonumber
		&\quad+k\,\mE\left[\sup_{0\leq\ell<M}\left(\frac{17}{16}+4C_L^4\chi^2\|u_h^\ell\|_{L^{2}}^{2}\right)\|\varepsilon_u^{\ell+1}\|_{L^2}^2\right]\\\nonumber
		&:=Q_1+Q_2+Q_3.
	\end{align*}
	
Now, we estimate $Q_1, Q_2$ and $Q_3$. First using the Cauchy-Schwarz inequality, Lemma \ref{Stability_PDE}, and Lemma \ref{lemma_fem_stability}, we obtain
	\begin{align*}
		Q_1&\leq CT\Bigl(C_{6,1}h^2+C_{3,1}C_{6,2}h^4+ C_{4,q-1}C_{6,2}h^2+C_{3,1}C_{4,2}h^3+c_B^2C_{6,1}h^4 \Bigr)\\\nonumber
		&\qquad+ Ch^3k\sum_{m=0}^{M-1}\mE\left[\|u_h^m\|_{L^2}^2\|u(t_{m+1})\|_{H^2}^2\right]\\\nonumber
		&\leq CT\Bigl(C_{6,1}h^2+C_{3,1}C_{6,2}h^4+ C_{4,q-1}C_{6,2}h^2+C_{3,1}C_{4,2}h^3+c_B^2C_{6,1}h^4 \Bigr)\\\nonumber
		&\qquad+ CT\bar{C}_{1,2}C_{6,2}h^3\\\nonumber
		&:= C_{Q_1}h^2,
	\end{align*}
	where, in particular, the nonlinear projection term is estimated by
	\begin{align*}
			\|g(u)-g(\cP_u u)\|_{L^2}^2&\leq C\left(1+\|u\|_{L^{2q}}^{2(q-1)}+\|\cP_u u\|_{L^{2q}}^{2(q-1)}\right)\|u-\cP_u u\|_{L^{2q}}^2\\\nonumber
		&\leq C\left(1+\|u\|_{L^{2q}}^{2(q-1)}\right)h^2 \|u\|^2_{H^2}.
	\end{align*}

	Next, using Lemma~\ref{Lemma_Holder}, Lemma \ref{Stability_PDE}, the Cauchy-Schwarz inequality, the It\^o isometry, and Assumption~\eqref{Assump_Lipschitz}, we obtain
	\begin{align*}
		Q_2
		&\leq CT\Bigl[\tilde{C}_{2,1}k + C_{3,1}\tilde{C}_{1,2}k + C_{3,1}\tilde{C}_{1,2} k + C_{1,2}\tilde{C}_{2,2}k \\\nonumber
		&\qquad+ (C_{1,2} + \tilde{C})\tilde{C}_{2,2}k + (1+ C_{4,q-1})\tilde{C}_{2,2}k + \tilde{C}_{1,1}k\Bigr]\\\nonumber
		&:= C_{Q_2}k,
	\end{align*}
	where the nonlinear reaction term is controlled by
\begin{align*}
		\|g(u)-g(v)\|_{L^2}^2&\leq C\left(1+\|u\|_{L^{2q}}^{2(q-1)}+\|v\|_{L^{2q}}^{2(q-1)}\right)\|u-v\|_{L^{2q}}^2\\\nonumber
		&\leq C\left(1+\|u\|_{H^1}^{2(q-1)}+\|v\|_{H^1}^{2(q-1)}\right)\|u-v\|_{H^1}^2.
\end{align*}

	Finally, to estimate $Q_3$, we use the Cauchy--Schwarz inequality together with Lemma \ref{Stability_PDE} and Lemma \ref{lemma_fem_stability}. We obtain
	\begin{align*}
		Q_3&\leq k\frac{17}{16}\mE\left[\sup_{0\leq\ell<M}\|\varepsilon_u^{\ell+1}\|_{L^2}^2\right] +4C_L^4\chi^2k\left\{\mE\left[\sup_{0\leq\ell<M}\|u_h^\ell\|_{L^{2}}^{4}\right]\right\}^{\frac12}\left\{\mE\left[\sup_{0\leq\ell<M}\|\varepsilon_u^{\ell+1}\|_{L^2}^4\right]\right\}^{\frac12}\\\nonumber
		&\leq (C_{1,1} + \bar{C}_{1,1})k + 4C_L^^4\chi^2k \tilde{C}_{1,2}(C_{1,2} + \bar{C}_{1,2}):= C_3k.
	\end{align*}
	
	Consequently,
\begin{align}\label{eq3.52}
	\mE\left[\sup_{0\leq\ell<M}F_\ell\right]\leq C_{Q_1}h^2+(C_{Q_2}+C_{Q_3})k.
\end{align}

	Substituting this estimate together with \eqref{eq:exp_G_bound} into \eqref{eq3.13}, we arrive at
	\[
	\left(\mE\left[\sup_{1\leq\ell\leq M}\|\varepsilon_u^\ell\|_{L^2}^{2r}\right]\right)^{\frac{1}{2r}}+\left(\mE\left[\left(k\sum_{m=0}^{M-1}\|\nab\varepsilon_u^{m+1}\|_{L^2}^2\right)^r\right]\right)^{\frac{1}{2r}}\leq \widehat C\left(h+k^{\frac12}\right),
	\]
	where $\widehat C>0$ is independent of $h$ and $k$.
		
		We now establish that the sequence $\{M_\ell\}$ defines a martingale. First, we show that $M_\ell$ is integrable. Using the triangle inequality, It\^o's isometry, Assumption~\eqref{Assump_Lipschitz}, and the Burkholder--Davis--Gundy inequality, we obtain
		\begin{align}\label{eq_3.16}
			\mE[|M_\ell|]
			&\leq 4\sum_{m=0}^{\ell}\mE\left[\|B(u(t_m))-B(u_h^m)\|_{L^2}^2|\Delta W_m|^2\right]+4k\sum_{m=0}^{\ell}\mE\left[\|B(u(t_m))-B(u_h^m)\|_{L^2}^2\right]\\\nonumber
			&\qquad+4\mE\left[\left|\sum_{m=0}^{\ell}\left(\int_{t_m}^{t_{m+1}}(B(u(s))-B(u_h^m))\,dW(s),\varepsilon_u^m\right)\right|\right]\\\nonumber
			&\leq 8c_B^2k\sum_{m=0}^{\ell}\mE\left[\|u(t_m)-u_h^m\|_{L^2}^2\right]+4\mE\left[\left(\sum_{m=0}^{\ell}\int_{t_m}^{t_{m+1}}\|B(u(s))-B(u_h^m)\|_{L^2}^2\|\varepsilon_u^m\|_{L^2}^2\,ds\right)^{\frac12}\right]\\\nonumber
			&<\infty.
		\end{align}
		
		Moreover, for every $0\leq m\leq M-1$, since $u(t_m)$, $u_h^m$, and $\varepsilon_u^m$ are $\mathcal F_{t_m}$-measurable and $\Delta W_m$ is independent of $\mathcal F_{t_m}$ with $\mE[|\Delta W_m|^2\mid\mathcal F_{t_m}]=k$, we have
		\begin{align*}
			\mE[Z_m\mid\mathcal F_{t_m}]
			&=4\|B(u(t_m))-B(u_h^m)\|_{L^2}^2\mE[|\Delta W_m|^2\mid\mathcal F_{t_m}]-4k\|B(u(t_m))-B(u_h^m)\|_{L^2}^2\\\nonumber
			&\qquad+4\mE\left[\left(\int_{t_m}^{t_{m+1}}(B(u(s))-B(u_h^m))\,dW(s),\varepsilon_u^m\right)\Bigm|\mathcal F_{t_m}\right]\\\nonumber
			&=4k\|B(u(t_m))-B(u_h^m)\|_{L^2}^2-4k\|B(u(t_m))-B(u_h^m)\|_{L^2}^2+0=0.
		\end{align*}
		
		Therefore, for any $0\leq n<\ell\leq M-1$, using $M_\ell=M_n+\sum_{m=n+1}^{\ell}Z_m$ and the tower property of conditional expectation, we obtain
		\begin{align}\label{eq3.17}
			\mE[M_\ell\mid\mathcal F_{t_{n+1}}]
			&=M_n+\sum_{m=n+1}^{\ell}\mE[Z_m\mid\mathcal F_{t_{n+1}}]\\\nonumber
			&=M_n+\sum_{m=n+1}^{\ell}\mE\left[\mE[Z_m\mid\mathcal F_{t_m}]\mid\mathcal F_{t_{n+1}}\right]=M_n.
		\end{align}
		Thus, $\{M_\ell\}_{\ell=0}^{M-1}$ is a martingale with respect to the discrete filtration $\{\mathcal F_{t_{\ell+1}}\}_{\ell=0}^{M-1}$.
		
		Finally, the estimates \eqref{estimate_sigma_fem} and \eqref{estimate_c_fem} follow directly from \eqref{eq3.32}, the triangle inequality, the interpolation estimates \eqref{ineq_interpolation}, and \eqref{estimate_u_fem}. 
		
		The proof is complete.

	\end{proof}
	
	\subsection{High moment error estimates}
	The error estimate established in Theorem \ref{Thm_global_error} provides  a strong convergence in the $L_{\omega}^{r}L_t^{\infty}L_x^2$- and $L_{\omega}^{r}L_t^{2}H_x^1$-norm for $0<r<2$.  We note that these sub-quadratic moment estimates are consequences of using the stochastic Gronwall's inequality \eqref{Stochastic_Gronwall}. However, we show below, which is the goal of this subsection,  that such a limitation can be overcome by a bootstrap argument to obtain arbitrarily high-order moment estimates, which in turn implies strong convergence in the $L_{\omega}^{r}L_t^{\infty}L_x^2$-norm for all $0<r<\infty$.   
\begin{theorem}\label{Thm_higher_moment_error}
	Let $(u,\vsigma,v)$ and $(u_h^m,\vsigma_h^m,v_h^m)$ be the variational solution of \eqref{Weak_formulation} and the approximate solution generated by the Main Algorithm, respectively. Assume that $B$ satisfies \eqref{Assump_Lipschitz},
	\eqref{Assump_LinearGrowth}, and \eqref{Assump_bounded}. 
	Suppose that $u_0\in L^{2^{p+1}}(\Omega;H^2(D))$ for all $p \in \mathbb{N}$ and that there exists $\gamma>0$ such that
	\[
	\mE\left[\exp\left(\gamma\|u_0\|_{L^2}^2
	\right)\right]<\infty.
	\]
	Let $0<r<1$. Then, there exists a constants $k_0>0$ and $\widehat{C}_2>0$, independent of $h$ and $k$, such that for every $0<k \leq k_0$,
	\begin{align}\label{estimate_u_fem_higher}
		\left(
		\mE\left[
		\max_{1\leq m\leq M}
		\|u(t_m)-u_h^m\|_{L^2}^{2^pr}
		\right]
		\right)^{\frac{1}{2^pr}}
		\leq \widehat{C}_2\left(k^{\frac12}+h\right).
	\end{align}
\end{theorem}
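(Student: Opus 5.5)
The plan is a bootstrap on the power of the error energy, by induction on $p$; the base case $p=1$ is Theorem~\ref{Thm_global_error}. Write $\mathcal E^{m}:=\|\varepsilon_u^{m}\|_{L^2}^2$. For $p=2$, I would not start from the summed inequality \eqref{eq3.15}. Instead I take the one-step inequality \eqref{eq_3.14} and multiply it by $\|\varepsilon_u^{m+1}\|_{L^2}^2$, exactly as in the proof of Lemma~\ref{lemma_fem_highmoment_stability}. The identity $2a(a-b)=a^2-b^2+(a-b)^2$ then produces
\[
\tfrac14\bigl[(\mathcal E^{m+1})^2-(\mathcal E^m)^2\bigr]+\tfrac14(\mathcal E^{m+1}-\mathcal E^m)^2+c\,k\,\mathcal E^{m+1}\|\nab\varepsilon_u^{m+1}\|_{L^2}^2 .
\]
In every right-hand term I split $\mathcal E^{m+1}=\mathcal E^m+(\mathcal E^{m+1}-\mathcal E^m)$. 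The parts multiplied by $\mathcal E^{m+1}-\mathcal E^m$ are handled with Young's inequality and absorbed into $\tfrac14(\mathcal E^{m+1}-\mathcal E^m)^2$. This leaves a recursion of the same structural form as \eqref{eq3.15} for $(\mathcal E^{\ell})^2$:
\[
(\mathcal E^{\ell+1})^2+\dots\le \widetilde F_\ell+\widetilde M_\ell+\sum_{m=0}^{\ell}\widetilde G_m(\mathcal E^m)^2 .
\]
Here $\widetilde G_m$ is a fixed multiple of $G_m$. The new martingale $\widetilde M_\ell$ collects the terms $(Z_m)\,\mathcal E^m$, which have zero conditional mean because $\mathcal E^m$ is $\mathcal F_{t_m}$-measurable. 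The new forcing $\widetilde F_\ell$ is built from the squares of the old consistency terms and from mixed products $(\text{old } F\text{-terms})\cdot\mathcal E^m$.

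The noise contributions need the most care in this step. The terms $\|(B(u(t_m))-B(u_h^m))\Delta W_m\|_{L^2}^2\,\mathcal E^m$ are treated the same way as before: adding and subtracting $k\|B(u(t_m))-B(u_h^m)\|^2_{L^2}\mathcal E^m$ moves the fluctuation into $\widetilde M_\ell$, and the remainder is $\le Ck(\mathcal E^m)^2+Ckh^4\|u(t_m)\|^4_{H^2}$. Products of the form $(\text{stochastic integral},\varepsilon_u^m)\cdot(\mathcal E^{m+1}-\mathcal E^m)$ are bounded by $\tfrac1{16}(\mathcal E^{m+1}-\mathcal E^m)^2+C\|\int_{t_m}^{t_{m+1}}(B(u(s))-B(u_h^m))dW\|^2\mathcal E^m$. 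Taking conditional expectation and using the It\^o isometry, the last term contributes $Ck(\mathcal E^m)^2$ to the $\widetilde G$ part, plus a consistency term of order $k^2$ per step, which sums to $O(k)$.

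With the recursion in place, I apply the stochastic Gronwall inequality \eqref{ineq2.4} to $(\mathcal E^\ell)^2$, again with $\alpha=2/(1+r)$ and $\beta=2/(1-r)$. This bounds $\mE[\sup_\ell (\mathcal E^\ell)^{2r}]$, that is the $4r$-th moment of $\|\varepsilon_u\|_{L^2}$, by the same three factors as before. The exponential factor is $\mE[\exp(\beta r\sum\widetilde G_m)]$. Since $\widetilde G_m$ has the same form as $G_m$ with larger constants, this factor is finite by Lemma~\ref{lemma_expo_moment_H} and Lemma~\ref{lemma_exp_fem_stability}, because $\gamma_1$ and $\gamma_2$ in those lemmas are arbitrary. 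It remains to show $\mE[\sup_\ell\widetilde F_\ell]\le C(k+h^2)^2$, so that its square root gives $C(k+h^2)$ and hence the rate $(k^{1/2}+h)$ after taking the $1/(4r)$ power.

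The hardest step is this last bound, specifically the mixed terms $(\text{consistency})\cdot\mathcal E^m$ and the term $k^{1/2}(\int\|u(t_{m+1})-u(s)\|^4_{L^4})^{1/2}(\mathcal E^m)^2$. For these I would use Cauchy--Schwarz in $\Omega$ and control $\mE[\sup_m(\mathcal E^m)^{2}]$ only crudely, via Lemma~\ref{lemma_fem_highmoment_stability} and Lemma~\ref{Stability_PDE}(a). A crude bound suffices because each mixed term already carries an $O(k+h^2)$ consistency factor. The Young split $ab\le \delta a^2+C_\delta b^2$ must put that factor squared on the consistency side; this is what raises the required moments of $u$ to the $2^{p+1}$ level, i.e.\ $u_0\in L^{2^{p+1}}(\Omega;H^2)$, and uses Lemma~\ref{Lemma_Holder} and Lemma~\ref{Stability_PDE}(c),(d),(f) with doubled exponents. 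If the raw mixed terms do not close in this form, I would instead use Young's inequality to put a $k\,(\mathcal E^m)^2$ piece into $\widetilde G$ and the square of the consistency term divided by $k$ into $\widetilde F$; the Hölder estimates give that quotient order $k\cdot(k+h^2)^2$ per step.

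The general induction step $p\to p+1$ repeats this argument. I multiply the level-$p$ one-step inequality by $(\mathcal E^{m+1})^{2^{p-1}}$ and use the level-$p$ estimate (and Lemma~\ref{lemma_fem_highmoment_stability}) to control the lower powers that appear in the cross terms. Stochastic Gronwall then closes the step exactly as above, which gives \eqref{estimate_u_fem_higher}.
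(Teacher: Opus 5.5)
Your overall architecture matches the paper's. You multiply \eqref{eq_3.14} by $\|\varepsilon_u^{m+1}\|_{L^2}^2$, compensate the noise terms into martingale increments, apply the stochastic Gronwall lemma to $\|\varepsilon_u^m\|_{L^2}^4$ with $\alpha=2/(1+r)$, $\beta=2/(1-r)$, control the exponential factor by Lemmas~\ref{lemma_expo_moment_H} and~\ref{lemma_exp_fem_stability}, and show that the forcing is $O(k^2+h^4)$. However, the step that is supposed to produce the recursion does not work as you describe it.

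The right-hand side of \eqref{eq_3.14} already contains error terms with random, unbounded coefficients: $b_m\mathcal E^{m+1}$ with $b_m=k(\frac{17}{16}+2c_B^2+4C_L^4\chi^2\|u_h^m\|_{L^2}^2)$, and $a_m\mathcal E^m$ with $a_m=4\chi^2(\int_{t_m}^{t_{m+1}}\|u(s)\|_{L^{q+1}}^{q+1}\,ds+(1+|D|)k)$. After multiplication by $\mathcal E^{m+1}$, splitting $\mathcal E^{m+1}=\mathcal E^m+(\mathcal E^{m+1}-\mathcal E^m)$ in $b_m(\mathcal E^{m+1})^2$ leaves $b_m(\mathcal E^{m+1}-\mathcal E^m)^2$. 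This can be absorbed only if $b_m$ is pathwise below a fixed constant, i.e.\ under a localization on $k\|u_h^m\|_{L^2}^2$. Young's inequality on the cross parts also produces $b_m^2(\mathcal E^m)^2$ and $a_m^2(\mathcal E^m)^2$.

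So $\widetilde G_m$ is not a fixed multiple of $G_m$. Exponential moments of $\sum_m(a_m^2+b_m^2)$, which involve $k^2\sum_m\|u_h^m\|_{L^2}^4$ and $\sum_m(\int_{t_m}^{t_{m+1}}\|u\|_{L^{q+1}}^{q+1}ds)^2$, are not supplied by the two exponential lemmas. Moving these terms into $\widetilde F$ with crude moment bounds costs $O(k)$.

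The paper avoids this by not splitting such terms. It keeps $\sum_{m\le\ell}\widetilde{\mathcal G}_m\|\varepsilon_u^{m+1}\|_{L^2}^4$ and shifts the index so that it becomes a Gronwall sum with coefficients $\widetilde{\mathcal G}_{m-1}+\widetilde{\mathcal G}_m$. It then splits only the single last term, which puts $\widetilde{\mathcal G}_\ell^2(\|\varepsilon_u^{\ell+1}\|_{L^2}^2+\|\varepsilon_u^\ell\|_{L^2}^2)^2$ into $\widetilde{\mathcal R}_\ell$. This is $O(k^2)$ in expectation because $\widetilde{\mathcal G}_\ell=O(k)(1+\|u_h^\ell\|_{L^2}^2)$. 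The $a_m$-term has to be treated the same way, after the symmetric bound $a_m\mathcal E^m\mathcal E^{m+1}\le\frac{a_m}{2}[(\mathcal E^m)^2+(\mathcal E^{m+1})^2]$. Its last term is again $O(k^2)$, since $\max_\ell a_\ell\le Ck(1+\sup_t\|u(t)\|_{L^{q+1}}^{q+1})$.

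Second, your bookkeeping of the forcing is off by a square. To obtain $k^{1/2}+h$ you need $\mE[\sup_\ell\widetilde F_\ell]\lesssim k^2+h^4$. A mixed term $c_m\mathcal E^m$ with $\sum_m c_m=O(k+h^2)$, bounded by Cauchy--Schwarz and a crude moment bound on $\mathcal E^m$ as you propose, contributes $O(k+h^2)$ and yields only the rate $k^{1/4}+h^{1/2}$. The same happens for $k^{1/2}(\int_{t_m}^{t_{m+1}}\|u(t_{m+1})-u(s)\|_{L^4}^4ds)^{1/2}(\mathcal E^m)^2$ (order $k^2$ per step), and for the noise consistency term, which you yourself say ``sums to $O(k)$''.

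Your fallback is what is needed, and it is how the paper bounds its group $I$: $c_m\mathcal E^m\le k(\mathcal E^m)^2+c_m^2/(4k)$. Then only squared consistency terms divided by $k$ enter the forcing, each of order $k^3+kh^4$ per step, e.g.\ $\frac{C}{k}\|\int_{t_m}^{t_{m+1}}(B(u(s))-B(u(t_m)))\,dW(s)\|_{L^2}^4$. A crude bound on $\mathcal E^m$ is admissible only when the remaining factor is already $O(h^4)$ or quartic in time increments, as in $\int_{t_m}^{t_{m+1}}\|u(t_{m+1})-u(s)\|_{L^4}^4ds\,(\mathcal E^m)^2$.

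Also, ``taking conditional expectation'' is not allowed inside the pathwise inequality required by the stochastic Gronwall lemma. You must subtract compensators:
\begin{itemize}
\item $\int_{t_m}^{t_{m+1}}\|B(u(s))-B(u_h^m)\|_{L^2}^2ds\,\mathcal E^m$ for the squared stochastic integral;
\item $3k^2\|B(u(t_m))-B(u_h^m)\|_{L^2}^4$ for the quartic term $\|(B(u(t_m))-B(u_h^m))\Delta W_m\|_{L^2}^4$ that your Young step generates (the paper's $\Delta M_{1,m}$).
\end{itemize}
Then Young-split the consistency part of the remainder as above.

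Finally, $\gamma_1,\gamma_2$ in Lemmas~\ref{lemma_expo_moment_H} and~\ref{lemma_exp_fem_stability} are arbitrary only provided $u_0$ has the matching exponential moment. The hypothesis must therefore be read with $\gamma$ large, depending on $r,\chi,T$; the paper glosses over this as well.
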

\begin{proof} To prove this higher moment error estimate, we apply the boostrap technique from \cite{nguyen2026fully,nguyen2026finite,feng2024high,feng2025full}. Notice that the case $p=1$ was proved in Theorem \ref{Thm_global_error}.   We begin with proving  \eqref{estimate_u_fem_higher} for $p=2$ and then complete the proof by the induction argument. 
	Multiplying \eqref{eq_3.14} by $\|\varepsilon_{u}^{m+1}\|^2_{L^2}$, we obtain
\begin{align}\label{eq_3.15}
	&\frac14\left[\|\varepsilon_u^{m+1}\|_{L^2}^4-\|\varepsilon_u^m\|_{L^2}^4\right]
	+\frac14\left[\|\varepsilon_u^{m+1}\|_{L^2}^2-\|\varepsilon_u^m\|_{L^2}^2\right]^2\\\nonumber
	&\qquad+\frac14\|\varepsilon_u^{m+1}-\varepsilon_u^m\|_{L^2}^2
	\|\varepsilon_u^{m+1}\|_{L^2}^2
	+\frac{10k}{16}\|\nab\varepsilon_u^{m+1}\|_{L^2}^2
	\|\varepsilon_u^{m+1}\|_{L^2}^2\\\nonumber
	&\leq \Biggl\{Ckh^2\|u(t_{m+1})\|_{H^2}^2
	\|\varepsilon_u^{m+1}\|_{L^2}^2
	+Ckh^4\|u(t_{m+1})\|_{L^4}^2
	\|u(t_m)\|_{H^2}^2
	\|\varepsilon_u^{m+1}\|_{L^2}^2\\\nonumber
	&\qquad+Ck\|g(u(t_{m+1}))-g(\cP_u u(t_{m+1}))\|_{L^2}^2
	\|\varepsilon_u^{m+1}\|_{L^2}^2\\\nonumber
	&\qquad+Ckh^3\|u(t_{m+1})\|_{L^4}^2
	\|\vsigma(t_{m+1})\|_{H^2}^2
	\|\varepsilon_u^{m+1}\|_{L^2}^2\\\nonumber
	&\qquad+Ckh^3\|u_h^m\|_{L^2}^2
	\|u(t_{m+1})\|_{H^2}^2
	\|\varepsilon_u^{m+1}\|_{L^2}^2
	+2kCc_B^2h^4\|u(t_m)\|_{H^2}^2
	\|\varepsilon_u^{m+1}\|_{L^2}^2\\\nonumber
	&\qquad+C\left(\int_{t_m}^{t_{m+1}}
	\|\nab(u(t_{m+1})-u(s))\|_{L^2}^2\,ds\right)
	\|\varepsilon_u^{m+1}\|_{L^2}^2\\\nonumber
	&\qquad+C\left(\int_{t_m}^{t_{m+1}}
	\left[
	\|u(s)\|_{L^4}^2\|\vsigma(s)-\vsigma(t_{m+1})\|_{L^4}^2
	+\|u(s)-u(t_{m+1})\|_{L^4}^2
	\|\vsigma(t_{m+1})\|_{L^4}^2
	\right]ds\right)
	\|\varepsilon_u^{m+1}\|_{L^2}^2\\\nonumber
	&\qquad+Ck\|u(t_{m+1})\|_{L^4}^2
	\|u(t_{m+1})-u(t_m)\|_{L^2}^2
	\|\varepsilon_u^{m+1}\|_{L^2}^2\\\nonumber
	&\qquad+8\chi^2k^{\frac12}
	\left(
	\int_{t_m}^{t_{m+1}}
	\|u(t_{m+1})-u(s)\|_{L^4}^4\,ds
	\right)^{\frac12}
	\|\varepsilon_u^m\|_{L^2}^2
	\|\varepsilon_u^{m+1}\|_{L^2}^2\\\nonumber
	&\qquad+C\left(\int_{t_m}^{t_{m+1}}
	\|g(u(s))-g(u(t_{m+1}))\|_{L^2}^2\,ds\right)
	\|\varepsilon_u^{m+1}\|_{L^2}^2\\\nonumber
	&\qquad+2\left\|
	\int_{t_m}^{t_{m+1}}
	(B(u(s))-B(u(t_m)))\,dW(s)
	\right\|_{L^2}^2
	\|\varepsilon_u^{m+1}\|_{L^2}^2\Biggr\}\\\nonumber
	&\qquad+\Biggl\{4\chi^2
	\left(
	\int_{t_m}^{t_{m+1}}\|u(s)\|_{L^{q+1}}^{q+1}\,ds
	+(1+|D|)k
	\right)
	\|\varepsilon_u^m\|_{L^2}^2
	\|\varepsilon_u^{m+1}\|_{L^2}^2\\\nonumber
	&\qquad+\frac{k}{16}\|\varepsilon_u^{m+1}\|_{L^2}^4
	+4C_L^4\chi^2k
	\|u_h^m\|_{L^{2}}^{2}
	\|\varepsilon_u^{m+1}\|_{L^2}^4\\\nonumber
	&\qquad+k\|\varepsilon_u^{m+1}\|_{L^2}^4
	+2kc_B^2\|\varepsilon_u^{m+1}\|_{L^2}^4 -2k\|B(u(t_m))-B(u_h^m)\|_{L^2}^2
	\|\varepsilon_u^{m+1}\|_{L^2}^2\Biggr\}\\\nonumber
	&\qquad+\Biggl\{2\left\|
	(B(u(t_m))-B(u_h^m))\Delta W_m
	\right\|_{L^2}^2
	\|\varepsilon_u^{m+1}\|_{L^2}^2\\\nonumber
	&\qquad+\left(
	\int_{t_m}^{t_{m+1}}
	(B(u(s))-B(u_h^m))\,dW(s),
	\varepsilon_u^m
	\right)
	\|\varepsilon_u^{m+1}\|_{L^2}^2\Biggr\}\\\nonumber
	&:= I + II + III,
\end{align}
where $I, II$ and $III$ on the right-hand side of \eqref{eq_3.15} are seperated by using the curly brackets.
	Next, using the Young inequality to many terms of $I$ on the right-hand side of \eqref{eq_3.15},  we obtain
	\begin{align}
		I &\leq k\|\varepsilon_{u}^{m+1}\|^4_{L^2} + Ckh^4\|u(t_{m+1})\|_{H^2}^4
		+Ckh^4\|u(t_{m+1})\|_{L^4}^2
		\|u(t_m)\|_{H^2}^2
		\|\varepsilon_u^{m+1}\|_{L^2}^2\\\nonumber
		&\qquad+Ck\|g(u(t_{m+1}))-g(\cP_u u(t_{m+1}))\|_{L^2}^4
		\\\nonumber
		&\qquad+Ckh^6\|u(t_{m+1})\|_{L^4}^4
		\|\vsigma(t_{m+1})\|_{H^2}^4
		\\\nonumber
		&\qquad+Ckh^6\|u_h^m\|_{L^2}^4
		\|u(t_{m+1})\|_{H^2}^4
		+2kCc_B^2h^4\|u(t_m)\|_{H^2}^2
		\|\varepsilon_u^{m+1}\|_{L^2}^2\\\nonumber
		&\qquad+C\int_{t_m}^{t_{m+1}}
		\|\nab(u(t_{m+1})-u(s))\|_{L^2}^4\,ds
	\\\nonumber
		&\qquad+C\left(\int_{t_m}^{t_{m+1}}
		\left[
		\|u(s)\|_{L^4}^4\|\vsigma(s)-\vsigma(t_{m+1})\|_{L^4}^4
		+\|u(s)-u(t_{m+1})\|_{L^4}^4
		\|\vsigma(t_{m+1})\|_{L^4}^4
		\right]ds\right)
		\\\nonumber
		&\qquad+Ck\|u(t_{m+1})\|_{L^4}^4
		\|u(t_{m+1})-u(t_m)\|_{L^2}^4
	\\\nonumber
		&\qquad+C\chi^4
		\int_{t_m}^{t_{m+1}}
		\|u(t_{m+1})-u(s)\|_{L^4}^4\,ds
		\|\varepsilon_u^m\|_{L^2}^4
\\\nonumber
		&\qquad+C\int_{t_m}^{t_{m+1}}
		\|g(u(s))-g(u(t_{m+1}))\|_{L^2}^4\,ds
		\\\nonumber
		&\qquad+\frac{C}{k}\left\|
		\int_{t_m}^{t_{m+1}}
		(B(u(s))-B(u(t_m)))\,dW(s)
		\right\|_{L^2}^4.
	\end{align}
	
	Similarly, using the Young inequality, we also obtain
	\begin{align}
		II &\leq 2\chi^4
		\left(
		\int_{t_m}^{t_{m+1}}\|u(s)\|_{L^{q+1}}^{q+1}\,ds
		+(1+|D|)k
		\right)
		\|\varepsilon_u^m\|_{L^2}^4 + 
		2k\|\varepsilon_u^{m+1}\|_{L^2}^4\\\nonumber
		&\qquad
		+4C_L^4\chi^2k
		\|u_h^m\|_{L^{2}}^{2}
		\|\varepsilon_u^{m+1}\|_{L^2}^4 +\frac{17k}{16}\|\varepsilon_u^{m+1}\|_{L^2}^4
		+2kc_B^2\|\varepsilon_u^{m+1}\|_{L^2}^4 \\\nonumber
		&= 2\chi^4
		\left(
		\int_{t_m}^{t_{m+1}}\|u(s)\|_{L^{q+1}}^{q+1}\,ds
		+(1+|D|)k
		\right)
		\|\varepsilon_u^m\|_{L^2}^4 \\\nonumber
		&\qquad
		+4C_L^4\chi^2k
		\|u_h^m\|_{L^{2}}^{2}
		\|\varepsilon_u^{m+1}\|_{L^2}^4 +k \left(\frac{49}{16} + 2c_B^2\right)\|\varepsilon_u^{m+1}\|_{L^2}^4.
	\end{align}
	
	To control $III$, we add and subtract the terms $\|\varepsilon_{u}^m\|^2_{L^2}$ and then using the Young inequality, we obtain
	\begin{align}\label{eq3.56}
		III &= 2\left\|
		(B(u(t_m))-B(u_h^m))\Delta W_m
		\right\|_{L^2}^2
		\left(\|\varepsilon_u^{m+1}\|_{L^2}^2 - \|\varepsilon_u^{m}\|_{L^2}^2\right)\\\nonumber
		&\qquad +2\left\|
		(B(u(t_m))-B(u_h^m))\Delta W_m
		\right\|_{L^2}^2
		\|\varepsilon_u^{m}\|_{L^2}^2\\\nonumber
		&\qquad+\left(
		\int_{t_m}^{t_{m+1}}
		(B(u(s))-B(u_h^m))\,dW(s),
		\varepsilon_u^m
		\right)
		\left(\|\varepsilon_u^{m+1}\|_{L^2}^2 - \|\varepsilon_u^{m}\|_{L^2}^2\right)\\\nonumber
		&\qquad+\left(
		\int_{t_m}^{t_{m+1}}
		(B(u(s))-B(u_h^m))\,dW(s),
		\varepsilon_u^m
		\right)
		\|\varepsilon_u^{m}\|_{L^2}^2\\\nonumber
		&\leq 16\left\|
		(B(u(t_m))-B(u_h^m))\Delta W_m
		\right\|_{L^2}^4
	\\\nonumber
		&\qquad+4\left\|
		\int_{t_m}^{t_{m+1}}
		(B(u(s))-B(u_h^m))\,dW(s)\right\|^2_{L^2}
		\|\varepsilon_u^m\|^2_{L^2} + 
		\frac18\left(\|\varepsilon_u^{m+1}\|_{L^2}^2 - \|\varepsilon_u^{m}\|_{L^2}^2\right)^2\\\nonumber
		&\qquad +2\left\|
		(B(u(t_m))-B(u_h^m))\Delta W_m
		\right\|_{L^2}^2
		\|\varepsilon_u^{m}\|_{L^2}^2\\\nonumber
		&\qquad+\left(
		\int_{t_m}^{t_{m+1}}
		(B(u(s))-B(u_h^m))\,dW(s),
		\varepsilon_u^m
		\right)
		\|\varepsilon_u^{m}\|_{L^2}^2 \\\nonumber
			&\leq 16\left\|
		(B(u(t_m))-B(u_h^m))\Delta W_m
		\right\|_{L^2}^4
		\\\nonumber
		&\qquad+8\left\|
		\int_{t_m}^{t_{m+1}}
		(B(u(s))-B(u(t_m)))\,dW(s)\right\|^2_{L^2}
		\|\varepsilon_u^m\|^2_{L^2}  \\\nonumber
		&\qquad+8\left\|
		\int_{t_m}^{t_{m+1}}
		(B(u(t_m))-B(u_h^m))\,dW(s)\right\|^2_{L^2}
		\|\varepsilon_u^m\|^2_{L^2} + 
		\frac18\left(\|\varepsilon_u^{m+1}\|_{L^2}^2 - \|\varepsilon_u^{m}\|_{L^2}^2\right)^2\\\nonumber
		&\qquad +2\left\|
		(B(u(t_m))-B(u_h^m))\Delta W_m
		\right\|_{L^2}^2
		\|\varepsilon_u^{m}\|_{L^2}^2\\\nonumber
		&\qquad+\left(
		\int_{t_m}^{t_{m+1}}
		(B(u(s))-B(u_h^m))\,dW(s),
		\varepsilon_u^m
		\right)
		\|\varepsilon_u^{m}\|_{L^2}^2 \\\nonumber
		&= 16\left\|
		(B(u(t_m))-B(u_h^m))\Delta W_m
		\right\|_{L^2}^4
		 \\\nonumber
		&\qquad +10\left\|
		(B(u(t_m))-B(u_h^m))\Delta W_m
		\right\|_{L^2}^2
		\|\varepsilon_u^{m}\|_{L^2}^2\\\nonumber
		&\qquad+\left(
		\int_{t_m}^{t_{m+1}}
		(B(u(s))-B(u_h^m))\,dW(s),
		\varepsilon_u^m
		\right)
		\|\varepsilon_u^{m}\|_{L^2}^2 \\\nonumber
		&\qquad+8\left\|
		\int_{t_m}^{t_{m+1}}
		(B(u(s))-B(u(t_m)))\,dW(s)\right\|^2_{L^2}
		\|\varepsilon_u^m\|^2_{L^2}  + 
		\frac18\left(\|\varepsilon_u^{m+1}\|_{L^2}^2 - \|\varepsilon_u^{m}\|_{L^2}^2\right)^2.
	\end{align}
	
	One of the last two terms on the right-hand side of \eqref{eq3.56}, $8\left\|
	\int_{t_m}^{t_{m+1}}
	(B(u(s))-B(u(t_m)))\,dW(s)\right\|^2_{L^2}
	\|\varepsilon_u^m\|^2_{L^2} $, will be controlled by using the H\"older inequality from Lemma \ref{Lemma_Holder}, while $\frac18\left(\|\varepsilon_u^{m+1}\|_{L^2}^2 - \|\varepsilon_u^{m}\|_{L^2}^2\right)^2$ is absorbed to the left-hand side. In addition, the first three terms on the right-hand of \eqref{eq3.56} will be modified to generate the martingale processes in the using of the stochastic Gronwall inequality later. To the end, we can continue to analyze $III$ by adding and subtracting appropriate terms as follows:
	\begin{align}\label{eq3.57}
		III &\leq 16\left\|
		(B(u(t_m))-B(u_h^m))\Delta W_m
		\right\|_{L^2}^4 - 48 k^2\|B(u(t_{m})) - B(u_h^m)\|^4_{L^2}
		\\\nonumber
		&\qquad +10\left\|
		(B(u(t_m))-B(u_h^m))\Delta W_m
		\right\|_{L^2}^2
		\|\varepsilon_u^{m}\|_{L^2}^2 - 10k\|B(u(t_m)) -B(u_h^m)\|^2_{L^2}\|\varepsilon_{u}^m\|^2_{L^2}\\\nonumber
		&\qquad+\left(
		\int_{t_m}^{t_{m+1}}
		(B(u(s))-B(u_h^m))\,dW(s),
		\varepsilon_u^m
		\right)
		\|\varepsilon_u^{m}\|_{L^2}^2 \\\nonumber
		&\qquad + 48 k^2\|B(u(t_{m})) - B(u_h^m)\|^4_{L^2} + 10k\|B(u(t_m)) -B(u_h^m)\|^2_{L^2}\|\varepsilon_{u}^m\|^2_{L^2}\\\nonumber
		&\qquad+8\left\|
		\int_{t_m}^{t_{m+1}}
		(B(u(s))-B(u(t_m)))\,dW(s)\right\|^2_{L^2}
		\|\varepsilon_u^m\|^2_{L^2}  + 
		\frac18\left(\|\varepsilon_u^{m+1}\|_{L^2}^2 - \|\varepsilon_u^{m}\|_{L^2}^2\right)^2\\\nonumber
		&\leq 16\left\|
		(B(u(t_m))-B(u_h^m))\Delta W_m
		\right\|_{L^2}^4 - 48 k^2\|B(u(t_{m})) - B(u_h^m)\|^4_{L^2}
		\\\nonumber
		&\qquad +10\left\|
		(B(u(t_m))-B(u_h^m))\Delta W_m
		\right\|_{L^2}^2
		\|\varepsilon_u^{m}\|_{L^2}^2 - 10k\|B(u(t_m)) -B(u_h^m)\|^2_{L^2}\|\varepsilon_{u}^m\|^2_{L^2}\\\nonumber
		&\qquad+\left(
		\int_{t_m}^{t_{m+1}}
		(B(u(s))-B(u_h^m))\,dW(s),
		\varepsilon_u^m
		\right)
		\|\varepsilon_u^{m}\|_{L^2}^2 \\\nonumber
		&\qquad + 48 k^2C_B^4\|\varepsilon_{u}^m\|^4_{L^2} + Ck^2h^8\|u(t_m)\|^4_{H^2}+ 10kC_B^2\|\varepsilon_{u}^m\|^4_{L^2} + Ckh^4\|u(t_m)\|^2_{H^2}\|\varepsilon_{u}^m\|^2_{L^2}\\\nonumber
		&\qquad+8\left\|
		\int_{t_m}^{t_{m+1}}
		(B(u(s))-B(u(t_m)))\,dW(s)\right\|^2_{L^2}
		\|\varepsilon_u^m\|^2_{L^2}  + 
		\frac18\left(\|\varepsilon_u^{m+1}\|_{L^2}^2 - \|\varepsilon_u^{m}\|_{L^2}^2\right)^2.
	\end{align}
	
Substituting the above estimates of $I$, $II$, and $III$ into \eqref{eq_3.15}, absorbing the like terms on the right-hand side into the left-hand side, summing over $m=0,\ldots,\ell$ for any $0\leq\ell<M$, and multiplying the resulting inequality by $4$, we obtain
\begin{align}\label{eq3.59}
	&\|\varepsilon_u^{\ell+1}\|_{L^2}^4
	+\frac12\sum_{m=0}^{\ell}
	\left(\|\varepsilon_u^{m+1}\|_{L^2}^2-\|\varepsilon_u^m\|_{L^2}^2\right)^2\\\nonumber
	&\qquad+\sum_{m=0}^{\ell}
	\|\varepsilon_u^{m+1}-\varepsilon_u^m\|_{L^2}^2
	\|\varepsilon_u^{m+1}\|_{L^2}^2
	+\frac{5k}{2}\sum_{m=0}^{\ell}
	\|\nab\varepsilon_u^{m+1}\|_{L^2}^2
	\|\varepsilon_u^{m+1}\|_{L^2}^2\\\nonumber
	&\leq \mathcal R_\ell+\mathcal M_\ell
	+\sum_{m=0}^{\ell}\mathcal G_m\|\varepsilon_u^m\|_{L^2}^4
	+\sum_{m=0}^{\ell}\widetilde{\mathcal G}_m
	\|\varepsilon_u^{m+1}\|_{L^2}^4,
\end{align}
where
\begin{align*}
	\mathcal R_\ell
	&:=4\sum_{m=0}^{\ell}\Biggl[
	Ckh^4\|u(t_{m+1})\|_{H^2}^4
	+Ckh^4\|u(t_{m+1})\|_{L^4}^2
	\|u(t_m)\|_{H^2}^2
	\|\varepsilon_u^{m+1}\|_{L^2}^2\\
	&\qquad+Ck\|g(u(t_{m+1}))-g(\cP_u u(t_{m+1}))\|_{L^2}^4
	+Ckh^6\|u(t_{m+1})\|_{L^4}^4
	\|\vsigma(t_{m+1})\|_{H^2}^4\\
	&\qquad+Ckh^6\|u_h^m\|_{L^2}^4
	\|u(t_{m+1})\|_{H^2}^4
	+Ckh^4\|u(t_m)\|_{H^2}^2
	\|\varepsilon_u^{m+1}\|_{L^2}^2\\
	&\qquad+C\int_{t_m}^{t_{m+1}}
	\|\nab(u(t_{m+1})-u(s))\|_{L^2}^4\,ds+C\int_{t_m}^{t_{m+1}}
	\|u(s)\|_{L^4}^4
	\|\vsigma(s)-\vsigma(t_{m+1})\|_{L^4}^4\,ds\\
	&\qquad+C\int_{t_m}^{t_{m+1}}
	\|u(s)-u(t_{m+1})\|_{L^4}^4
	\|\vsigma(t_{m+1})\|_{L^4}^4\,ds\\
	&\qquad+Ck\|u(t_{m+1})\|_{L^4}^4
	\|u(t_{m+1})-u(t_m)\|_{L^2}^4\\
	&\qquad+4C\chi^4\int_{t_m}^{t_{m+1}}
	\|u(t_{m+1})-u(s)\|_{L^4}^4\,ds\|\varepsilon_{u}^m\|^4_{L^2}+C\int_{t_m}^{t_{m+1}}
	\|g(u(s))-g(u(t_{m+1}))\|_{L^2}^4\,ds\\
	&\qquad+\frac{C}{k}\left\|
	\int_{t_m}^{t_{m+1}}
	(B(u(s))-B(u(t_m)))\,dW(s)
	\right\|_{L^2}^4+Ck^2h^8\|u(t_m)\|_{H^2}^4
	+Ckh^4\|u(t_m)\|_{H^2}^2
	\|\varepsilon_u^m\|_{L^2}^2\\
	&\qquad+8\left\|
	\int_{t_m}^{t_{m+1}}
	(B(u(s))-B(u(t_m)))\,dW(s)
	\right\|_{L^2}^2
	\|\varepsilon_u^m\|_{L^2}^2
	\Biggr],
\end{align*}
\begin{align*}
	\mathcal M_\ell
	&:=4\sum_{m=0}^{\ell}
	\left(\Delta M_{1,m}+\Delta M_{2,m}+\Delta M_{3,m}\right),
\end{align*}
where
\begin{align*}
	\Delta M_{1,m}
	&:=16\left\|
	(B(u(t_m))-B(u_h^m))\Delta W_m
	\right\|_{L^2}^4
	-48k^2\|B(u(t_m))-B(u_h^m)\|_{L^2}^4,\\
	\Delta M_{2,m}
	&:=10\left[
	\left\|(B(u(t_m))-B(u_h^m))\Delta W_m\right\|_{L^2}^2
	-k\|B(u(t_m))-B(u_h^m)\|_{L^2}^2
	\right]\|\varepsilon_u^m\|_{L^2}^2,\\
	\Delta M_{3,m}
	&:=\left(
	\int_{t_m}^{t_{m+1}}
	(B(u(s))-B(u_h^m))\,dW(s),
	\varepsilon_u^m
	\right)\|\varepsilon_u^m\|_{L^2}^2,
\end{align*}
and
\begin{align*}
	\mathcal G_m
	&:=8\chi^4\left(
	\int_{t_m}^{t_{m+1}}\|u(s)\|_{L^{q+1}}^{q+1}\,ds
	+(1+|D|)k\right)
	+4Ck^2C_B^4+40kC_B^2,
\end{align*}
while
\begin{align*}
	\widetilde{\mathcal G}_m
	&:=4k\left(
	\frac{65}{16}+2c_B^2
	+4C_L^4\chi^2
	\|u_h^m\|_{L^{2}}^{2}
	\right).
\end{align*}

To apply the stochastic Gronwall inequality, it remains to rewrite the last term on the right-hand side of \eqref{eq3.59} in terms of $\|\varepsilon_u^m\|_{L^2}^4$. To this end, we first separate the last term in the summation and add and subtract $\widetilde{\mathcal G}_\ell\|\varepsilon_u^\ell\|_{L^2}^4$. We obtain
\begin{align}\label{eq3.60}
	\sum_{m=0}^{\ell}\widetilde{\mathcal G}_m
	\|\varepsilon_u^{m+1}\|_{L^2}^4
	&=\sum_{m=0}^{\ell-1}\widetilde{\mathcal G}_m
	\|\varepsilon_u^{m+1}\|_{L^2}^4
	+\widetilde{\mathcal G}_\ell
	\|\varepsilon_u^{\ell+1}\|_{L^2}^4\\\nonumber
	&=\sum_{m=0}^{\ell-1}\widetilde{\mathcal G}_m
	\|\varepsilon_u^{m+1}\|_{L^2}^4
	+\widetilde{\mathcal G}_\ell
	\|\varepsilon_u^\ell\|_{L^2}^4\\\nonumber
	&\qquad+\widetilde{\mathcal G}_\ell
	\left(
	\|\varepsilon_u^{\ell+1}\|_{L^2}^2
	+\|\varepsilon_u^\ell\|_{L^2}^2
	\right)
	\left(
	\|\varepsilon_u^{\ell+1}\|_{L^2}^2
	-\|\varepsilon_u^\ell\|_{L^2}^2
	\right).
\end{align}
Using Young's inequality, we further obtain
\begin{align}\label{eq3.61}
	\sum_{m=0}^{\ell}\widetilde{\mathcal G}_m
	\|\varepsilon_u^{m+1}\|_{L^2}^4
	&\leq
	\sum_{m=0}^{\ell}
	\left(
	\widetilde{\mathcal G}_{m-1}
	+\widetilde{\mathcal G}_m
	\right)
	\|\varepsilon_u^m\|_{L^2}^4\\\nonumber
	&\qquad+\widetilde{\mathcal G}_\ell^2
	\left(
	\|\varepsilon_u^{\ell+1}\|_{L^2}^2
	+\|\varepsilon_u^\ell\|_{L^2}^2
	\right)^2\\\nonumber
	&\qquad+\frac14
	\left(
	\|\varepsilon_u^{\ell+1}\|_{L^2}^2
	-\|\varepsilon_u^\ell\|_{L^2}^2
	\right)^2,
\end{align}
where we set $\widetilde{\mathcal G}_{-1}:=0$. The last term on the right-hand side of \eqref{eq3.61} can be absorbed into the second term on the left-hand side of \eqref{eq3.59}. Therefore, substituting \eqref{eq3.61} into \eqref{eq3.59}, we obtain
\begin{align}\label{eq3.62}
	&\|\varepsilon_u^{\ell+1}\|_{L^2}^4
	+\frac14\sum_{m=0}^{\ell}
	\left(
	\|\varepsilon_u^{m+1}\|_{L^2}^2
	-\|\varepsilon_u^m\|_{L^2}^2
	\right)^2\\\nonumber
	&\qquad+\sum_{m=0}^{\ell}
	\|\varepsilon_u^{m+1}-\varepsilon_u^m\|_{L^2}^2
	\|\varepsilon_u^{m+1}\|_{L^2}^2
	+\frac{5k}{2}\sum_{m=0}^{\ell}
	\|\nab\varepsilon_u^{m+1}\|_{L^2}^2
	\|\varepsilon_u^{m+1}\|_{L^2}^2\\\nonumber
	&\leq
	\widetilde{\mathcal R}_\ell
	+\mathcal M_\ell
	+\sum_{m=0}^{\ell}
	\mathcal X_m\|\varepsilon_u^m\|_{L^2}^4,
\end{align}
where
\begin{align}
	\widetilde{\mathcal R}_\ell
	&:=\mathcal R_\ell
	+\widetilde{\mathcal G}_\ell^2
	\left(
	\|\varepsilon_u^{\ell+1}\|_{L^2}^2
	+\|\varepsilon_u^\ell\|_{L^2}^2
	\right)^2, \label{eq3.63}\\
	\mathcal X_m
	&:=\mathcal G_m
	+\widetilde{\mathcal G}_{m-1}
	+\widetilde{\mathcal G}_m. \label{eq3.64}
\end{align}
More explicitly, 
\begin{align}\label{eq3.65}
	\mathcal X_m
	&=8\chi^4\left(
	\int_{t_m}^{t_{m+1}}
	\|u(s)\|_{L^{q+1}}^{q+1}\,ds
	+(1+|D|)k
	\right)
	+4Ck^2C_B^4+40kC_B^2\\\nonumber
	&\qquad+4k\left(
	\frac{65}{16}+2c_B^2
	+4C_L^4\chi^2
	\|u_h^{m-1}\|_{L^{2}}^{2}
	\right)+4k\left(
	\frac{65}{16}+2c_B^2
	+4C_L^4\chi^2
	\|u_h^m\|_{L^{2}}^{2}
	\right).
\end{align}

Hence, \eqref{eq3.62} is in the form required for the stochastic Gronwall inequality, with $Y_m:=\|\varepsilon_u^m\|_{L^2}^4.$
Moreover, by the Gaussian moment identities
\[
\mE\left[(\Delta W_m)^2\mid\mathcal F_{t_m}\right]=k,
\qquad
\mE\left[(\Delta W_m)^4\mid\mathcal F_{t_m}\right]=3k^2,
\]
and the martingale property of the It\^o integral, we have
\[
\mE\left[\Delta M_{i,m}\mid\mathcal F_{t_m}\right]=0,
\qquad i=1,2,3.
\]
Therefore, $\{\mathcal M_\ell\}_{\ell=0}^{M-1}$ is a martingale with respect to the discrete filtration $\{\mathcal F_{t_{\ell+1}}\}_{\ell=0}^{M-1}$.
	
	Now, we are ready to applying the stochastic Gronwall inequality \eqref{Stochastic_Gronwall} with
	\[
	\alpha=\frac{2}{1+r},\qquad
	\beta=\frac{2}{1-r},\qquad
	0<r<1,
	\] to \eqref{eq3.62}, we obtain
	\begin{align}\label{eq3.66}
		&\left(
		\mE\left[
		\sup_{0\leq\ell<M}
		\|\varepsilon_u^{\ell+1}\|_{L^2}^{4r}
		\right]
		\right)^{\frac{1}{4r}}\\\nonumber
		&\quad\leq
		\left(
		1+\frac{1}{1-\alpha r}
		\right)^{\frac{1}{4\alpha r}}
		\left[
		\mE\left[
		\exp\left(
		\beta r\sum_{m=0}^{M-1}\mathcal X_m
		\right)
		\right]
		\right]^{\frac{1}{4\beta r}}
		\left(
		\mE\left[
		\sup_{0\leq\ell<M}\widetilde{\mathcal R}_\ell
		\right]
		\right)^{\frac14}.
	\end{align}
	
Next, using Lemma~\ref{lemma_expo_moment_H} and Lemma~\ref{lemma_exp_fem_stability}, we estimate the exponential factor on the right-hand side of \eqref{eq3.66}. From \eqref{eq3.65}, we have
\begin{align}
	\sum_{m=0}^{M-1}\mathcal X_m
	&\leq 8\chi^4\int_0^T\|u(s)\|_{L^{q+1}}^{q+1}\,ds
	+32C_L^4\chi^2k\sum_{m=0}^{M-1}\|u_h^m\|_{L^2}^2
	+C_XT,
\end{align}
where $C_X>0$ is independent of $h$ and $k$. Hence, recalling that $\beta=2/(1-r)$, we obtain
\begin{align}\label{eq3.67}
	&\mE\left[
	\exp\left(
	\beta r\sum_{m=0}^{M-1}\mathcal X_m
	\right)
	\right]\leq
	\exp\left(\frac{2r}{1-r}C_XT\right)
	\mE\left[
	\exp\left(
	\frac{16r}{1-r}\chi^4
	\int_0^T\|u(s)\|_{L^{q+1}}^{q+1}\,ds
	\right)\right.\\\nonumber
	&\hspace{5cm}\left.
	\times
	\exp\left(
	\frac{64r}{1-r}C_L^4\chi^2k
	\sum_{m=0}^{M-1}\|u_h^m\|_{L^2}^2
	\right)
	\right].
\end{align}
By the Cauchy--Schwarz inequality,
\begin{align}\label{eq3.68}
	\mE\left[
	\exp\left(
	\beta r\sum_{m=0}^{M-1}\mathcal X_m
	\right)
	\right] &\leq
	\exp\left(\frac{2r}{1-r}C_XT\right)
	\left\{
	\mE\left[
	\exp\left(
	\frac{32r}{1-r}\chi^4
	\int_0^T\|u(s)\|_{L^{q+1}}^{q+1}\,ds
	\right)
	\right]
	\right\}^{\frac12}\\\nonumber
	&\qquad\times
	\left\{
	\mE\left[
	\exp\left(
	\frac{128r}{1-r}C_L^4\chi^2k
	\sum_{m=0}^{M-1}\|u_h^m\|_{L^2}^2
	\right)
	\right]
	\right\}^{\frac12}\\\nonumber
	&\leq \exp\left(\frac{2r}{1-r}C_XT\right) C_{exp,1}^{\frac12}C_{exp,2}^{\frac12}.
\end{align}

Next, we estimate $\mE\left[
\sup_{0\leq\ell<M}\widetilde{\mathcal R}_\ell
\right]$.
To this end, we divide it into two groups according to the spatial and temporal discretization errors:
\begin{align}\label{eq3.74}
\mE\left[
\sup_{0\leq\ell<M}\widetilde{\mathcal R}_\ell
\right]	\leq \mathcal R_{M,h}+\mathcal R_{M,k},
\end{align}
where each tem is defined and estimated as follows:
\begin{align}
	\mathcal R_{M,h}
	&:=4\sum_{m=0}^{M-1}\Biggl[
	Ckh^4\|u(t_{m+1})\|_{H^2}^4+Ckh^4\|u(t_{m+1})\|_{L^4}^2
	\|u(t_m)\|_{H^2}^2
	\|\varepsilon_u^{m+1}\|_{L^2}^2\\\nonumber
	&\qquad+Ck\|g(u(t_{m+1}))
	-g(\cP_u u(t_{m+1}))\|_{L^2}^4+Ckh^6\|u(t_{m+1})\|_{L^4}^4
	\|\vsigma(t_{m+1})\|_{H^2}^4\\\nonumber
	&\qquad+Ckh^6\|u_h^m\|_{L^2}^4
	\|u(t_{m+1})\|_{H^2}^4+Ckh^4\|u(t_m)\|_{H^2}^2
	\|\varepsilon_u^{m+1}\|_{L^2}^2\\\nonumber
	&\qquad+Ck^2h^8\|u(t_m)\|_{H^2}^4
	+Ckh^4\|u(t_m)\|_{H^2}^2
	\|\varepsilon_u^m\|_{L^2}^2
	\Biggr]\\\nonumber
	&\leq CTh^4\Bigl[C_{6,2} + C_{3,2}C_{6,4}(C_{1,2} + \bar{C}_{1,2}) + h^4 C_{4,4(q-1)}C_{6,8} \\\nonumber
	&\qquad+ h^2 C_{3,2}C_{4,4} + h^2 \bar{C}_{1,3}C_{6,4} + C_{6,4}(C_{1,2} + \bar{C}_{1,2}) + kh^4 C_{6,2} + C_{6,2}(C_{1,2} + \bar{C}_{1,2})\Bigr]\\\nonumber
	&:= C_{\mathcal R_{M,h}}h^4,
\end{align}
where we have used Lemma \ref{lemma_fem_highmoment_stability} and Lemma \ref{Stability_PDE}. Similarly, using Lemma \ref{Lemma_Holder}, Lemma \ref{Stability_PDE}, Lemma \ref{lemma_fem_highmoment_stability}, the Buckholder-Davis-Gundy inequality, the It\^o isometry and the Lipschitz assumption \eqref{Assump_Lipschitz}, we also have
\begin{align*}
	\mathcal R_{M,k}
	&:=4\sum_{m=0}^{M-1}\Biggl[
	C\int_{t_m}^{t_{m+1}}
	\|\nab(u(t_{m+1})-u(s))\|_{L^2}^4\,ds+C\int_{t_m}^{t_{m+1}}
	\|u(s)\|_{L^4}^4
	\|\vsigma(s)-\vsigma(t_{m+1})\|_{L^4}^4\,ds\\\nonumber
	&\qquad+C\int_{t_m}^{t_{m+1}}
	\|u(s)-u(t_{m+1})\|_{L^4}^4
	\|\vsigma(t_{m+1})\|_{L^4}^4\,ds+Ck\|u(t_{m+1})\|_{L^4}^4
	\|u(t_{m+1})-u(t_m)\|_{L^2}^4\\\nonumber
	&\qquad+4C\chi^4\int_{t_m}^{t_{m+1}}
	\|u(t_{m+1})-u(s)\|_{L^4}^4\,ds
	\|\varepsilon_u^m\|_{L^2}^4+C\int_{t_m}^{t_{m+1}}
	\|g(u(s))-g(u(t_{m+1}))\|_{L^2}^4\,ds\\\nonumber
	&\qquad+\frac{C}{k}\left\|
	\int_{t_m}^{t_{m+1}}
	(B(u(s))-B(u(t_m)))\,dW(s)
	\right\|_{L^2}^4
	\Biggr]+\widetilde{\mathcal G}_\ell^2
	\left(
	\|\varepsilon_u^{\ell+1}\|_{L^2}^2
	+\|\varepsilon_u^\ell\|_{L^2}^2
	\right)^2\\\nonumber
	&\leq Ck^2\Bigl[\tilde{C}_{1,2} + C_{3,2}\tilde{C}_{2,4} + C_{1,4}\tilde{C}_{2,4} + C_{3,2}\tilde{C}_{1,4} + (C_{1,4} + \bar{C}_{1,4})\tilde{C}_{2,4} \\\nonumber
	&\qquad+ C_{4,4(q-1)}\tilde{C}_{2,4}+ C_B^4\tilde{C}_{1,2} + C(1+\bar{C}_{1,4})(C_{1,4} + \bar{C}_{1,3}) \Bigr]\\\nonumber
	&:=C_{	\mathcal R_{M,k}}k^2.
\end{align*}
Consequently,
\begin{align}\label{eq3.75}
	\mE\left[
	\sup_{0\leq\ell<M}\widetilde{\mathcal R}_\ell
	\right]
	&\leq C_{	\mathcal R_{M,k}}k^2 + C_{	\mathcal R_{M,h}}h^4.
\end{align}

With this, we completes the proof of \eqref{estimate_u_fem_higher} for the case $p=2$.

Lastly, to prove \eqref{estimate_u_fem_higher} for $p>2$, we use an inductive argument as it was done in the proof of \cite[Theorem 3.3 ]{vo2025high}, starting with multiplying the error inequality \eqref{eq_3.14} by $\|\varepsilon_{u}^{m+1}\|^{2^j}_{L^2}$ (assuming \eqref{estimate_u_fem_higher} holds for $p=j$),
and utilize the technique of leveraging the stochastic Gronwall's inequality as shown for the case $p=2$. We skip the derivation to save space and leave it to the interested reader to complete. 

\end{proof}

\medskip

Now, we state the second-moment error estimates for $u_h^m, \vsigma_h^m, v_h^m$.
\begin{corollary}
	Let $(u,\vsigma,v)$ and $(u_h^m,\vsigma_h^m,v_h^m)$ be the variational solution of \eqref{Weak_formulation} and the approximate solution generated by the Main Algorithm, respectively. Assume that $B$ satisfies \eqref{Assump_Lipschitz}, \eqref{Assump_LinearGrowth}, and \eqref{Assump_bounded}. Assume that $u_0\in L^{16}(\Omega;H^2(D))$ and that
	\[
	\mE\left[
	\exp\left(
	\gamma\|u_0\|_{L^2}^2
	\right)
	\right]<\infty
	\qquad\text{for some }\gamma>0.
	\]
	Then there exist constants $k_0>0$ and $\widehat{C}_3>0$, such that, for every $0<k\leq k_0$,
	\begin{align}
		\label{estimate_second_u_fem}
		\left(
		\mE\left[
		\max_{1\leq m\leq M}
		\|u(t_m)-u_h^m\|_{L^2}^{2}
		\right]
		\right)^{\frac{1}{2}}
		+
		\left(
		\mE\left[
		k\sum_{m=1}^{M}
		\|\nab(u(t_m)-u_h^m)\|_{L^2}^2
		\right]
		\right)^{\frac{1}{2}}
		&\leq \widehat{C}_3\left(k^{\frac12}+h\right),\\
		\label{estimate_second_sigma_fem}
		\left(
		\mE\left[
		\max_{1\leq m\leq M}
		\|\vsigma(t_m)-\vsigma_h^m\|_{H^1}^{2}
		\right]
		\right)^{\frac{1}{2}}
		&\leq \widehat{C}_3\left(k^{\frac12}+h\right),\\
		\label{estimate_second_c_fem}
		\left(
		\mE\left[
		\max_{1\leq m\leq M}
		\|v(t_m)-v_h^m\|_{L^2}^{2}
		\right]
		\right)^{\frac{1}{2}}
		&\leq \widehat{C}_3\left(k^{\frac12}+h\right).
	\end{align}
\end{corollary}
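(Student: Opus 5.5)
We plan to derive the corollary from Theorem~\ref{Thm_higher_moment_error}, with a quantitative exponent, and from the summed error inequality \eqref{eq3.15} taken directly in expectation. The stochastic Gronwall inequality only gives sub-quadratic moments of order $2r$ with $r<1$. So the second-moment estimates cannot be read off Theorem~\ref{Thm_global_error}; we instead borrow moments from the higher-moment result.

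\emph{Step 1 ($L^\infty_tL^2_x$ part of \eqref{estimate_second_u_fem}).} Apply Theorem~\ref{Thm_higher_moment_error} with $p=3$ and $r=\tfrac12$. The hypothesis $u_0\in L^{2^{p+1}}(\Omega;H^2(D))=L^{16}(\Omega;H^2(D))$ is exactly what the corollary assumes. This gives
\[
\Bigl(\mE\Bigl[\max_{1\le m\le M}\|u(t_m)-u_h^m\|_{L^2}^{4}\Bigr]\Bigr)^{\frac14}\le \widehat C_2\bigl(k^{\frac12}+h\bigr).
\]
By Jensen's inequality, the same bound holds for the second moment. We will also record the fourth-moment bound on $\max_m\|\varepsilon_u^m\|_{L^2}^4$, which follows from the triangle inequality and \eqref{ineq_interpolation}; Step 2 uses it.

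\emph{Step 2 (gradient part).} Take the expectation of \eqref{eq3.15} at $\ell=M-1$. The martingale term $M_\ell$ has zero mean, as verified in the proof of Theorem~\ref{Thm_global_error}. The deterministic-looking term is controlled by \eqref{eq3.52}, namely $\mE[\sup_\ell F_\ell]\le C(h^2+k)$. The only new term is $\mE[\sum_m G_m\|\varepsilon_u^m\|_{L^2}^2]$, and we will bound it by Cauchy--Schwarz:
\[
\mE\Bigl[\sum_{m} G_m\|\varepsilon_u^m\|_{L^2}^2\Bigr]\le \Bigl(\mE\Bigl[\Bigl(\sum_m G_m\Bigr)^2\Bigr]\Bigr)^{\frac12}\Bigl(\mE\Bigl[\max_m\|\varepsilon_u^m\|_{L^2}^4\Bigr]\Bigr)^{\frac12}.
\]
The second factor is $O\bigl((k^{1/2}+h)^2\bigr)$ by Step 1. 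For the first factor, expand the definition of $G_m$. Its moments are bounded by $\mE[(k\sum_m\|u_h^m\|_{L^2}^2)^2]\le T^2\bar C_{1,2}$ from Lemma~\ref{lemma_fem_highmoment_stability}, and by $\mE[(\int_0^T\|u\|_{L^{q+1}}^{q+1}ds)^2]$, which is finite by Lemma~\ref{lemma_expo_moment_H} (exponential moments dominate polynomial ones). Together these give $\mE[k\sum_m\|\nabla\varepsilon_u^{m+1}\|_{L^2}^2]\le C(k+h^2)$. Adding the projection error $\|\nabla\theta_u^m\|_{L^2}\le Ch\|u(t_m)\|_{H^2}$ from \eqref{ineq_interpolation}, together with Lemma~\ref{Stability_PDE}(f), completes \eqref{estimate_second_u_fem}.

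\emph{Step 3 ($\vsigma$ and $v$).} For $\vsigma$, take expectations of the maximum over $m$ in \eqref{eq3.32}. This bounds $\|\e_{\vsigma}^{m+1}\|_{H^1}^2$ by $\|u(t_{m+1})-u(t_m)\|_{L^2}^2+\|\varepsilon_u^m\|_{L^2}^2+Ch^4\|u(t_m)\|_{H^2}^2$. One point needs care: the maximum over $m$ of the increment term is not directly controlled by Lemma~\ref{Lemma_Holder}(a). We will bound $\max_m a_m\le(\sum_m a_m^2)^{1/2}$ and use the fourth-moment H\"older estimate, which gives $\mE[\max_m\|u(t_{m+1})-u(t_m)\|_{L^2}^2]\le (M\cdot Ck^2)^{1/2}=O(k^{1/2})$. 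That is suboptimal, so instead we will use a higher moment $2p$ with $M^{1/p}k$. This gives $O(k^{1-1/p})$, which is still $\le Ck$ up to arbitrarily small loss. If that loss is unacceptable, we will bypass the issue by writing $u(t_{m+1})-u(t_m)$ through \eqref{eq_u} and using the Burkholder--Davis--Gundy inequality on the stochastic part. Combined with Step 1, \eqref{ineq_interpolation} applied to $\vtheta_{\vsigma}$, and Lemma~\ref{Stability_PDE}, this yields \eqref{estimate_second_sigma_fem}. For $v$, \eqref{eq_discrete_c} and \eqref{eq_c} give $v_h^m=\cP_c(\nabla\cdot\vsigma_h^m+u_h^m)$. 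Hence $\|v(t_m)-v_h^m\|_{L^2}\le\|(I-\cP_c)v(t_m)\|_{L^2}+\|e_{\vsigma}^m\|_{H^1}+\|e_u^m\|_{L^2}$, and \eqref{estimate_second_c_fem} follows.

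The main obstacle is the maximum of the temporal increment in Step 3, together with making sure that Step 2 uses only fourth moments of the error. The latter is precisely why $L^{16}$ integrability of $u_0$ is required.
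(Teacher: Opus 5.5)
Your Steps 1 and 2 are essentially the paper's proof, and they are fine. The $L^\infty_tL^2_x$ bound comes from Theorem~\ref{Thm_higher_moment_error}; your choice $p=3$, $r=\tfrac12$ is in fact the one that matches the $L^{16}$ hypothesis, whereas the paper quotes $p=2$ and $p=4$. The gradient bound comes from taking expectations in \eqref{eq3.15}, using \eqref{eq3.52}, and pairing $\sum_m G_m\|\varepsilon_u^m\|_{L^2}^2$ with fourth moments of the error by Cauchy--Schwarz.

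The gap is in Step 3, and it cannot be closed. Neither of your fixes reaches the stated rate.
\begin{enumerate}
\item Your union bound gives $\mE[\max_m\|u(t_{m+1})-u(t_m)\|_{L^2}^2]=O(k^{1-1/p})$. That is a rate $k^{\frac12-\frac1{2p}}$ for $\vsigma$, not $k^{\frac12}$.
\item The Burkholder--Davis--Gundy fallback does not help. It controls $\sup_t\|\int_0^tB(u)\,dW\|_{L^2}$, not the maximum of the $M$ separate increments $\int_{t_m}^{t_{m+1}}B(u)\,dW$. Applying it interval by interval again requires a union bound.
\end{enumerate}

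In fact the logarithmic loss is real. Take additive noise $B\equiv g_0$ with $g_0$ smooth and nonconstant, so that (A)--(B) hold, and set $S:=\nab(I-\Delta)^{-1}$. Comparing \eqref{eq_sigma} with \eqref{eq_discrete_sigma} shows $\vsigma_h^{m+1}=\cP_{\vsigma}Su_h^m$. Hence
\[
\vsigma(t_{m+1})-\vsigma_h^{m+1}=S\bigl(u(t_{m+1})-u(t_m)\bigr)+(I-\cP_{\vsigma})Su(t_m)+\cP_{\vsigma}S\bigl(u(t_m)-u_h^m\bigr).
\]
Here $u(t_{m+1})-u(t_m)=g_0\,\Delta W_m+\int_{t_m}^{t_{m+1}}f\,ds$, where $f$ is the drift and $\max_m\|\int_{t_m}^{t_{m+1}}f\,ds\|_{L^2}^2\le k\int_0^T\|f\|_{L^2}^2\,ds$. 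Since $\mE[\max_m|\Delta W_m|^2]\ge ck\log(T/k)$, you get, even granting the $u$-estimate in \eqref{estimate_second_u_fem},
\[
\Bigl(\mE\Bigl[\max_{1\le m\le M}\|\vsigma(t_m)-\vsigma_h^m\|_{H^1}^2\Bigr]\Bigr)^{\frac12}\ge c\,\|Sg_0\|_{H^1}\sqrt{k\log(T/k)}-C\bigl(k^{\frac12}+h\bigr).
\]
This contradicts \eqref{estimate_second_sigma_fem} for $h=k^{1/2}$ and $k$ small. The same happens for $v$, whose error contains $\cP_c\nab\cdot Sg_0\,\Delta W_{m-1}$ at leading order.

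So the time lag built into the splitting ($\vsigma_h^{m+1}$ is computed from $u_h^m$) makes \eqref{estimate_second_sigma_fem}--\eqref{estimate_second_c_fem} false as stated, with the maximum inside the expectation. Two versions are provable:
\begin{enumerate}
\item the bound with $\max_m$ outside $\mE$, directly from \eqref{eq3.32}, Lemma~\ref{Lemma_Holder}(a) and your Step 1;
\item the max-inside bound with an extra factor $\sqrt{\log(1/k)}$, using sub-Gaussian tails of the increments from \eqref{Assump_bounded} plus a union bound.
\end{enumerate}

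The paper's proof simply cites Lemma~\ref{Lemma_Holder}, which is a fixed-time estimate. It therefore has exactly the hole you flagged, and so do \eqref{estimate_sigma_fem}--\eqref{estimate_c_fem} in Theorem~\ref{Thm_global_error}.
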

\begin{proof}
	First, using Theorem \ref{Thm_higher_moment_error} with $r =\frac{1}{2}$ and $p=2$, we obtain $\eqref{estimate_second_u_fem}_1$. Then, using this $\eqref{estimate_second_u_fem}_1$ and Lemma \ref{Lemma_Holder}, we can derive the error estimates \eqref{estimate_second_sigma_fem} and \eqref{estimate_second_c_fem} easily by using the error equations of the scheme \eqref{eq_discrete_sigma} and \eqref{eq_discrete_c} and \eqref{eq_sigma} and \eqref{eq_c}. 
	
	Next, to prove the energy error $\eqref{estimate_second_u_fem}_2$, we use $\eqref{estimate_second_u_fem}_1$ and the inequality \eqref{eq3.15} as follows: Taking the expectation to \eqref{eq3.15}, we obtain
	\begin{align}
		\mE\left[k\sum_{m=0}^{M-1}\|\nab\varepsilon_u^{m+1}\|^2_{L^2}\right] \leq \mE\left[F_M\right] + \mE\left[\sum_{m=0}^{M-1} G_m\|\varepsilon_{u}^m\|^2_{L^2}\right].
	\end{align}
	
	Using the estimate \eqref{eq3.52}, we have 
	\begin{align}
		\mE\left[F_M\right] &\leq C_{Q_1}h^2 + (C_{Q_2} + C_{Q_3})k.
	\end{align}
	
	next, using the Cauchy-Schwarz inequality, Lemma \ref{Stability_PDE}, Lemma \ref{lemma_fem_highmoment_stability} and Theorem \ref{Thm_higher_moment_error} with $r=\frac12$ and $p =4$, we get
	\begin{align}
		\mE\left[\sum_{m=0}^{M-1} G_m\|\varepsilon_{u}^m\|^2_{L^2}\right] &\leq \sum_{m=0}^{M-1} \left(\mE\left[G_m^2\right]\right)^{\frac12}\left(\mE\left[\|\varepsilon_{u}^m\|^4_{L^2}\right]\right)^{\frac12} \leq C(1 + C_{4,2} + \bar{C}_{1,2}) \widehat{C}_2(k + h^2).
	\end{align}
	
	This completes the proof.
	
\end{proof}

\section{Conclusion}\label{sec5}

In this paper, we have established a rigorous strong convergence analysis for a fully discrete splitting mixed finite element method applied to a stochastic Keller--Segel system with polynomial logistic growth and multiplicative noise. We first proved the well-posedness of the variational solution and derived high-moment regularity, and exponential stability estimates for the exact and numerical solutions. By combining these estimates with a stochastic Gronwall framework, we successfully controlled the nonlinear chemotactic coupling and multiplicative stochastic forcing directly in expectation, without relying on localization arguments. Consequently, we established strong convergence in high-moment norms with the optimal rates $O(k^{1/2} + h)$. These theoretical rates are consistent with the numerical experiments for the same splitting mixed finite element scheme reported in \cite[Section 4]{thieu2026splitting}, which provide numerical evidence of their optimality. The results provide a rigorous foundation for the strong convergence of splitting mixed finite element approximations of stochastic chemotaxis systems.

	\section*{Declarations of Funding}\,
	
	The author Liet Vo was supported by the National Science Foundation (NSF) under Grant No. DMS-2530211.
	
	\section*{Data Availability} \,
	
	Data sharing is not applicable to this article as no datasets were generated or analysed during the current study.
	
	\section*{Declaration of Conflict of Interest}\,
	
	The authors have no conflict of interest.

	\appendix
	
	\section{Useful results}\label{appendxiA}
	
	First, we state the following discrete stochastic Gronwall inequality from \cite[Theorem 1]{kruse2018discrete}, which plays a vital role in our error analysis in Section \ref{sec4}.
	
	\begin{lemma}{\cite[Theorem 1]{kruse2018discrete}}\label{Stochastic_Gronwall}
		Let $\left\{M_n\right\}_{n \in \mathbb{N}}$ be an $\left\{\mathcal{F}_n\right\}_{n \in \mathbb{N}}$-martingale satisfying $M_0=0$ on a filtered probability space $\left(\Omega, \mathcal{F},\left\{\mathcal{F}_n\right\}_{n \in \mathbb{N}}, \mathbb{P}\right)$. Let $\left\{X_n\right\}_{n \in \mathbb{N}},\left\{F_n\right\}_{n \in \mathbb{N}}$, and $\left\{G_n\right\}_{n \in \mathbb{N}}$ be sequences of nonnegative and adapted random variables with $\mathbb{E}\left[X_0\right]<\infty$ such that
		\begin{align}\label{ineq2.3}
			X_n \leq F_n+M_n+\sum_{k=0}^{n-1} G_{\ell} X_{\ell} \quad \text { for all } n \in \mathbb{N}.
		\end{align}
		Then, for any $q \in(0,1)$ and a pair of conjugate numbers $\alpha, \beta \in[1, \infty]$, i.e.,  $\frac{1}{\alpha}+\frac{1}{\beta}=1$, satisfying $q \alpha<1$,  there holds 
		\begin{align}\label{ineq2.4}
			\mathbb{E}\left[\sup _{0 \leq \ell \leq n} X_{\ell}^q\right] \leq\left(1+\frac{1}{1-\alpha q}\right)^{\frac{1}{\alpha}}\left\|\prod_{\ell=0}^{n-1}\left(1+G_{\ell}\right)^q\right\|_{L^{\beta}(\Omega)}\left(\mathbb{E}\left[\sup _{0 \leq \ell \leq n} F_{\ell}\right]\right)^q.
		\end{align}
	\end{lemma}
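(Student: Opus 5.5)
The plan is the classical integrating-factor argument followed by a maximal inequality for a nonnegative process that is dominated by an increasing process plus a martingale; the result is that of \cite[Theorem 1]{kruse2018discrete}. First I replace $F$ by its running maximum $F^*_n:=\sup_{0\leq \ell\leq n}F_\ell$, which is adapted, nondecreasing and still satisfies \eqref{ineq2.3}. Next I set
\[
A_n:=F^*_n+M_n+\sum_{\ell=0}^{n-1}G_\ell X_\ell\geq X_n,
\qquad
\mathcal E_n:=\prod_{\ell=0}^{n-1}(1+G_\ell),\qquad \mathcal E_0=1 .
\]
Because $G$ is adapted, $\mathcal E_{n+1}$ is $\mathcal F_n$-measurable, so the sequence $\{\mathcal E_n\}$ is predictable, nondecreasing and satisfies $\mathcal E_n\geq 1$.

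Using $X_n\leq A_n$, the increments satisfy $A_{n+1}-A_n\leq (F^*_{n+1}-F^*_n)+(M_{n+1}-M_n)+G_nA_n$. Dividing by $\mathcal E_{n+1}=\mathcal E_n(1+G_n)$ gives the telescoping bound
\[
\frac{A_{n+1}}{\mathcal E_{n+1}}-\frac{A_n}{\mathcal E_n}\leq \frac{F^*_{n+1}-F^*_n}{\mathcal E_{n+1}}+\frac{M_{n+1}-M_n}{\mathcal E_{n+1}} .
\]
Summing and using $\mathcal E\geq1$ together with the monotonicity of $F^*$, I obtain
\[
0\leq \frac{X_n}{\mathcal E_n}\leq \frac{A_n}{\mathcal E_n}\leq F^*_n+N_n=:Z_n,
\qquad
N_n:=\sum_{\ell=0}^{n-1}\frac{M_{\ell+1}-M_\ell}{\mathcal E_{\ell+1}} .
\]
Since $1/\mathcal E_{\ell+1}$ is predictable and bounded by $1$, the process $N$ is a genuine martingale transform: it is integrable because $|\Delta N|\leq|\Delta M|$, and it satisfies $N_0=0$. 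The process $Z$ is nonnegative because it dominates $X_n/\mathcal E_n\geq0$.

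The key step, and the one I expect to be the main obstacle, is the maximal estimate
\[
\mathbb E\Big[\sup_{\ell\leq n}Z_\ell^{p}\Big]\leq \Big(1+\frac{1}{1-p}\Big)\big(\mathbb E[F^*_n]\big)^{p},\qquad 0<p<1,
\]
for a nonnegative process of the form "adapted increasing plus martingale". My first attempt is a Lenglart-type argument.

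1. For levels $c,d>0$, introduce the stopping times $\tau=\inf\{\ell: Z_\ell\geq c\}$ and $\sigma=\inf\{\ell: F^*_{\ell+1}> d\}$. The second one is a stopping time because $F^*_{\ell+1}$ is only adapted and not predictable, so one has to look one step ahead.
2. Optional stopping applied to $N$, combined with $Z\geq0$, yields $c\,\mathbb P(\tau\leq n\wedge\sigma)\leq \mathbb E[F^*_{n\wedge\sigma\wedge\tau}]\leq \mathbb E[F^*_n\wedge d]$, after possibly enlarging $d$ slightly.
3. Therefore $\mathbb P(\sup_{\ell\leq n} Z_\ell\geq c)\leq c^{-1}\mathbb E[F^*_n\wedge d]+\mathbb P(F^*_n>d)$.
4. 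I take $d=c$, multiply by $p\,c^{p-1}$ and integrate over $c\in(0,\infty)$ using the layer-cake formula. The two terms produce $\frac{1}{1-p}\mathbb E[(F^*_n)^p]$ and $\mathbb E[(F^*_n)^p]$ respectively.
5. Jensen's inequality, $\mathbb E[(F^*_n)^p]\leq(\mathbb E F^*_n)^p$, then gives the displayed estimate.

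The delicate points are the correct handling of the one-step look-ahead in $\sigma$ and the constant; I would follow \cite{kruse2018discrete} closely if the naive stopping argument leaks a factor.

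Finally, I combine the pieces. Since $\mathcal E$ is nondecreasing, $\sup_{\ell\leq n}X_\ell^q\leq \mathcal E_n^q\sup_{\ell\leq n}Z_\ell^q$. Hölder's inequality with the conjugate pair $(\alpha,\beta)$ gives
\[
\mathbb E\Big[\sup_{\ell\leq n} X_\ell^q\Big]\leq \|\mathcal E_n^q\|_{L^\beta(\Omega)}\Big(\mathbb E\Big[\sup_{\ell\leq n}Z_\ell^{\alpha q}\Big]\Big)^{1/\alpha}.
\]
I then apply the maximal estimate with $p=\alpha q<1$, which bounds the last factor by $\big(1+\frac1{1-\alpha q}\big)^{1/\alpha}(\mathbb E[F^*_n])^{q}$. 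Since $F^*_n=\sup_{\ell\leq n}F_\ell$ and $\mathcal E_n=\prod_{\ell=0}^{n-1}(1+G_\ell)$, this is exactly \eqref{ineq2.4}.
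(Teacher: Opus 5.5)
Your reduction is sound: replacing $F$ by $F^*$, introducing the integrating factor $\mathcal E_n$, deriving $X_n/\mathcal E_n\le Z_n=F^*_n+N_n$ with $N$ a martingale transform by a bounded predictable integrand, and the final H\"older step with $p=\alpha q$ are all correct. The paper itself only quotes \cite[Theorem 1]{kruse2018discrete} without proof, so there is no in-paper argument to compare against.

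\textbf{The gap is in your proof of the maximal estimate.}

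\emph{The time $\sigma$ is not a stopping time.} With $\sigma=\inf\{\ell:F^*_{\ell+1}>d\}$ you have $\{\sigma\le\ell\}=\{F^*_{\ell+1}>d\}\in\mathcal F_{\ell+1}$, not $\mathcal F_\ell$. Looking one step ahead is exactly what destroys the stopping property. Lenglart's truncated domination inequality genuinely needs a predictable dominating process.

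\emph{Steps 3--4 cannot be repaired by adjusting constants, because their conclusions are false for adapted $F^*$.} Take the following one-step example:
\begin{itemize}
\item $n=1$ and $\mathcal F_0$ trivial;
\item $F_0=0$ and $F_1=L\mathbf 1_E$ with $\mathbb P(E)=\epsilon$;
\item $M_0=0$, $M_1=\epsilon L-F_1$, and $G\equiv0$;
\item $X_0=0$ and $X_1=\epsilon L$.
\end{itemize}
All hypotheses hold, and $Z_1\equiv\epsilon L$.
\begin{itemize}
\item With $c=d=\epsilon L$, the left side of your step 3 equals $1$, while the right side equals $2\epsilon$.
\item Likewise $\mathbb E[\sup_\ell Z_\ell^p]=(\epsilon L)^p$, while $\mathbb E[(F^*_1)^p]=\epsilon L^p$, and the ratio $\epsilon^{p-1}$ is unbounded as $\epsilon\to0$.
\end{itemize}
So no bound of the form $\mathbb E[\sup_\ell Z_\ell^p]\le C_p\,\mathbb E[(F^*_n)^p]$ can hold. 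Any route that passes through $\mathbb E[(F^*_n)^p]$ and only then applies Jensen is doomed. The estimate is true only with $(\mathbb E F^*_n)^p$ on the right.

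\textbf{The fix is simpler than what you attempted: no truncation is needed.}
\begin{enumerate}
\item Use the genuine stopping time $\tau_c=\inf\{\ell:Z_\ell\ge c\}$.
\item Since $Z\ge0$, optional stopping at the bounded time $\tau_c\wedge n$ gives $c\,\mathbb P(\sup_{\ell\le n}Z_\ell\ge c)\le\mathbb E[F^*_{\tau_c\wedge n}]\le\mathbb E[F^*_n]=:a$.
\item Split the layer-cake integral at $c=a$, using the bound $1$ below $a$ and the bound $a/c$ above $a$:
\[
\mathbb E\Big[\sup_{\ell\le n}Z_\ell^p\Big]\le\int_0^a pc^{p-1}\,dc+a\int_a^\infty pc^{p-2}\,dc=\frac{a^p}{1-p}\le\Big(1+\frac{1}{1-p}\Big)a^p .
\]
\end{enumerate}
This is exactly the maximal estimate you need, and Jensen is no longer required. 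With it, the rest of your argument goes through unchanged.
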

	
	\section{Proof of Theorem \eqref{Thm_wellposedness}}\label{appendixB}
	
	We establish the existence and uniqueness of the variational solution by using a standard compactness approach for stochastic partial differential equations. More precisely, following the framework of \cite{li2026martingale,nguyen2026finite}, we first construct a martingale solution to \eqref{eq_2.1} by means of a Galerkin approximation, uniform a priori estimates, and a compactness argument. We then establish the pathwise uniqueness of the martingale solution. Combining the existence of a martingale solution with pathwise uniqueness, we obtain the existence and uniqueness of the variational solution stated in Theorem~\ref{Thm_wellposedness}.
	
	The two main ingredients in the proof of Theorem~\ref{Thm_wellposedness} are summarized in the following lemmas.
	
\begin{lemma}\label{Lemma_martingale_solution}
Assume that $B$ satisfies \eqref{Assump_Lipschitz} and \eqref{Assump_LinearGrowth} with $m=0$, and let $u_0\in L^2(\Omega;L^2(D))$. Then there exists a martingale solution to \eqref{eq_2.1}. More precisely, there exist a stochastic basis $	(\widetilde{\Omega},\widetilde{\mathcal F},
\{\widetilde{\mathcal F}_t\}_{t\geq0},
\widetilde{\mathbb P}),$
	a real-valued $\{\widetilde{\mathcal F}_t\}_{t\geq0}$-Wiener process $\widetilde W$, and progressively measurable processes $(\widetilde u,\widetilde v)$ such that
	\begin{align}
		\widetilde u&\in L^2\left(\widetilde{\Omega};C([0,T];L^2(D))\right)
		\cap L^2\left(\widetilde{\Omega};L^2(0,T;H^1(D))\right) \cap L^{q+1}\left(\widetilde{\Omega};L^{q+1}(0,T;L^{q+1}(D))\right),\\
		\widetilde v&\in L^2\left(\widetilde{\Omega};C([0,T];H^2(D))\right),
	\end{align}
	and, for every $t\in[0,T]$, $\widetilde{\mathbb P}$-a.s.,
	\begin{subequations}\label{eq_martingale_solution}
		\begin{align}
			(\widetilde u(t),\phi)
			&=(u_0,\phi)
			-\int_0^t(\nab\widetilde u(s),\nab\phi)\,ds
			+\chi\int_0^t(\widetilde u(s)\nab\widetilde v(s),\nab\phi)\,ds\\\nonumber
			&\qquad+\int_0^t(g(\widetilde u(s)),\phi)\,ds
			+\int_0^t(B(\widetilde u(s)),\phi)\,d\widetilde W(s),\\
			(\nab\widetilde v(t),\nab\psi)+(\widetilde v(t),\psi)
			&=(\widetilde u(t),\psi),
		\end{align}
	\end{subequations}
	for all $\phi,\psi\in H^1(D)$.
\end{lemma}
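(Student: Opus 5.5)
We follow the standard stochastic compactness route via a spectral Galerkin scheme. Let $\{e_j\}$ be the periodic eigenfunctions of $-\Delta+I$ on $D$, $P_N$ the $L^2$-projection onto $H_N=\mathrm{span}\{e_1,\dots,e_N\}$. We solve the finite-dimensional It\^o system
\[
du_N=\bigl[\Delta u_N-\chi P_N\nab\cdot(u_N\nab v_N)+P_Ng(u_N)\bigr]dt+P_NB(u_N)\,dW,
\]
where $v_N=(I-\Delta)^{-1}u_N$ and $u_N(0)=P_Nu_0$. The coefficients are locally Lipschitz, so a local strong solution exists up to a stopping time. The It\^o computation of Lemma~\ref{lemma_expo_moment_H} then gives global existence and uniform bounds, since $P_N$ is self-adjoint and commutes with $\Delta$. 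Concretely, we repeat \eqref{eq2.4}--\eqref{eq2.8} with \eqref{Assump_LinearGrowth} replacing \eqref{Assump_bounded}, use BDG, and obtain
\[
\mE\Bigl[\sup_t\|u_N\|_{L^2}^{2}+\int_0^T\|\nab u_N\|_{L^2}^2+\int_0^T\|u_N\|_{L^{q+1}}^{q+1}\Bigr]\le C
\]
uniformly in $N$. By \eqref{elliptic_estimate}, $v_N$ is bounded in $L^2(\Omega;C([0,T];H^2))$.

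Next we derive a uniform time-regularity bound to get tightness. Writing $u_N(t)=u_N(0)+\int_0^t(\cdots)\,ds+\int_0^tP_NB(u_N)\,dW$, the drift is bounded in $L^{r}(\Omega;W^{1,r}(0,T;H^{-s}))$ for some $s$ large and $r=(q+1)/q$. Here $u^q$ is controlled by the $L^{q+1}$ bound, $u\nab v$ by $\|u\|_{L^4}\|u\|_{L^4}$, and $\Delta u$ via $L^2H^1$. The stochastic integral lies in $L^2(\Omega;W^{\alpha,2}(0,T;L^2))$ for $\alpha<1/2$ by the standard fractional estimate. By the Aubin--Lions--Simon lemma, the laws of $u_N$ are tight in $L^2(0,T;L^2)\cap C([0,T];H^{-s})$. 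The laws of $W$ are trivially tight.

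We then apply the Skorokhod--Jakubowski theorem to obtain a new basis with $\widetilde u_N\to\widetilde u$ a.s. in these topologies and $\widetilde W_N\to\widetilde W$. The linear terms pass to the limit easily. The chemotactic term follows from strong $L^2L^2$ convergence of $\widetilde u_N$ together with strong convergence of $\nab\widetilde v_N$ in $L^2L^4$ (elliptic regularity). The nonlinearity $g$ follows from a.e. convergence plus uniform integrability from the $L^{q+1}$ bound (Vitali). The stochastic integral is handled by the standard convergence lemma for It\^o integrals (Debussche--Glatt-Holtz--Temam type), using \eqref{Assump_Lipschitz}. Identifying $\widetilde u$ as a solution of \eqref{eq_martingale_solution} then follows by the martingale-identification argument. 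Continuity in $L^2$ follows from the It\^o formula in the Gelfand triple $H^1\subset L^2\subset (H^1)'$, which is applicable because $g(\widetilde u)\in L^{r}L^{r}$ pairs with $\widetilde u\in L^{q+1}L^{q+1}$.

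The main obstacle is the tightness and limit passage for the polynomial term $u^q$ with $q\ge3$, which lives only in $L^{(q+1)/q}$. This forces working in the larger dual space $H^{-s}$ with $s>1$ in 2D so that $L^{r}\hookrightarrow H^{-s}$, together with a Vitali argument rather than strong convergence. I would handle it first, since the rest is standard.
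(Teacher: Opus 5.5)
Your route (spectral Galerkin, uniform energy bounds from the dissipativity $(g(u),u)=\|u\|_{L^2}^2-\|u\|_{L^{q+1}}^{q+1}$, stochastic compactness with Skorokhod--Jakubowski, martingale identification) is the one the paper takes. The paper only sketches it by reference to \cite[Theorem~2.1]{li2026martingale}, so your proposal is more detailed than the paper's proof.

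\textbf{One step needs repair.} You only have $u_0\in L^2(\Omega;L^2(D))$, so the stochastic integral is bounded in $W^{\alpha,2}(0,T;L^2(D))$ with $\alpha<\frac12$. This space does not embed into $C([0,T];H^{-s})$: that embedding needs $\alpha p>1$, i.e.\ moments of order $p>2$, which the hypothesis does not give. Tightness in $C([0,T];H^{-s})$ therefore has to come from elsewhere. Two options:
\begin{itemize}
\item use the Aldous criterion, as in Brze\'zniak--Motyl, which needs only the second-moment bounds you already have; or
\item drop that topology, identify the equation for a.e.\ $t$ from the $L^2(0,T;L^2(D))$ convergence, and extend to every $t$ by continuity of both sides (including $u_N(0)$ in the Skorokhod tuple to identify the initial datum).
\end{itemize}

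\textbf{Two minor points.}
\begin{itemize}
\item $s=1$ already suffices. In two dimensions $H^1(D)\hookrightarrow L^{q+1}(D)$, hence $L^{(q+1)/q}(D)\hookrightarrow H^{-1}(D)$, so nothing forces $s>1$.
\item The Vitali argument for $u^q$ must be run on $\widetilde\Omega\times(0,T)\times D$, not pathwise. The $L^{q+1}$ bound holds only in expectation, so there is no pathwise uniform integrability.
\end{itemize}
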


\begin{proof}
	The proof follows by adapting the martingale solution construction in
	\cite[Theorem~2.1]{li2026martingale} to the present stochastic
	Keller--Segel system. The principal diffusion and chemotactic
	coupling have the same structure, while the nonlinear reaction
	term is replaced by the polynomial logistic nonlinearity
	$g(u)=u-u^q$. In particular, the dissipative property
	\[
	(g(u),u)
	=
	\|u\|_{L^2}^2-\|u\|_{L^{q+1}}^{q+1}
	\]
	provides the necessary control of the reaction term in the
	uniform energy estimates. The existence of a martingale
	solution then follows through the Galerkin approximation,
	stochastic compactness, and martingale identification
	arguments developed in \cite[Theorem~2.1]{li2026martingale},
	with the corresponding modifications for the logistic
	nonlinearity and periodic boundary conditions.
	We omit the remaining details.
\end{proof}
	
\begin{lemma}\label{Lemma_pathwise_uniqueness}
	Assume that $B$ satisfies \eqref{Assump_Lipschitz} with $m=0$. Let $(u_1,v_1)$ and $(u_2,v_2)$ be two martingale solutions of \eqref{eq_2.1} defined on the same stochastic basis, adapted to the same filtration, driven by the same Wiener process $W$, and satisfying
	\[
	u_1(0)=u_2(0)=u_0,\qquad \mP\text{-a.s.}
	\]
	Then $(u_1,v_1)$ and $(u_2,v_2)$ are indistinguishable, that is,
	\begin{align}
		\mP\left(
		u_1(t)=u_2(t)\ \text{and}\ v_1(t)=v_2(t)
		\text{ for all }t\in[0,T]
		\right)=1.
	\end{align}
	In particular, pathwise uniqueness holds for \eqref{eq_2.1}.
\end{lemma}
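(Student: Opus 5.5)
Subtract the two variational formulations and run an energy estimate for the difference. Set $w:=u_1-u_2$ and $z:=v_1-v_2$. Since both solutions share the stochastic basis, the Wiener process and the initial datum, $w(0)=0$, and $w$ satisfies, for all $\phi\in H^1(D)$,
\begin{align*}
(w(t),\phi)&=-\int_0^t(\nab w,\nab\phi)\,ds+\chi\int_0^t(u_1\nab v_1-u_2\nab v_2,\nab\phi)\,ds\\
&\qquad+\int_0^t(g(u_1)-g(u_2),\phi)\,ds+\int_0^t(B(u_1)-B(u_2),\phi)\,dW(s),
\end{align*}
while $z$ solves the elliptic problem $-\Delta z+z=w$. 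By \eqref{elliptic_estimate}, \eqref{sobolev_embedding} and the Ladyzhenskaya inequality, this gives $\|z\|_{H^2}\le C\|w\|_{L^2}$ and $\|\nab z\|_{L^4}\le C\|w\|_{L^2}$. Uniqueness of $v$ therefore follows from uniqueness of $u$, and it suffices to show $w\equiv0$.

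Apply the Itô formula to $\|w(t)\|_{L^2}^2$, which is justified by the regularity in Theorem~\ref{Thm_wellposedness} via the standard Gelfand-triple Itô formula. The terms are handled as follows.
\begin{itemize}
\item The reaction term is one-sided Lipschitz: since $q$ is odd, $(u_1^q-u_2^q)(u_1-u_2)\ge0$, so $(g(u_1)-g(u_2),w)\le\|w\|_{L^2}^2$.
\item The Itô correction is bounded by $C_B^2\|w\|_{L^2}^2$ using \eqref{Assump_Lipschitz}.
\item For the chemotaxis term, write $u_1\nab v_1-u_2\nab v_2=w\nab v_1+u_2\nab z$. Then
\[
|(w\nab v_1,\nab w)|\le\|\nab v_1\|_{L^\infty}\|w\|_{L^2}\|\nab w\|_{L^2}\le C\|u_1\|_{L^4}\|w\|_{L^2}\|\nab w\|_{L^2},
\]
\[
|(u_2\nab z,\nab w)|\le\|u_2\|_{L^4}\|\nab z\|_{L^4}\|\nab w\|_{L^2}\le C\|u_2\|_{L^4}\|w\|_{L^2}\|\nab w\|_{L^2}.
\]
By Young's inequality, both are at most $\tfrac12\|\nab w\|_{L^2}^2+C(\|u_1\|_{L^4}^2+\|u_2\|_{L^4}^2)\|w\|_{L^2}^2$.
\end{itemize}
Combining these, we obtain
\[
d\|w\|_{L^2}^2+\|\nab w\|_{L^2}^2\,dt\le \rho(t)\|w\|_{L^2}^2\,dt+2(B(u_1)-B(u_2),w)\,dW,
\]
with $\rho:=C\bigl(1+\|u_1\|_{L^4}^2+\|u_2\|_{L^4}^2\bigr)$.

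The random coefficient $\rho$ cannot be moved out of expectations directly, and this is the main obstacle. I would neutralize it with an integrating factor. Set $\Lambda(t):=\exp\bigl(-\int_0^t\rho(s)\,ds\bigr)$, which is well defined and positive a.s. because $\int_0^T\|u_i\|_{L^4}^2\,ds\le C\int_0^T\|u_i\|_{H^1}^2\,ds<\infty$ a.s. By the Itô product rule,
\[
\Lambda(t)\|w(t)\|_{L^2}^2\le\int_0^t2\Lambda(B(u_1)-B(u_2),w)\,dW.
\]
The right-hand side is a local martingale. Localize with stopping times $\tau_N:=\inf\{t:\|w(t)\|_{L^2}\ge N\}\wedge T$; since $\Lambda\le1$, the stopped integral is a true martingale. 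Taking expectations gives $\mE\bigl[\Lambda(t\wedge\tau_N)\|w(t\wedge\tau_N)\|_{L^2}^2\bigr]=0$, hence $w(t\wedge\tau_N)=0$ a.s. Letting $N\to\infty$, and using $\tau_N\to T$ by path continuity, gives $w(t)=0$ a.s. for each $t$.

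Continuity of paths in $L^2(D)$ upgrades this to indistinguishability, since $w$ vanishes simultaneously at all rational $t$ and hence at every $t$. Finally, $z=(I-\Delta)^{-1}w=0$ for all $t$, which yields the statement of Lemma~\ref{Lemma_pathwise_uniqueness}.
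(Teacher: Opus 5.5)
Your proof is correct and follows essentially the paper's route. You subtract the two formulations, split $u_1\nabla v_1-u_2\nabla v_2=w\nabla v_1+u_2\nabla z$, use monotonicity of $u^q$ and the Lipschitz bound, remove the random coefficient with an exponential weight before taking expectations, and recover $z=0$ from the elliptic equation. Your variations are all minor and, if anything, slightly more careful than the paper:
\begin{itemize}
\item you bound $\|\nabla v_1\|_{L^\infty}\le C\|u_1\|_{L^4}$ instead of applying Ladyzhenskaya to $w$;
\item you absorb all constants into the weight instead of using Gronwall;
\item you localize with $\tau_N$ instead of taking expectations of the stochastic integral directly;
\item you invoke path continuity explicitly to get indistinguishability.
\end{itemize}
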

\begin{proof}
	The proof follows the pathwise uniqueness proof of the stochastic BBM equation \cite[Lemma B.6]{nguyen2026finite}. Let
	\[
	U:=u_1-u_2,\qquad V:=v_1-v_2.
	\]
	Subtracting the two equations in \eqref{eq_2.1}, we obtain
	\begin{align}\label{eq_pathuniq_difference}
		(dU,\phi)
		&=-(\nab U,\nab\phi)\,dt
		+\chi\bigl(u_1\nab v_1-u_2\nab v_2,\nab\phi\bigr)\,dt\\\nonumber
		&\qquad+\bigl(g(u_1)-g(u_2),\phi\bigr)\,dt
		+\bigl(B(u_1)-B(u_2),\phi\bigr)\,dW(t),
	\end{align}
	and
	\begin{align}\label{eq_pathuniq_elliptic}
		(\nab V,\nab\psi)+(V,\psi)=(U,\psi),
		\qquad \forall\,\psi\in H^1(D).
	\end{align}
	
	We first estimate the nonlinear terms in \eqref{eq_pathuniq_difference}. Since
	\[
	u_1\nab v_1-u_2\nab v_2
	=U\nab v_1+u_2\nab V,
	\]
	we have
	\begin{align}\label{eq_pathuniq_chem1}
		\left|\bigl(u_1\nab v_1-u_2\nab v_2,\nab U\bigr)\right|
		&\leq \|U\|_{L^4}\|\nab v_1\|_{L^4}\|\nab U\|_{L^2}
		+\|u_2\|_{L^4}\|\nab V\|_{L^4}\|\nab U\|_{L^2}.
	\end{align}
	By the elliptic regularity estimate \eqref{elliptic_estimate}, we have
	\begin{align}
		\|\nab v_1\|_{L^4}&\leq C\|v_1\|_{H^2} \leq C\|u_1\|_{L^2},&
		\|\nab V\|_{L^4}&\leq C\|V\|_{H^2} \leq C\|U\|_{L^2}.
	\end{align}
	Moreover, by the Ladyzhenskaya inequality,
	\[
	\|U\|_{L^4}
	\leq C\|U\|_{L^2}^{1/2}\|U\|_{H^1}^{1/2}.
	\]
	Consequently, using Young's inequality in \eqref{eq_pathuniq_chem1}, we obtain
	\begin{align}\label{eq_pathuniq_chem2}
		\chi\left|\bigl(u_1\nab v_1-u_2\nab v_2,\nab U\bigr)\right|
		&\leq \frac12\|\nab U\|_{L^2}^2
		+C^*\left(1+\|u_1\|_{L^2}^4+\|u_2\|_{L^4}^2\right)\|U\|_{L^2}^2,
	\end{align}
	for some constant $C^*>0$.
	
	Next, since $q\geq3$ is odd, the function $r\mapsto r^q$ is monotone increasing. Therefore,
	\begin{align}\label{eq_pathuniq_g}
		\bigl(g(u_1)-g(u_2),U\bigr)
		&=\|U\|_{L^2}^2-\bigl(u_1^q-u_2^q,u_1-u_2\bigr)
		\leq\|U\|_{L^2}^2.
	\end{align}
	Furthermore, by \eqref{Assump_Lipschitz} with $m=0$,
	\begin{align}\label{eq_pathuniq_B}
		\|B(u_1)-B(u_2)\|_{L^2}^2
		\leq C_B^2\|U\|_{L^2}^2.
	\end{align}
	
	We now introduce the weight
	\begin{align}\label{eq_pathuniq_weight}
		\rho(t)
		:=\exp\left\{-C^*\int_0^t
		\left(1+\|u_1(s)\|_{L^2}^4+\|u_2(s)\|_{L^4}^2\right)\,ds\right\},
	\end{align}
	where $C>0$ will be chosen sufficiently large. Notice that $\rho(t)>0$ for all $t\in[0,T]$, $\mP$-a.s., since
	\[
	u_1\in C([0,T];L^2(D)),\qquad
	u_2\in L^2(0,T;H^1(D))\hookrightarrow L^2(0,T;L^4(D)),
	\]
	$\mP$-a.s.
	
	Applying It\^o's formula to $\rho(t)\|U(t)\|_{L^2}^2$, we obtain
	\begin{align}\label{eq_pathuniq_Ito}
		d\left(\rho\|U\|_{L^2}^2\right)
		&=-C^*\rho\left(1+\|u_1\|_{L^2}^4+\|u_2\|_{L^4}^2\right)\|U\|_{L^2}^2\,dt
		-2\rho\|\nab U\|_{L^2}^2\,dt\\\nonumber
		&\qquad+2\chi\rho\bigl(u_1\nab v_1-u_2\nab v_2,\nab U\bigr)\,dt\\\nonumber
		&\qquad+2\rho\bigl(g(u_1)-g(u_2),U\bigr)\,dt
		+\rho\|B(u_1)-B(u_2)\|_{L^2}^2\,dt\\\nonumber
		&\qquad+2\rho\bigl(B(u_1)-B(u_2),U\bigr)\,dW(t).
	\end{align}
	Using \eqref{eq_pathuniq_chem2}--\eqref{eq_pathuniq_B} in \eqref{eq_pathuniq_Ito}, we obtain
	\begin{align}\label{eq_pathuniq_final}
		d\left(\rho(t)\|U(t)\|_{L^2}^2\right)
		+\rho(t)\|\nab U(t)\|_{L^2}^2\,dt
		&\leq (2+C_B^2)\rho(t) \|U(t)\|^2_{L^2}\, dt\\\nonumber
		&\qquad+
		2\rho(t)\bigl(B(u_1(t))-B(u_2(t)),U(t)\bigr)\,dW(t).
	\end{align}
	
	 Taking expectations in \eqref{eq_pathuniq_final}, the stochastic integral has zero expectation, and hence
	\begin{align}
		\mE\left[\rho(t)\|U(t)\|_{L^2}^2\right]
		+\mE\left[\int_0^t\rho(s)\|\nab U(s)\|_{L^2}^2\,ds\right]
		&\leq \mE\left[\|U(0)\|_{L^2}^2\right] \\\nonumber
		&\qquad+ (2+C_B^2)\int_{0}^t\mE\left[\rho(s) \|U(s)\|^2_{L^2}\right]\, ds.
	\end{align}
	
	Using the Gronwall inequality, we obtain
	\begin{align}
	\mE\left[\rho(t)\|U(t)\|_{L^2}^2\right]
	+\mE\left[\int_0^t\rho(s)\|\nab U(s)\|_{L^2}^2\,ds\right]
	&\leq \mE\left[\|U(0)\|_{L^2}^2\right]e^{(2+C_B^2)T}.
	\end{align}
	
	Since $u_1(0)=u_2(0)=u_0$, we have $U(0)=0$, $\mP$-a.s., and therefore
	\begin{align}
		\mE\left[\rho(t)\|U(t)\|_{L^2}^2\right]=0,
		\qquad t\in[0,T].
	\end{align}
	Since $\rho(t)>0$, it follows that
	\[
	U(t)=0,\qquad \mP\text{-a.s.},
	\]
	for every $t\in[0,T]$, which implies that
	\begin{align}
		\mP\left(u_1(t)=u_2(t)\text{ for all }t\in[0,T]\right)=1.
	\end{align}
	
	Finally, since $U=0$, equation \eqref{eq_pathuniq_elliptic} gives
	\[
	(\nab V,\nab\psi)+(V,\psi)=0,
	\qquad\forall\,\psi\in H^1(D).
	\]
	Taking $\psi=V$, we obtain
	\[
	\|\nab V\|_{L^2}^2+\|V\|_{L^2}^2=0,
	\]
	and hence $V=0$. Therefore,
	\[
	v_1(t)=v_2(t),\qquad t\in[0,T],\quad\mP\text{-a.s.}
	\]
	This proves the pathwise uniqueness of \eqref{eq_2.1}.
\end{proof}
	
	Combining Lemmas~\ref{Lemma_martingale_solution} and~\ref{Lemma_pathwise_uniqueness}, we obtain the existence of a unique probabilistically strong variational solution $(u,v)$ to \eqref{eq_2.1}, which proves Theorem~\ref{Thm_wellposedness}.
	
	
	\bibliographystyle{abbrv}
	\bibliography{references}

@article{chen2025well,
	title={Well-posedness of stochastic chemotaxis system},
	author={Chen, Yunfeng and Zhai, Jianliang and Zhang, Tusheng},
	journal={Journal of Differential Equations},
	volume={442},
	pages={113531},
	year={2025},
	publisher={Elsevier}
}

@article{zhang2016characteristic,
	title={{Characteristic splitting mixed finite element analysis of Keller--Segel chemotaxis models}},
	author={Zhang, Jiansong and Zhu, Jiang and Zhang, Rongpei},
	journal={Applied Mathematics and Computation},
	volume={278},
	pages={33--44},
	year={2016},
	publisher={Elsevier}
}

@article{misiats2022global,
	title={{On global existence and blowup of solutions of stochastic Keller--Segel type equation}},
	author={Misiats, Oleksandr and Stanzhytskyi, Oleksandr and Topaloglu, Ihsan},
	journal={Nonlinear Differential Equations and Applications NoDEA},
	volume={29},
	number={1},
	pages={3},
	year={2022},
	publisher={Springer}
}

@article{vo2025analysis,
	title={{Analysis of fully discrete Crank-Nicolson finite element methods for a stochastic Keller-Segel chemotaxis system with gradient-type multiplicative noise}},
	author={Vo, Liet},
	journal={arXiv preprint arXiv:2507.15103},
	year={2025}
}

@article{feng2024high,
	title={{High moment and pathwise error estimates for fully discrete mixed finite element approximattions of stochastic Navier-Stokes equations with additive noise}},
	author={Feng, Xiaobing and Vo, Liet},
	journal={Communications in Computational Physics},
	volume={36},
	number={3},
	pages={821-849},
	year={2024},
	publisher={Global Science Press}
}

@article{nguyen2026finite,
	title={{Finite element approximations of the stochastic Benjamin-Bona-Mahony equation with multiplicative noise}},
	author={Nguyen, Hung D and Thieu, Thoa and Vo, Liet},
	journal={arXiv preprint arXiv:2603.08496},
	year={2026}
}

@article{nguyen2026fully,
	title={{Fully discrete finite element methods for the stochastic Kuramoto--Sivashinsky equation with multiplicative noise}},
	author={Nguyen, Hung D and Vo, Liet},
	journal={Stochastics and Partial Differential Equations: Analysis and Computations},
	pages={1--63},
	year={2026},
	publisher={Springer}
}

@article{keller1970initiation,
	title={Initiation of slime mold aggregation viewed as an instability},
	author={Keller, Evelyn F. and Segel, Lee A.},
	journal={Journal of Theoretical Biology},
	volume={26},
	number={3},
	pages={399--415},
	year={1970}
}

@article{keller1971model,
	title={Model for chemotaxis},
	author={Keller, Evelyn F and Segel, Lee A},
	journal={Journal of theoretical biology},
	volume={30},
	number={2},
	pages={225--234},
	year={1971},
	publisher={Elsevier}
}

@article{mimura1996aggregating,
	title={Aggregating pattern dynamics in a chemotaxis model including growth},
	author={Mimura, Masayasu and Tsujikawa, Tohru},
	journal={Physica A: Statistical Mechanics and its Applications},
	volume={230},
	number={3-4},
	pages={499--543},
	year={1996},
	publisher={Elsevier}
}

@article{tello2007chemotaxis,
	title={A chemotaxis system with logistic source},
	author={Tello, J Ignacio and Winkler, Michael},
	journal={Communications in Partial Differential Equations},
	volume={32},
	number={6},
	pages={849--877},
	year={2007},
	publisher={Taylor \& Francis}
}

@article{lankeit2017generalized,
	title={{A generalized solution concept for the Keller--Segel system with logarithmic sensitivity: global solvability for large nonradial data}},
	author={Lankeit, Johannes and Winkler, Michael},
	journal={Nonlinear Differential Equations and Applications NoDEA},
	volume={24},
	number={4},
	pages={49},
	year={2017},
	publisher={Springer}
}

@article{cao2014boundedness,
	title={{Boundedness in a quasilinear parabolic--parabolic Keller--Segel system with logistic source}},
	author={Cao, Xinru},
	journal={Journal of Mathematical Analysis and Applications},
	volume={412},
	number={1},
	pages={181--188},
	year={2014},
	publisher={Elsevier}
}

@article{huang2021microscopic,
	title={{The microscopic derivation and well-posedness of the stochastic Keller--Segel equation}},
	author={Huang, Hui and Qiu, Jinniao},
	journal={Journal of nonlinear science},
	volume={31},
	number={1},
	pages={6},
	year={2021},
	publisher={Springer}
}

@article{mukherjee2026uniqueness,
	title={{Uniqueness of the stochastic Keller--Segel model in one dimension}},
	author={Mukherjee, Debopriya and Hausenblas, Erika and Tran, Thanh},
	journal={Stochastic Processes and their Applications},
	volume={200},
	pages={104999},
	year={2026},
	publisher={Elsevier}
}

@article{carelli2012rates,
	title={{Rates of convergence for discretizations of the stochastic incompressible Navier--Stokes equations}},
	author={Carelli, Erich and Prohl, Andreas},
	journal={SIAM Journal on Numerical Analysis},
	volume={50},
	number={5},
	pages={2467--2496},
	year={2012},
	publisher={SIAM}
}

@article{lord2014introduction,
	title={{An introduction to computational stochastic PDEs}},
	author={Lord, Gabriel J and Powell, Catherine E and Shardlow, Tony},
	journal={Cambridge Texts Appl. Math},
	volume={50},
	year={2014}
}

@article{thieu2026splitting,
	title={A splitting mixed finite element method for a stochastic {Keller-Segel} system with multiplicative noise},
	author={Thieu, Thoa and Vo, Liet},
	journal={arXiv preprint arXiv:2608.07798},
	year={2026}
}

@article{kruse2018discrete,
	title={A discrete stochastic {Gronwall} lemma},
	author={Kruse, Raphael and Scheutzow, Michael},
	journal={Mathematics and Computers in Simulation},
	volume={143},
	pages={149--157},
	year={2018},
	publisher={Elsevier}
}

@article{li2026martingale,
	title={Martingale Solutions to a Stochastic {Keller-Segel} System with nonlocal Source and Super-linear Noise},
	author={Li, Qian and Chen, Li and Wang, Jinhuan},
	journal={arXiv preprint arXiv:2606.11774},
	year={2026}
}

@article{feng2025full,
	title={Full moment error estimates in strong norms for numerical approximations of stochastic {Navier-Stokes} equations with multiplicative noise, {Part I}: time discretization},
	author={Feng, Xiaobing and Vo, Liet},
	journal={arXiv preprint arXiv:2510.08291},
	year={2025}
}

@article{vo2025high,
	title={High moment and pathwise error estimates for fully discrete mixed finite element approximations of the stochastic {Stokes} equations with multiplicative noise},
	author={Vo, Liet},
	journal={ESAIM: Mathematical Modelling and Numerical Analysis},
	volume={59},
	number={3},
	pages={1301--1331},
	year={2025},
	publisher={EDP Sciences}
}

@article{brzezniak2013finite,
	title={Finite-element-based discretizations of the incompressible {Navier--Stokes} equations with multiplicative random forcing},
	author={Brze{\'z}niak, Zdzislaw and Carelli, Erich and Prohl, Andreas},
	journal={IMA Journal of Numerical Analysis},
	volume={33},
	number={3},
	pages={771--824},
	year={2013},
	publisher={Oxford University Press}
}

@article{sherratt1994chemotaxis,
	title={Chemotaxis and chemokinesis in eukaryotic cells: the {Keller-Segel} equations as an approximation to a detailed model},
	author={Sherratt, Jonathan A},
	journal={Bulletin of mathematical biology},
	volume={56},
	number={1},
	pages={129--146},
	year={1994},
	publisher={Springer}
}

@article{arumugam2021keller,
	title={Keller-Segel chemotaxis models: a review},
	author={Arumugam, Gurusamy and Tyagi, Jagmohan},
	journal={Acta Applicandae Mathematicae},
	volume={171},
	number={1},
	pages={6},
	year={2021},
	publisher={Springer}
}

@article{herrero1996chemotactic,
	title={Chemotactic collapse for the {Keller-Segel} model},
	author={Herrero, Miguel A and Vel{\'a}zquez, Juan JL},
	journal={Journal of Mathematical Biology},
	volume={35},
	number={2},
	pages={177--194},
	year={1996},
	publisher={Springer}
}

\end{document}